\documentclass[11pt]{article}

\usepackage[dvipsnames]{xcolor}

\usepackage[margin={2.5cm,2.2cm}]{geometry} 

\usepackage{graphicx}
\usepackage{float}
\usepackage{subcaption}

\usepackage{tikz}

\usepackage{enumerate}
\usepackage{amsmath,amsthm,amsfonts,bm}
\usepackage{multirow}

\usepackage[nocompress]{cite}

\theoremstyle{plain}
\newtheorem{theorem}{Theorem}[section]
\newtheorem{proposition}[theorem]{Proposition}

\newtheorem{corollary}[theorem]{Corollary}
\newtheorem{remark}[theorem]{Remark}

\theoremstyle{definition}

\newtheorem{hyp}{Hypothesis}

\usepackage{mathtools}
\newcommand*{\defeq}{\mathrel{\vcenter{\baselineskip0.5ex \lineskiplimit0pt
			\hbox{\scriptsize.}\hbox{\scriptsize.}}}%
	=}
\newcommand{\R}{\mathbb{R}}

\newcommand{\bv}{\overline{v}}
\newcommand{\bu}{\overline{u}}
\newcommand{\bmu}{\overline{\mu}}
\newcommand{\bn}{\overline{n}}

\newcommand{\nn}{\mathbf{n}}

\newcommand{\vK}{v_{K}}

\newcommand{\vL}{v_{L}}

\newcommand{\uKs}{u_{K^*}}
\newcommand{\uL}{u_{L}}
\newcommand{\up}{\Pi_1^h u}

\newcommand{\nKs}{n_{K^*}}

\newcommand{\muK}{\mu_{K}}
\newcommand{\muL}{\mu_{L}}
\newcommand{\mup}{\Pi_0\mu}

\newcommand{\nabland}{\nabla_{\nn_e}^0}

\usepackage{hyperref}

\def\escalar#1#2{\left(#1,#2\right)}

\def\escalarL#1#2{\escalar{#1}{#2}}
\def\escalarLd#1#2{\escalar{#1}{#2}}
\def\escalarML#1#2{\escalar{#1}{#2}_h}
\def\dualH#1#2{\langle#1,#2\rangle}

\def\norma#1{\left\|#1\right\|}
\def\normaL#1{\norma{#1}_{L^2(\Omega)}}

\def\salto#1{\left[\!\left[#1\right]\!\right]}
\def\media#1{\left\{\!\!\left\{#1\right\}\!\!\right\}}

\def\T{\mathcal{T}}
\def\E{\mathcal{E}}

\def\N{\mathbb{N}}
\def\P{\mathbb{P}}
\def\X{\mathcal{X}}

\def\Ehi{\mathcal{E}_h^{\text{i}}}

\def\Pd{\mathbb{P}^{\text{disc}}}
\def\Pc{\mathbb{P}^{\text{cont}}}

\def\aupw#1#2#3{a_h^{\text{upw}}(#1;#2,#3)}

\title{\textbf{
A structure-preserving upwind DG scheme for a degenerate phase-field tumor model}}

\makeatletter
\def\@fnsymbol#1{\ensuremath{\ifcase#1\or \dagger\or \ddagger\or \mathsection\or * \or\mathparagraph\or *\or **\or \dagger\dagger \or \ddagger\ddagger \else\@ctrerr\fi}}
\makeatother

\author{Daniel Acosta-Soba\thanks{Departamento de Matemáticas, Universidad de Cádiz, Puerto Real, 11510 Cádiz, Spain -- Email: \texttt{daniel.acosta@uca.es}} \thanks{Department of Mathematics, University of Tennessee at Chattanooga, Chattanooga, TN 37403, USA}~,
~Francisco Guillén-González\thanks{Departamento de Ecuaciones Diferenciales y Análisis Numérico \& IMUS, Universidad de Sevilla, 41012 Seville, Spain -- Email: \texttt{guillen@us.es}}~,
~J. Rafael Rodríguez-Galván\thanks{Departamento de Matemáticas, Universidad de Cádiz, Puerto Real, 11510 Cádiz, Spain -- Email: \texttt{rafael.rodriguez@uca.es} -- Corresponding author}}

\begin{document}

\bibliographystyle{abbrv} 

\maketitle

\begin{abstract}

In this work, we present a modification of the phase-field tumor growth model given in \cite{hawkins-daarud_numerical_2012} that leads to bounded, more physically meaningful, volume fraction variables. In addition, we develop an upwind discontinuous Galerkin (DG) scheme preserving the mass conservation, pointwise bounds and energy stability of the continuous model. Finally, some computational tests in accordance with the theoretical results are introduced. In the first test, we  compare our DG scheme with the finite element (FE) scheme related to the same time approximation. The DG scheme shows a well-behavior even for strong cross-diffusion effects  in contrast with FE where numerical spurious oscillations appear. Moreover, the second test exhibits the behavior of the tumor-growth model under different choices of parameters and also of mobility and proliferation functions.

\end{abstract}

\paragraph{Keywords:}  Degenerate Cahn-Hilliard.  Degenerate proliferation. Cross-diffusion.  Discrete pointwise bounds. Discrete energy stability.

\section{Introduction}

Lately, significant work on the mathematical modeling of tumor growth has been carried out. As a result, many different models have arisen, some of which have even been applied to predict the response of the tumor to its surrounding environment and possible medical treatments. Most of these models can be classified into micro-scale discrete models, macro-scale continuum models or hybrid models, \cite{lowengrub_et_al,cristini_lowengrub}. Regarding the continuum models, different approaches has been developed among which we can find models using both ODE, for instance, \cite{chulian2022mathematical,mcdougall}, and PDE, for example, \cite{fernandez2022glioblastoma,roose2007mathematical}.

In this sense, phase field models such as the Cahn-Hilliard (CH) equation have become a very popular tool. This model describes the evolution of a thin, diffuse, interface between two different phases or states of a process \cite{cahn_hilliard,novick_cohen_cahn_hilliard} through a so-called phase-filed variable, which minimizes an adequate free energy. Sometimes, this CH model is coupled with a degenerate mobility to impose phase-related pointwise bounds on this variable.

In particular, in the context of tumor modeling, the phase-field variable $u$ is usually interpreted as a tumor volume-fraction (with $0\le u \le 1$) and this model is coupled with other equations describing the interaction between the tumor and the surrounding environment. Some examples of these tumor models can be found in \cite{aristotelous,frieboes,wise,hawkins-daarud_numerical_2012,signoriunderstanding,garcke2022viscoelastic,fritz2023tumor} and the references therein. Often, certain physical properties, inherited from the Cahn-Hilliard equation, are inherent to the solution of these models such as mass-conservation of a biological substance, pointwise bounds on some of the variables 
and some sort of energy-dissipation.

In this work, we consider the model \eqref{problema:original_model}  carefully derived from mixture theory by Hawkins-Daarud et al. in \cite{hawkins-daarud_numerical_2012}, which describes the interaction between a tumor and the nutrients in the extracellular water. To this aim, the CH equation for $u$ is coupled with a diffusion equation for the nutrients $n$ by means of  some reaction and cross-diffusion terms. Although this model does not take into account some of the complex processes involved in the surrounding environment of the tumor, it allowed the authors to capture some irregular growth patterns that are typically associated with these processes. However, while this model is mass-conservative and energy-dissipative, it does not implicitly impose the necessary pointwise bounds on the volume fraction variables.

Therefore, we propose a modification of the aforementioned tumor model, see \eqref{problema:van_der_zee_orden_2} below, in accordance with its physical interpretation. As a result of this modification, we obtain pointwise bounds on the tumor and the nutrient volume fractions ($0\le u,n\le 1$) which are consistent with the physical meaning of the variables. This modification may help to a future application of this model (or a variant of it) for real tumor growth prediction.

This phase-field tumor model \eqref{problema:van_der_zee_orden_2} consist of a system of coupled nonlinear equations where reaction and cross diffusion effects appear. Thus, dealing with this model is really challenging both from a theoretical and the computational point of view.

In the case of the Cahn-Hilliard equation itself, several advances have been published regarding the existence and regularity of solution, most of which can be found in \cite{miranville2019cahn} and the references therein. Also, one can find several results regarding the existence, regularity and long-time behavior of the solution of the tumor model \eqref{problema:original_model} and variants in the literature in the case without cross-diffusion, see \cite{colli2014cahn,colli2015vanishing,colli2017asymptotic,frigeri2015diffuse}. Recently, the well-posedness and the long-time behavior of the model \eqref{problema:original_model} with cross-diffusion have been addressed in the work by H. Garcke and S. Yayla, \cite{garcke2020long}.

Regarding the numerical approximation of these equations, significant advances have been done both with respect to the time and the spatial discretizations.

On the one hand, the classical approach for the time discretization of the phase-field models is the convex-splitting decomposition introduced in \cite{eyre_1998_unconditionally} which preserves the energy stability. Nonetheless, other time-discrete schemes have been introduced in the literature (see, for instance, \cite{guillen-gonzalez_linear_2013,GUILLENGONZALEZ2014821,tierra2015numerical}). Among these time approximations we find the idea of introducing a Lagrange multiplier in the potential term in \cite{badia2011finite} which was extended in \cite{guillen-gonzalez_linear_2013,GUILLENGONZALEZ2014821,tierra2015numerical}. This idea led to the popular energy quadratization (EQ) schemes \cite{yang2016linear,yang2017efficient,yang2017numerical}, later extended to the scalar auxiliary variable (SAV) approach \cite{shen2018scalar}.

On the other hand, in the case of the Cahn-Hilliard equation with degenerate mobility, designing a suitable spatial discretization consistent with the physical properties of the model, specially the pointwise bounds, is a difficult task and only a few works have been published in this regard. Among the currently available structure-preserving schemes we can find some schemes based on finite volumes, \cite{bailo-carrillo_Cahn-Hilliard-FV_2021,huang2023structurepreserving}, and on finite elements, \cite{guillen-gonzalez_linear_2013}. Moreover, the authors have published a recent work, \cite{acosta-soba_CH_2022}, where the pointwise bounds of the CH model in the case with convection are preserved using an upwind discontinuous Galerkin (DG) approximation. To the best knowledge of the authors, no previous work has been published defining a fully discrete DG  scheme preserving the mass conservation, pointwise bounds and energy stability of the CH model with degenerate mobility.

The difficulties of the discretization are emphasized in the case of phase-field tumor models. In particular, in \cite{hawkins-daarud_numerical_2012} an energy-stable finite element scheme with a first-order convex-splitting scheme in time is proposed for \eqref{problema:original_model} and extended in \cite{wu_stabilized_2014} to a second-order time discretization. Other types of approximations of this model \eqref{problema:original_model} using meshless collocation methods, \cite{dehghan2017comparison}, stabilized element-free Galerkin method, \cite{mohammadi2019simulation}, and SAV Fourier-spectral method, \cite{shen2022sav}, can be found in the literature. However, no bounds are imposed on the discrete variables whatsoever.

In this sense, we introduce a well-suited convex-splitting DG scheme of the proposed model \eqref{problema:van_der_zee_orden_2}, based on the previous works \cite{acosta-soba_CH_2022,acosta-soba_KS_2022,mazen_saad_2014}. 
 This approximation preserves the physical properties of the phase-field tumor model (mass conservation, pointwise bounds and energy stability) and prevent numerical spurious oscillations. This scheme can be applied, in particular, to the more simple CH model with degenerate mobility itself preserving all of the aforementioned properties.

This paper is organized as follows: in Section \ref{sec:tumor_model} we discuss the tumor model \eqref{problema:original_model}, which was derived in \cite{hawkins-daarud_numerical_2012}, and we introduce our modified version of this model, \eqref{problema:van_der_zee_orden_2}, showing its physical properties. In Section \ref{sec:numerical_approx} we develop our numerical approximation of the tumor model \eqref{problema:van_der_zee_orden_2}. We introduce the convex-splitting time-discrete scheme \eqref{esquema:time_discrete} in Subsection \ref{sec:time_discrete}. Moreover, we present the DG space approximation, \eqref{esquema_DG_upw_Eyre_van_der_Zee}, in Subsection \ref{sec:fully_discrete_scheme}, defining the upwind form \eqref{def:aupw} in Subsection \ref{sec:def_aupw}. Then, we analyze the properties of the fully discrete scheme in Subsection \ref{sec:properties_DG_scheme}. Finally, we compute a couple of numerical experiments in Section \ref{sec:numer-experiments}. Specifically, in Subsection \eqref{sec:numer-experiments_1}, we present a numerical comparison between the robust DG scheme \eqref{esquema_DG_upw_Eyre_van_der_Zee} and a FE discretization of \eqref{esquema:time_discrete}, the latter of which fails in the case of strong cross-diffusion. In Subsection \ref{sec:numer-experiments_2}, we show the behavior of the model \eqref{problema:van_der_zee_orden_2} under different choices of parameters and mobility/proliferation functions.

\section{Modified tumor model}
\label{sec:tumor_model}

Let $\Omega\subset \R^d$ be a bounded polygonal domain and $T>0$ the final time.

The following tumor-growth model was introduced in \cite{hawkins-daarud_numerical_2012} and further studied in \cite{wu_stabilized_2014}:
\begin{subequations}
	\label{problema:original_model}
	\begin{align}
		\label{eq:original_model_1}
		\partial_t u&=\nabla\cdot\left(M_u\nabla\mu_u\right)+\delta P(u)(\mu_n-\mu_u) \quad&\text{in }\Omega\times (0,T),\\
		\label{eq:original_model_2}
		\mu_u&=F'(u)-\varepsilon^2\Delta u-\chi_0 n\quad&\text{in }\Omega\times (0,T),\\
		\label{eq:original_model_3}
		\partial_t n&=\nabla\cdot\left(M_n\nabla\mu_n\right)-\delta P(u) (\mu_n-\mu_u) \quad&\text{in }\Omega\times (0,T),\\
		\nabla u\cdot \mathbf{n}&=\left( M_n\nabla \mu_n\right)\cdot \mathbf{n}=\left( M_u\nabla \mu_u\right)\cdot \mathbf{n}=0 \quad &\text{on }\partial\Omega\times (0,T),\\
		u(0)&=u_0,\quad n(0)=n_0\quad&\text{in }\Omega,
	\end{align}
\end{subequations}
where
\begin{equation}
	\label{eq:original_model_4}
	\mu_n=\frac{1}{\delta} n -\chi_0 u \quad\text{in }\Omega\times (0,T)
\end{equation}
 and $u_0,n_0\in L^2(\Omega)$. In this model, $u$ and $n$ represent the tumor cells and the nutrient-rich extracellular water volume fractions, respectively. Therefore, these variables are assumed to be bounded in $[0,1]$. Moreover, $\mu_u$ and $\mu_n$ are the (chemical) potentials of $u$ and $n$, respectively.

The behavior of the cells is modeled using a Cahn-Hilliard equation, where 
$$F(u)=\frac{1}{4}u^2(1-u)^2$$ is the Ginzburg-Landau double well potential and $M_u$ is the mobility of the tumor, which is taken either as constant or degenerated at $u=0$, for instance, $M_u(u)=\widehat M u^2$ with $\widehat M>0$. The parameter $\varepsilon\ge 0$ is related to the thickness of the interface between the tumor phases $u=1$ (fully saturated) and $u=0$ (fully unsaturated).

On the other hand, the nutrients are modeled using a diffusion equation where the function $M_n$ is the mobility of the nutrients, which is taken as constant in practice.

These equations are coupled by cross diffusion terms (multiplied by the coefficient $\chi_0\ge 0$) introduced in \eqref{eq:original_model_2} and \eqref{eq:original_model_4} that model the attraction between tumor cells and nutrients. In addition, reaction terms modeling the consumption of nutrients by the tumor cells appear in \eqref{eq:original_model_1} and \eqref{eq:original_model_3}, where $P(u)$ is a proliferation term that vanishes when $u\le0$ in \cite{hawkins-daarud_numerical_2012} or when $u\not\in(0,1)$ in \cite{wu_stabilized_2014}. These reaction terms depend on the difference between the potentials, which is assumed to be positive as the parameter $\delta>0$ is very small, because one has the approximation
$$
\delta P(u)(\mu_n-\mu_u) =P(u)(n-\delta(\chi_0 u-\mu_u)) \approx P(u)n
\quad  \hbox{if $\delta \approx  0$.}
$$

The well-posedness and long-time behavior of the model \eqref{problema:original_model} and some variants have been considered in the case without cross-diffusion ($\chi_0=0$), see \cite{frigeri2015diffuse,colli2014cahn,colli2015vanishing,colli2017asymptotic}, and only recently in the case with cross diffusion ($\chi_0>0$) in \cite{garcke2020long}.

In this work, taking into account the previous considerations, we introduce the following modified phase field tumor model
\begin{subequations}
	\label{problema:van_der_zee_orden_2}
	\begin{align}
		\label{eq:van_der_zee_orden_2_1}
		\partial_t u&=C_u\nabla\cdot\left(M(u)\nabla\mu_u\right)+\delta P_0P(u,n)(\mu_n-\mu_u)_\oplus \quad&\text{in }\Omega\times (0,T),\\
		\label{eq:van_der_zee_orden_2_2}
		\mu_u&=F'(u)-\varepsilon^2\Delta u-\chi_0 n\quad&\text{in }\Omega\times (0,T),\\
		\label{eq:van_der_zee_orden_2_3}
		\partial_t n&=C_n\nabla\cdot\left(M(n)\nabla\mu_n\right)-\delta P_0P(u,n)(\mu_n-\mu_u)_\oplus \quad&\text{in }\Omega\times (0,T),\\
		\nabla u\cdot \mathbf{n}&=\left( M(n)\nabla \mu_n\right)\cdot \mathbf{n}=\left( M(u)\nabla \mu_u\right)\cdot \mathbf{n}=0 \quad &\text{on }\partial\Omega\times (0,T),\\
		u(0)&=u_0,\quad n(0)=n_0\quad&\text{in }\Omega,
	\end{align}
\end{subequations}
where
\begin{equation}
	\label{eq:van_der_zee_orden_2_4}
	\mu_n=\frac{1}{\delta} n -\chi_0 u \quad\text{in }\Omega\times (0,T),
\end{equation}
$u_0,n_0\in L^2(\Omega)$, $F(u)=\frac{1}{4}u^2(1-u)^2$ and all the parameters above are nonnegative with $\delta, C_u, C_n>0$ and $\varepsilon, \chi_0, P_0\ge 0$. Also, we have introduced the following notation for the positive and negative parts of a scalar function $v$: 
$$
v_\oplus \coloneqq\frac{|v|+v}{2}=\max\{v,0\},
\quad
v_\ominus \coloneqq\frac{|v|-v}{2}=-\min\{v,0\},
\quad
v=v_\oplus  - v_\ominus \, .
$$
Moreover, we define the following family of degenerate mobilities
\begin{equation}
	\label{def:mobility}
	M(v)\defeq h_{p,q}(v),
\end{equation}
for certain $p,q\in\N$ where
$$
h_{p,q}(v)\defeq K_{p,q}v_\oplus^p(1-v)_\oplus^q=
\begin{cases}
	K_{p,q}v^p(1-v)^q, & v\in[0,1],\\
	0, &\text{elsewhere},
\end{cases}
$$
with $K_{p,q}>0$ a constant so that $\max_{x\in\R}h_{p,q}(v)=1$, hence $M(v)$ is a degenerate and normalized mobility. In addition, we define the proliferation function depending on both cells and nutrients as
\begin{equation}
	\label{def:proliferation}
	P(u,n)\defeq h_{r,s}(u)n_\oplus,
\end{equation}
for certain $r,s\in\N$.

Notice that the mobility functions, defined in \eqref{def:mobility}, for the tumor and for the nutrients do not necessarily need to be identical. One may consider the tumor mobility as $M_u(u)=h_{p,q}(u)$ with $p, q\in\N$ and the nutrients mobility as $M_n(n)=h_{p',q'}(n)$ with $p', q'\in\N$ and all the results below equally hold. However, for simplicity, we will assume that $M_u=M_n$ and denote the mobility function as $M$.

\begin{remark}
	\label{rmk:modifications_model}
	This model introduces several changes with respect to the previous model \eqref{problema:original_model} studied in \cite{hawkins-daarud_numerical_2012,wu_stabilized_2014}. 
  	These modifications, described next, involve significant improvements. But also they lead to a more complex model, with degenerate mobility and proliferation functions and major difficulties regarding the analysis of the existence, regularity and long time behavior of solutions.

  	Specifically:
	\begin{itemize}
		\item The difference between the potentials, $\mu_n-\mu_u$ is assumed to be positive since $\delta$ is set to be a very small parameter. This difference could possible be negative in the regions where $n\simeq 0$ but, in this case, the reaction terms vanish due to the proliferation function $P(u,n)$ defined in \eqref{def:proliferation}. Therefore, the positive part of $(\mu_n-\mu_u)$ is taken in \eqref{eq:van_der_zee_orden_2_1} and \eqref{eq:van_der_zee_orden_2_3}.
		\item  When  $\delta\to 0$, the reaction terms in equations \eqref{eq:van_der_zee_orden_2_1} and \eqref{eq:van_der_zee_orden_2_3} are assumed to grow with the square of the nutrients volume fraction. In fact
			$$
		\delta P_0 P(u,n)(\mu_n-\mu_u)_\oplus =P_0 P(u,n)(n-\delta(\chi_0 u-\mu_u))_\oplus \simeq P_0P(u,n)n= P_0 h_{r,s}(u)(n_\oplus)^2.
		$$
		\item A degenerate mobility, \eqref{def:mobility}, is considered for both the phase-field function $u$ and the volume fraction of nutrients $n$.
    \item The aforementioned modifications imply that $u$ and $n$ must be bounded in the interval $[0,1]$ (see Theorem \ref{prop:maximum_principle}), what matches the physical assumptions of the model since $u$ and $n$ are assumed to be volume fractions. This is a clear improvement over previous approaches, such us the ones considered in \cite{hawkins-daarud_numerical_2012,wu_stabilized_2014}, where the solution does not necessarily satisfy these bounds.
	\end{itemize}
\end{remark}

\begin{remark}
	In practice, $C_n=\delta D$ with $D>0$ so that, when $\delta\to0$, the $n$-equation is approached by 
	$$
	\partial_t n\approx D\nabla\cdot\left(M(n)\nabla n\right)-P_0P(u,n)n.
	$$
\end{remark}

Considering that $\mu_n$ is explicitly determined by \eqref{eq:van_der_zee_orden_2_4}, we can reduce the number of unknowns and define the weak formulation of \eqref{problema:van_der_zee_orden_2} as: find $(u,\mu_u,n)$ such that $u, n\in L^2(0,T;H^1(\Omega))$, $\partial_t u,\partial_t n\in L^2(0,T;H^1(\Omega)')$ and $\mu_u\in L^2(0,T; H^1(\Omega))$,
which satisfies the following variational problem a.e. $t\in(0,T)$
\begin{subequations}
	\label{problema:van_der_zee_form_var_def}
	\begin{align}
		\label{eq:van_der_zee_form_var_def_1}
		\dualH{\partial_t u(t)}{\overline{u}}&=-C_u\escalarLd{M(u(t))\nabla\mu_u(t)}{\nabla\overline{u}}\notag\\&\quad+\delta P_0 \escalarL{P(u(t),n(t))(\mu_n(t)-\mu_u(t))_\oplus }{\overline{u}},&&\forall\overline{u}\in H^1(\Omega),\\
		\label{eq:van_der_zee_form_var_def_2}
		\escalarL{\mu_u(t)}{\overline{\mu}_u}&=\varepsilon^2 \escalarLd{\nabla u(t)}{\nabla\overline{\mu}_u}+\escalarL{F'(u(t))-\chi_0 n(t)}{\overline{\mu}_u},&&\forall\overline{\mu}_u\in H^1(\Omega),\\
		\label{eq:van_der_zee_form_var_def_3}
		\dualH{\partial_t n(t)}{\overline{n}}&=-C_n\escalarLd{M(n(t))\nabla\mu_n(t)}{\nabla\overline{n}}\notag\\&\quad-\delta P_0 \escalarL{P(u(t),n(t))(\mu_n(t)-\mu_u(t))_\oplus }{\overline{n}},&&\forall\overline{n}\in H^1(\Omega),
	\end{align}
\end{subequations}
where 
\begin{equation}
	\label{eq:van_der_zee_form_var_def_4}
	\mu_n(t)=\frac{1}{\delta} n(t) -\chi_0 u(t),
\end{equation}
$u(0)=u_0$, $n(0)=n_0$ and $\escalarL{\cdot}{\cdot}$, $\dualH{\cdot}{\cdot}$ denote the usual scalar product in $L^2(\Omega)$ and the dual product over $H^1(\Omega)$, respectively.

Since $u, n\in L^2(0,T, H^1(\Omega))$ with $\partial_t u, \partial_t n\in L^2(0,T, H^1(\Omega)')$, it is known, see for instance \cite{Ern-Guermond:04,dautray1999mathematical}, that $u, n\in \mathcal{C}^0([0,T], L^2(\Omega))$ and that $\dualH{\partial_t u(t)}{\overline{u}} = \frac{d}{dt}\escalarL{u(t)}{\overline{u}}$, $\dualH{\partial_t n(t)}{\overline{n}} = \frac{d}{dt}\escalarL{n(t)}{\overline{n}}$ for a.e. $t\in(0,T)$ and every $\bu,\bn\in H^1(\Omega)$.

\begin{proposition}
	\label{prop:maximum_principle}
	Given $u_0,v_0\in[0,1]$, any solution $(u,\mu_u,n)$ of the model \eqref{problema:van_der_zee_form_var_def} satisfies that $u(t)$ and $n(t)$ are bounded in $[0,1]$ for a.e. $t\in(0,T)$.
\end{proposition}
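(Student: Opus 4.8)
The plan is to establish the four one-sided bounds $u\ge 0$, $u\le 1$, $n\ge 0$ and $n\le 1$ separately, each by testing the corresponding equation of \eqref{problema:van_der_zee_form_var_def} with a Stampacchia-type truncation of the solution. The whole argument rests on the structural fact that, by \eqref{def:mobility} and \eqref{def:proliferation}, both the mobility $M=h_{p,q}$ and the factor $h_{r,s}$ appearing in $P$ vanish identically outside $[0,1]$ (in the relevant variable); this is precisely the feature that was built into the modified model. As a preliminary I would invoke the classical chain rule for the Bochner time derivative: since $u,n\in L^2(0,T;H^1(\Omega))$ with $\partial_t u,\partial_t n\in L^2(0,T;H^1(\Omega)')$, for any globally Lipschitz $G\colon\R\to\R$ with $G(0)=0$ the functions $G(u),G(n)$ belong to $L^2(0,T;H^1(\Omega))$, one has $\nabla(G(u))=G'(u)\nabla u$ a.e. (with $\nabla u=0$ a.e. on the level sets of $u$ corresponding to non-differentiability points of $G$), and
\[
\dualH{\partial_t u(t)}{G(u(t))}=\frac{d}{dt}\int_\Omega \widehat G(u(t))\,dx,\qquad \widehat G(s)\defeq\int_0^s G(\sigma)\,d\sigma,
\]
for a.e. $t\in(0,T)$, and likewise for $n$; see e.g. \cite{Ern-Guermond:04,dautray1999mathematical}.

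For the lower bound on $u$ I would take $\overline u=-(u(t))_\ominus\in H^1(\Omega)$ in \eqref{eq:van_der_zee_form_var_def_1}; with $G(s)=-s_\ominus$ one gets $\widehat G(s)=\tfrac12(s_\ominus)^2\ge 0$, which vanishes at $t=0$ since $u_0\in[0,1]$. The diffusion contribution $C_u\escalarLd{M(u)\nabla\mu_u}{\nabla(u)_\ominus}$ vanishes because $\nabla(u)_\ominus$ is supported on $\{u<0\}$ while $M(u)=0$ there (note that only $\mu_u\in H^1(\Omega)$ is used, so $\varepsilon=0$ is allowed), and the reaction contribution is $-\delta P_0\int_\Omega P(u,n)(\mu_n-\mu_u)_\oplus (u)_\ominus\le 0$ since $P(u,n)=h_{r,s}(u)\,n_\oplus\ge 0$. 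Hence $\tfrac{d}{dt}\,\tfrac12\|(u(t))_\ominus\|_{L^2(\Omega)}^2\le 0$ and, integrating in $t$, $(u(t))_\ominus\equiv 0$. For the upper bound I would instead test \eqref{eq:van_der_zee_form_var_def_1} with $\overline u=(u(t)-1)_\oplus$: the diffusion contribution vanishes because $M(u)=0$ on $\{u>1\}$, and the reaction contribution $\delta P_0\int_\Omega P(u,n)(\mu_n-\mu_u)_\oplus (u-1)_\oplus$ also vanishes because $h_{r,s}(u)=0$ on $\{u>1\}$; this gives $u\le 1$.

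The bounds on $n$ follow the same pattern from \eqref{eq:van_der_zee_form_var_def_3}, keeping in mind that $\mu_n=\tfrac1\delta n-\chi_0 u\in H^1(\Omega)$. Testing with $\overline n=-(n(t))_\ominus$, the cross-diffusive flux $M(n)\nabla\mu_n=M(n)(\tfrac1\delta\nabla n-\chi_0\nabla u)$ still vanishes on $\{n<0\}$ because $M(n)=0$ there (the cross-diffusion term $-\chi_0\nabla u$ does no harm), and the reaction contribution vanishes on $\{n<0\}$ because $P(u,n)$ carries the factor $n_\oplus$, which is exactly why the model keeps this factor in \eqref{def:proliferation}; this yields $n\ge 0$. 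Testing with $\overline n=(n(t)-1)_\oplus$, the diffusion contribution vanishes as before and the reaction contribution $-\delta P_0\int_\Omega P(u,n)(\mu_n-\mu_u)_\oplus (n-1)_\oplus\le 0$ has the favorable sign (nutrient consumption), so $n\le 1$.

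I expect the only genuinely delicate point to be the rigorous justification, in the $L^2(0,T;H^1(\Omega))$--$L^2(0,T;H^1(\Omega)')$ framework, of the chain-rule identity above and of $\nabla((u)_\ominus)=-\mathbf{1}_{\{u<0\}}\nabla u$, together with the verification that the products $M(u)\nabla\mu_u\cdot\nabla((u)_\ominus)$, $P(u,n)(\mu_n-\mu_u)_\oplus(u-1)_\oplus$ and their analogues are integrable and vanish a.e. on the relevant sets. Once these technical facts are available, the degeneracy of $M$ and of $h_{r,s}$ does all the work and the conclusion follows by a plain integration in time, with no Gronwall-type estimate needed.
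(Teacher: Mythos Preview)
Your proposal is correct and follows essentially the same approach as the paper: test each equation with the Stampacchia truncations $(u)_\ominus$, $(u-1)_\oplus$, $(n)_\ominus$, $(n-1)_\oplus$ and use that $M$ and $h_{r,s}$ vanish outside $[0,1]$ to kill or sign the right-hand side. The only cosmetic differences are that the paper tests with $u_\ominus$ rather than $-(u)_\ominus$, and it states the resulting differential inequalities more tersely (obtaining $=0$ rather than $\le 0$ in the cases where the reaction term actually vanishes); your more explicit discussion of the chain rule in the $L^2(0,T;H^1)$--$L^2(0,T;(H^1)')$ framework is a welcome addition but does not change the argument.
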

\begin{proof}
	Let $(u,\mu_u,n)$ be a solution of the model \eqref{problema:van_der_zee_form_var_def} and $u_0,v_0\in[0,1]$.
	\begin{itemize}
		\item First, we prove that $u, n\ge 0$. Notice that $u_\ominus\in L^2(0,T, H^1(\Omega))$ and take $\overline{u}=u(t)_\ominus$ for a.e. $t\in(0,T)$ in \eqref{eq:van_der_zee_form_var_def_1}. We arrive at $\frac{1}{2}\frac{d}{dt}\normaL{u(t)_\ominus}^2= 0$, hence $\normaL{u(t)_\ominus}=\normaL{u(0)_\ominus}=0$. Similarly, $\normaL{n(t)_\ominus}=0$ for a.e. $t\in(0,T)$.
		\item Now, we prove that $u, n\ge 1$. Notice that $(1-u)_\ominus\in L^2(0,T, H^1(\Omega))$, $\partial_t u = \partial_t (u-1)$ and take $\overline{u}=(u(t)-1)_\oplus$ for a.e. $t\in(0,T)$ in \eqref{eq:van_der_zee_form_var_def_1}. We arrive at $\frac{1}{2}\frac{d}{dt}\normaL{(u(t)-1)_\oplus}^2= 0$, hence $\normaL{(u(t)-1)_\oplus}=\normaL{(u(0)-1)_\oplus}=0$. Similarly, $\frac{1}{2}\frac{d}{dt}\normaL{(n(t)-1)_\oplus}^2\le 0$ what implies $\normaL{(n(t)-1)_\oplus}\le \normaL{(n(0)-1)_\oplus}=0$ for a.e. $t\in(0,T)$.
	\end{itemize}
\end{proof}

\begin{proposition}
	\label{prop:mass_conservation_continous}
	Let $(u,\mu_u,n)$ be a solution of the problem \eqref{problema:van_der_zee_form_var_def}. Then, this solution conserves the total mass of tumor cells and nutrients in the sense of $$\frac{d}{dt}\int_\Omega (u(x,t)+n(x,t))dx=0.$$
\end{proposition}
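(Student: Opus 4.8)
The plan is to test both evolution equations against the constant function $1\in H^1(\Omega)$ and add the resulting identities. First I would recall, from the remark following \eqref{problema:van_der_zee_form_var_def}, that $\dualH{\partial_t u(t)}{\overline{u}}=\frac{d}{dt}\escalarL{u(t)}{\overline{u}}$ and $\dualH{\partial_t n(t)}{\overline{n}}=\frac{d}{dt}\escalarL{n(t)}{\overline{n}}$ for a.e.\ $t\in(0,T)$ and every test function in $H^1(\Omega)$; in particular this is valid for the constant function $\overline{u}=\overline{n}=1$, which is admissible since $\Omega$ is bounded.

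Then I would set $\overline{u}=1$ in \eqref{eq:van_der_zee_form_var_def_1} and $\overline{n}=1$ in \eqref{eq:van_der_zee_form_var_def_3}. Since $\nabla 1=0$, the diffusive terms $C_u\escalarLd{M(u(t))\nabla\mu_u(t)}{\nabla 1}$ and $C_n\escalarLd{M(n(t))\nabla\mu_n(t)}{\nabla 1}$ vanish identically, leaving
\[
\frac{d}{dt}\int_\Omega u(t)\,dx=\delta P_0\int_\Omega P(u(t),n(t))(\mu_n(t)-\mu_u(t))_\oplus\,dx,
\]
\[
\frac{d}{dt}\int_\Omega n(t)\,dx=-\delta P_0\int_\Omega P(u(t),n(t))(\mu_n(t)-\mu_u(t))_\oplus\,dx.
\]
The two reaction integrals coincide and are finite for a.e.\ $t$: indeed $P(u,n)(\mu_n-\mu_u)_\oplus\in L^1(\Omega)$ because, by Proposition \ref{prop:maximum_principle}, the factors $P(u,n)=h_{r,s}(u)n_\oplus$ and $\mu_n=\frac{1}{\delta}n-\chi_0 u$ are bounded, while $\mu_u(t)\in L^2(\Omega)$. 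Adding the two identities therefore cancels the right-hand sides and yields $\frac{d}{dt}\int_\Omega\bigl(u(t)+n(t)\bigr)\,dx=0$ for a.e.\ $t\in(0,T)$.

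There is no genuine obstacle here: this is the standard ``test with $1$'' conservation computation, and the only point that needs a line of justification is the identification of the duality pairing $\dualH{\partial_t u(t)}{1}$ with $\frac{d}{dt}\int_\Omega u(t)\,dx$, which is already supplied by the regularity remark recalled above. I would optionally note that the pointwise cancellation of the two reaction terms is precisely the structural reason why the modified model \eqref{problema:van_der_zee_orden_2} retains mass conservation despite the introduction of the positive-part truncation $(\mu_n-\mu_u)_\oplus$ and the degenerate proliferation function $P(u,n)$.
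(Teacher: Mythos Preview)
Your proof is correct and follows exactly the same approach as the paper: test \eqref{eq:van_der_zee_form_var_def_1} and \eqref{eq:van_der_zee_form_var_def_3} with $\overline{u}=\overline{n}=1$ and add. The paper's proof is a single sentence to this effect; your additional remarks on the identification $\dualH{\partial_t u(t)}{1}=\frac{d}{dt}\int_\Omega u(t)\,dx$ and on the integrability of the reaction term are correct and simply make explicit what the paper leaves implicit.
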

\begin{proof}
	It is enough to take $\overline{u}=\overline{n}=1$ in \eqref{eq:van_der_zee_form_var_def_1} and \eqref{eq:van_der_zee_form_var_def_3} and add the resulting expressions.
\end{proof}
	
\begin{proposition}
	\label{prop:energy_law_continuous}
	If $(u,\mu_u,n)$ is a solution of the problem \eqref{problema:van_der_zee_form_var_def} with $\partial_t u\in L^2(0,T, H^1(\Omega))$, then it satisfies the following energy law
	\begin{align}
		\label{ley_energia_continua_van_der_zee}
		\frac{d E(u(t),n(t))}{dt}&
		+C_u\int_\Omega M(u(x,t))|\nabla\mu_u(x,t)|^2dx
		+C_n\int_\Omega M(n(x,t))|\nabla\mu_n(x,t)|^2 dx \notag
		\\&
		+\delta P_0\int_\Omega P(u(x,t),n(x,t))(\mu_u(x,t)-\mu_n(x,t))_\oplus ^2dx=0,
	\end{align}
	where the energy functional is defined by 
	\begin{align}
		\label{energia_van_der_zee}
		E(u,n)&\coloneqq\int_\Omega\left(\frac{\varepsilon^2}{2}|\nabla u|^2+ F(u)-\chi_0 u\, n 
		+\frac{1}{2\delta}n^2\right).
	\end{align}
	Therefore, the solution is energy stable in the sense $$\frac{d}{dt}E(u(t),n(t))\le 0.$$
\end{proposition}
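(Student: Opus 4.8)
The plan is the standard energy-method computation: differentiate $t\mapsto E(u(t),n(t))$ from \eqref{energia_van_der_zee} along the solution, collect the terms carrying $\partial_t u$ and those carrying $\partial_t n$, and recognise each block as a duality pairing against the corresponding chemical potential. First I would invoke the usual differentiation lemmas (see e.g. \cite{Ern-Guermond:04,dautray1999mathematical}): since $u,n\in L^2(0,T,H^1(\Omega))$, $\partial_t n\in L^2(0,T,H^1(\Omega)')$ and, by hypothesis, $\partial_t u\in L^2(0,T,H^1(\Omega))$, and since $F$ has polynomial growth while $u$ is bounded by Proposition \ref{prop:maximum_principle} (so $F'(u)\in L^\infty(\Omega)$), the map $t\mapsto E(u(t),n(t))$ is absolutely continuous with, for a.e. $t\in(0,T)$,
$$\tfrac{d}{dt}E(u,n)=\varepsilon^2\escalarLd{\nabla u}{\nabla\partial_t u}+\escalarL{F'(u)-\chi_0 n}{\partial_t u}+\dualH{\partial_t n}{\tfrac1\delta n-\chi_0 u},$$
where the $\partial_t u$ and $\partial_t n$ pieces coming from $\tfrac{d}{dt}\bigl(-\chi_0\escalarL{u}{n}\bigr)$ have been routed into the first and the last blocks, respectively.

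The key observation is then twofold. The first two terms on the right-hand side are exactly the right-hand side of the weak equation \eqref{eq:van_der_zee_form_var_def_2} evaluated at the test function $\overline{\mu}_u=\partial_t u(t)\in H^1(\Omega)$ --- this is precisely where the extra regularity $\partial_t u\in L^2(0,T,H^1(\Omega))$ enters --- so they equal $\escalarL{\mu_u}{\partial_t u}=\dualH{\partial_t u}{\mu_u}$; and by the explicit formula \eqref{eq:van_der_zee_form_var_def_4} one has $\tfrac1\delta n-\chi_0 u=\mu_n\in H^1(\Omega)$, so the last term is $\dualH{\partial_t n}{\mu_n}$. Hence $\tfrac{d}{dt}E(u,n)=\dualH{\partial_t u}{\mu_u}+\dualH{\partial_t n}{\mu_n}$ a.e. in $(0,T)$.

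Next I would substitute \eqref{eq:van_der_zee_form_var_def_1} with the choice $\overline{u}=\mu_u(t)$ and \eqref{eq:van_der_zee_form_var_def_3} with the choice $\overline{n}=\mu_n(t)$ (both admissible since $\mu_u,\mu_n\in H^1(\Omega)$) and add the two identities. The gradient terms give $-C_u\int_\Omega M(u)|\nabla\mu_u|^2-C_n\int_\Omega M(n)|\nabla\mu_n|^2$, while the two reaction contributions combine to $\delta P_0\escalarL{P(u,n)(\mu_n-\mu_u)_\oplus}{\mu_u-\mu_n}$; the elementary identity $w_\oplus w=w_\oplus^2$ applied with $w=\mu_n-\mu_u$ turns this into $-\delta P_0\int_\Omega P(u,n)(\mu_n-\mu_u)_\oplus^2$. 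Rearranging yields the energy law \eqref{ley_energia_continua_van_der_zee}. Finally, since $M\ge0$, $P(u,n)=h_{r,s}(u)\,n_\oplus\ge0$ and the squared factors are nonnegative, all three subtracted quantities are nonnegative, whence $\tfrac{d}{dt}E(u(t),n(t))\le0$.

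I expect the only genuine difficulty to be the rigorous justification of the first step --- the chain rule for $\tfrac{d}{dt}\int_\Omega F(u)$ and the admissibility of $\partial_t u(t)$ as a test function in \eqref{eq:van_der_zee_form_var_def_2} --- and once the hypothesis $\partial_t u\in L^2(0,T,H^1(\Omega))$ (together with the $L^\infty$ bound on $u$ from Proposition \ref{prop:maximum_principle}) is in force, everything else is a purely algebraic rearrangement. It is worth noting that the hypotheses are asymmetric in $u$ and $n$: the argument needs $\partial_t u$ to test the $\mu_u$-equation, but $\partial_t n$ only ever appears paired with $\mu_n\in H^1(\Omega)$, so $\partial_t n\in L^2(0,T,H^1(\Omega)')$ suffices.
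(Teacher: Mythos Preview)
Your proposal is correct and follows essentially the same route as the paper: test \eqref{eq:van_der_zee_form_var_def_1} with $\mu_u$, \eqref{eq:van_der_zee_form_var_def_2} with $\partial_t u$, \eqref{eq:van_der_zee_form_var_def_3} with $\mu_n$, and \eqref{eq:van_der_zee_form_var_def_4} with $\partial_t n$, then combine. The only difference is cosmetic---you begin by differentiating $E$ and recognise the resulting terms via \eqref{eq:van_der_zee_form_var_def_2} and \eqref{eq:van_der_zee_form_var_def_4}, whereas the paper starts from the four tested equations and adds them directly; your version is in fact more careful about the regularity justifications (chain rule for $\int_\Omega F(u)$, admissibility of $\partial_t u$ as a test function, the asymmetry in the hypotheses on $\partial_t u$ versus $\partial_t n$) than the paper's brief sketch.
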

\begin{proof}
	Take $\bu=\mu_u(t)$, $\bmu_u=\partial_t u(t)$, $\bn=\mu_n(t)$ in \eqref{eq:van_der_zee_form_var_def_1}--\eqref{eq:van_der_zee_form_var_def_3} and test \eqref{eq:van_der_zee_form_var_def_4} with $\partial_t n(t)$.
	Adding the resulting expressions we arrive at
	\begin{multline*}
		\varepsilon^2\escalarLd{\nabla u(t)}{\nabla(\partial_t u(t))} + \escalarL{F'(u(t))}{\partial_t u(t)} -\chi_0\left[\escalarL{n(t)}{\partial_tu(t)} + \escalarL{u(t)}{\partial_t n(t)}\right]+\frac{1}{\delta}\escalarL{n(t)}{\partial_t n(t)}\\+C_u\int_\Omega M(u(x,t))\vert\nabla\mu_u(t)\vert^2dx+C_n\int_\Omega M(n(x,t))\vert\nabla\mu_n(t)\vert^2dx\\+\delta P_0\int_\Omega P(u(x,t),n(x,t))(\mu_u(x,t)-\mu_n(x,t))_\oplus (\mu_u(x,t)-\mu_n(x,t))dx=0.
	\end{multline*}
	Therefore, it is straightforward to check that \eqref{ley_energia_continua_van_der_zee} holds.
\end{proof}

\section{Numerical approximation}
\label{sec:numerical_approx}

In this section we will develop a well suited approximation of the tumor model \eqref{problema:van_der_zee_orden_2} which preserves the physical properties presented in the previous section.

\subsection{Time-discrete scheme}
\label{sec:time_discrete}

Regarding the time discretization, we take an equispaced partition $0=t_0<t_1<\cdots<t_N=T$ of the time domain $[0,T]$ with $\Delta t=t_{m+1}-t_m$ the time step. Also, given a scalar function $v$ defined on $[0,T]$ we will denote $v^m\simeq v(t_m)$ and $\delta_t v^{m+1}=(v^{m+1}-v^m)/\Delta t$ the time-discrete derivative.

Now, we define a convex splitting of the double well potential $F(u)$ as follows:
$$
F(u)\coloneqq F_i(u)+F_e(u),\quad F_i(u)\coloneqq\frac{3}{8}u^2,\quad F_e(u)\coloneqq\frac{1}{4}u^4-\frac{1}{2}u^3-\frac{1}{8}u^2, \quad u\in[0,1],
$$
where we are going to treat the convex term, $F_i(u)$, implicitly and the concave term, $F_e(u)$, explicitly (see, for instance, \cite{eyre_1998_unconditionally, guillen-gonzalez_linear_2013,acosta-soba_CH_2022}, for more details). For this, we define
\begin{equation} \label{convex-split}
f(u^{m+1},u^m)\coloneqq F_i'(u^{m+1})+F_e'(u^m)
=\frac{1}{4}\left(3 u^{m+1}+4 (u^m)^3-6(u^m)^2-u^m\right). 
\end{equation}

We propose the following time-discrete scheme: given $(u^m,\mu_u^m,n^m)\in H^1(\Omega)^3$ 
with $u^m,n^m\in[0,1]$ in $\Omega$,  find $(u^{m+1},\mu_u^{m+1},n^{m+1}) \in H^1(\Omega)^3$ such that
\begin{subequations}
	\label{esquema:time_discrete}
	\begin{align}
		\label{eq:time_discrete_1}
		\escalarL{\delta_t u^{m+1}}{\overline{u}}&=-C_u\escalarLd{M(u^{m+1})\nabla\mu_u^{m+1}}{\nabla\overline{u}}&\notag\\&\quad+\delta P_0 \escalarL{P(u^{m+1},n^{m+1})(\mu_n^{m+1}-\mu_u^{m+1})_\oplus }{\overline{u}},\quad&\forall\overline{u}\in H^1(\Omega),\\
		\label{eq:time_discrete_2}
		\escalarL{\mu_u^{m+1}}{\overline{\mu}_u}&=\varepsilon^2 \escalarLd{\nabla u^{m+1}}{\nabla\overline{\mu}_u}&\notag\\&\quad+\escalarL{f(u^{m+1},u^m)-\chi_0 n^{m+1}}{\overline{\mu}_u},\quad&\forall\overline{\mu}_u\in H^1(\Omega),\\
		\label{eq:time_discrete_3}
		\escalarL{\delta_t n^{m+1}}{\overline{n}}&=-C_n\escalarLd{M(n^{m+1})\nabla\mu_n^{m+1}}{\nabla\overline{n}}&\notag\\&\quad-\delta P_0 \escalarL{P(u^{m+1},n^{m+1})(\mu_n^{m+1}-\mu_u^{m+1})_\oplus }{\overline{n}},\quad&\forall\overline{n}\in H^1(\Omega),
	\end{align}
\end{subequations}
where
\begin{equation}
	\label{eq:time_discrete_4}
	\mu_n^{m+1}=\frac{1}{\delta} n^{m+1} -\chi_0 u^m
\end{equation}
and $u^0=u_0$, $n^0=n_0$ in $\Omega$.

Notice that the proposed scheme \eqref{esquema:time_discrete} is just a variation of backward Euler's method where we have treated explicitly the concave part of the splitting of $F(u)$ in \eqref{eq:time_discrete_2} and a part of the cross diffusion in \eqref{eq:time_discrete_4}.

The following results are the discrete in time versions of the Propositions \ref{prop:maximum_principle}, \ref{prop:mass_conservation_continous} and \ref{prop:energy_law_continuous}. We skip the proofs, because the same properties will be proved for the fully discrete scheme given in Subsection \ref{sec:fully_discrete_scheme}.

\begin{proposition}
	\label{prop:maximum_principle_time-discrete}
	Any solution $(u^{m+1},\mu_u^{m+1},n^{m+1})$ of the time-discrete scheme \eqref{esquema:time_discrete} satisfies that $u^{m+1}, n^{m+1}\in [0,1]$ in $\Omega$.
\end{proposition}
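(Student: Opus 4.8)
The plan is to mirror the continuous argument of Proposition~\ref{prop:maximum_principle}: test the discrete equations \eqref{eq:time_discrete_1} and \eqref{eq:time_discrete_3} with the appropriate truncations of $u^{m+1}$ and $n^{m+1}$, and exploit that, by \eqref{def:mobility} and \eqref{def:proliferation}, both the mobility $M$ and the factor $h_{r,s}$ appearing in $P(u,n)$ vanish outside $[0,1]$. Since the scheme is posed assuming $u^m,n^m\in[0,1]$ (and $u^0=u_0,n^0=n_0\in[0,1]$), it suffices to carry out one step of the iteration.

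\emph{Nonnegativity.} First I would take $\overline{u}=(u^{m+1})_\ominus\in H^1(\Omega)$ in \eqref{eq:time_discrete_1}. The diffusion term vanishes because $\nabla(u^{m+1})_\ominus$ is supported (up to a null set) on $\{u^{m+1}<0\}$, where $M(u^{m+1})=0$; the reaction term vanishes because on that same set $P(u^{m+1},n^{m+1})=h_{r,s}(u^{m+1})(n^{m+1})_\oplus=0$. Hence $\escalarL{\delta_t u^{m+1}}{(u^{m+1})_\ominus}=0$, and using $\escalarL{u^{m+1}}{(u^{m+1})_\ominus}=-\normaL{(u^{m+1})_\ominus}^2$ together with $\escalarL{u^m}{(u^{m+1})_\ominus}\ge 0$ (as $u^m\ge 0$), this forces $\normaL{(u^{m+1})_\ominus}^2\le 0$, i.e. $u^{m+1}\ge 0$. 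Taking $\overline{n}=(n^{m+1})_\ominus$ in \eqref{eq:time_discrete_3} gives $n^{m+1}\ge 0$ analogously: the diffusion term again vanishes on $\{n^{m+1}<0\}$ where $M(n^{m+1})=0$ (irrespective of the explicit $-\chi_0\nabla u^m$ contribution to $\nabla\mu_n^{m+1}$), and the reaction term vanishes simply because $P(u^{m+1},n^{m+1})$ carries the factor $(n^{m+1})_\oplus$, with $(n^{m+1})_\oplus(n^{m+1})_\ominus=0$.

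\emph{Upper bound.} Next I would take $\overline{u}=(u^{m+1}-1)_\oplus\in H^1(\Omega)$ in \eqref{eq:time_discrete_1}, using $\delta_t u^{m+1}=\delta_t(u^{m+1}-1)$. On $\{u^{m+1}>1\}$ both $M(u^{m+1})=0$ and $h_{r,s}(u^{m+1})=0$, so the diffusion and reaction terms vanish and $\escalarL{\delta_t(u^{m+1}-1)}{(u^{m+1}-1)_\oplus}=0$; combined with $\escalarL{u^{m+1}-1}{(u^{m+1}-1)_\oplus}=\normaL{(u^{m+1}-1)_\oplus}^2$ and $\escalarL{u^m-1}{(u^{m+1}-1)_\oplus}\le 0$ (as $u^m\le 1$) this yields $\normaL{(u^{m+1}-1)_\oplus}^2\le 0$, i.e. $u^{m+1}\le 1$. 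Finally I take $\overline{n}=(n^{m+1}-1)_\oplus$ in \eqref{eq:time_discrete_3}: the diffusion term still vanishes on $\{n^{m+1}>1\}$, but the reaction term need not; it equals $-\delta P_0\escalarL{P(u^{m+1},n^{m+1})(\mu_n^{m+1}-\mu_u^{m+1})_\oplus}{(n^{m+1}-1)_\oplus}\le 0$, since $P(u^{m+1},n^{m+1})\ge 0$ (using $u^{m+1}\ge 0$ and $(n^{m+1})_\oplus\ge 0$), $(\mu_n^{m+1}-\mu_u^{m+1})_\oplus\ge 0$ and $(n^{m+1}-1)_\oplus\ge 0$. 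Hence $\escalarL{\delta_t(n^{m+1}-1)}{(n^{m+1}-1)_\oplus}\le 0$, and with the same two algebraic identities this gives $\frac{1}{\Delta t}\normaL{(n^{m+1}-1)_\oplus}^2\le 0$, so $n^{m+1}\le 1$.

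The only genuinely delicate point is the upper bound for $n$: exactly as in the continuous case, one cannot kill the reaction term and must instead use its sign, which is why that step yields an inequality rather than an equality. The remaining care is in making the "vanishing diffusion/reaction" claims rigorous — i.e. checking that the relevant products are zero a.e. — which reduces to the pointwise identities $M(v)=h_{r,s}(v)=0$ for $v\notin[0,1]$ from \eqref{def:mobility}--\eqref{def:proliferation} and to the fact that, for $v\in H^1(\Omega)$, $\nabla(v)_\ominus$ (resp. $\nabla(v-1)_\oplus$) vanishes a.e. off $\{v<0\}$ (resp. $\{v>1\}$). Both are routine, and everything else is the standard discrete manipulation of $\escalarL{\delta_t w^{m+1}}{\cdot}$ using $w^m\in[0,1]$.
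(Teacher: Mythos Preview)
Your argument is correct and is exactly the time-discrete analogue of the paper's proof of Proposition~\ref{prop:maximum_principle}; indeed the paper explicitly skips the proof of Proposition~\ref{prop:maximum_principle_time-discrete} and refers to this same idea (carried out in detail only for the fully discrete scheme in Theorem~\ref{thm:principio_del_maximo_DG_Van_der_Zee}). The only minor remark is that, for the sign of the reaction term in the last step, you do not need $u^{m+1}\ge 0$: the function $h_{r,s}$ is nonnegative by construction, so $P(u^{m+1},n^{m+1})\ge 0$ holds unconditionally.
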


\begin{proposition}
	\label{prop:mass_conservation_time-discrete}
	Any solution $(u^{m+1},\mu_u^{m+1},n^{m+1})$ of the time-discrete scheme \eqref{esquema:time_discrete} 
	 conserves the total mass of tumor cells and nutrients in the sense of $$\delta_t\int_\Omega (u^{m+1}(x)+n^{m+1}(x))dx=0.$$
\end{proposition}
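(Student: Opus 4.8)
The plan is to mimic exactly the argument used for the continuous statement in Proposition~\ref{prop:mass_conservation_continous}, exploiting that constant functions are admissible test functions in $H^1(\Omega)$ since $\Omega$ is bounded. First I would take $\overline{u}=1$ in \eqref{eq:time_discrete_1} and $\overline{n}=1$ in \eqref{eq:time_discrete_3}. Because $\nabla 1 = 0$, the two mobility (cross-diffusion) terms $C_u\escalarLd{M(u^{m+1})\nabla\mu_u^{m+1}}{\nabla\overline{u}}$ and $C_n\escalarLd{M(n^{m+1})\nabla\mu_n^{m+1}}{\nabla\overline{n}}$ vanish identically, which is the discrete counterpart of the conservative divergence structure of the fluxes.

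Next I would add the two resulting scalar identities. The reaction contributions $\delta P_0\,\escalarL{P(u^{m+1},n^{m+1})(\mu_n^{m+1}-\mu_u^{m+1})_\oplus}{1}$ enter \eqref{eq:time_discrete_1} and \eqref{eq:time_discrete_3} with opposite signs, so they cancel upon addition. What remains is $\escalarL{\delta_t u^{m+1}}{1} + \escalarL{\delta_t n^{m+1}}{1} = 0$, which by the definitions of $\delta_t$ and of the $L^2(\Omega)$ inner product is precisely $\delta_t\int_\Omega\bigl(u^{m+1}(x)+n^{m+1}(x)\bigr)\,dx = 0$.

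There is essentially no obstacle here: the only facts one needs are that $1\in H^1(\Omega)$ (valid on a bounded domain) and the skew-symmetric coupling of the reaction terms together with the divergence form of the mobility terms. In particular, the explicit definition \eqref{eq:time_discrete_4} of $\mu_n^{m+1}$ and the convex-splitting choice \eqref{convex-split} play no role in this computation; mass conservation is purely structural, exactly as in the continuous and semidiscrete settings.
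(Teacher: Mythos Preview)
Your proof is correct and follows exactly the approach the paper intends: the paper omits this proof, referring to the continuous analogue (Proposition~\ref{prop:mass_conservation_continous}) and to the fully discrete version, both of which proceed precisely by testing with $\overline{u}=\overline{n}=1$ so that the mobility terms vanish and the reaction terms cancel.
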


\begin{proposition}
	\label{prop:ley_energia_time-discrete}
	Any solution $(u^{m+1},\mu_u^{m+1},n^{m+1})$ of the time-discrete scheme \eqref{esquema:time_discrete} satisfies the following discrete energy law
	\begin{align}
		\label{ley_energia_time-discrete}
		\delta_t E(u^{m+1},n^{m+1})&+C_u\int_\Omega M(u^{m+1})\vert\nabla\mu_u^{m+1}\vert^2
		+C_n\int_\Omega M(n^{m+1})|\nabla\mu_n^{m+1}|^2\notag
		\\&
		+\frac{\Delta t \, \varepsilon^2}{2}\int_\Omega|\delta_t\nabla u^{m+1}|^2
		+\frac{\Delta t}{2\delta}\int_\Omega \vert\delta_t n^{m+1}\vert^2\nonumber
		\\&
		+\delta \, P_0\int_\Omega P(u^{m+1},n^{m+1})(\mu_u^{m+1}-\mu_n^{m+1})_\oplus ^2\le 0,
	\end{align}
	where $E(u,n)$ is defined in \eqref{energia_van_der_zee}.
	
	Therefore, the solution is energy stable in the sense $$E(u^{m+1},n^{m+1})\le E(u^{m},n^{m}),\quad \forall\, m\ge 0.$$
\end{proposition}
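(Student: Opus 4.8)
The plan is to reproduce at the discrete level the energy computation in the proof of Proposition~\ref{prop:energy_law_continuous}, the simplification being that in the time-discrete setting $\delta_t u^{m+1},\delta_t n^{m+1}\in H^1(\Omega)$ automatically, so no extra regularity on the iterates is needed. Concretely, I would take $\overline u=\mu_u^{m+1}$ in \eqref{eq:time_discrete_1}, $\overline\mu_u=\delta_t u^{m+1}$ in \eqref{eq:time_discrete_2}, $\overline n=\mu_n^{m+1}$ in \eqref{eq:time_discrete_3}, and multiply the algebraic relation \eqref{eq:time_discrete_4} by $\delta_t n^{m+1}$ and integrate over $\Omega$. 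With these choices the left-hand sides of \eqref{eq:time_discrete_1} and \eqref{eq:time_discrete_2} coincide (and likewise those of \eqref{eq:time_discrete_3} and the tested \eqref{eq:time_discrete_4}); subtracting pairwise cancels $\escalarL{\delta_t u^{m+1}}{\mu_u^{m+1}}$ and $\escalarL{\delta_t n^{m+1}}{\mu_n^{m+1}}$, and adding the two resulting equalities produces one identity whose terms are: the gradient term $\varepsilon^2\escalarLd{\nabla u^{m+1}}{\nabla\delta_t u^{m+1}}$; the potential term $\escalarL{f(u^{m+1},u^m)}{\delta_t u^{m+1}}$; the cross term $-\chi_0\big(\escalarL{n^{m+1}}{\delta_t u^{m+1}}+\escalarL{u^m}{\delta_t n^{m+1}}\big)$; the term $\tfrac1\delta\escalarL{n^{m+1}}{\delta_t n^{m+1}}$; the two nonnegative mobility-weighted Dirichlet integrals $C_u\int_\Omega M(u^{m+1})|\nabla\mu_u^{m+1}|^2$ and $C_n\int_\Omega M(n^{m+1})|\nabla\mu_n^{m+1}|^2$; and the reaction terms, which combine into the nonnegative integral $\delta P_0\int_\Omega P(u^{m+1},n^{m+1})(\mu_n^{m+1}-\mu_u^{m+1})_\oplus^2$ (using $x_\oplus x=x_\oplus^2$), i.e.\ the reaction term of \eqref{ley_energia_time-discrete}.

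The next step is to rewrite the four ``dynamic'' terms. For the gradient term and the $\tfrac1\delta n^2$ term, the elementary identity $a\cdot(a-b)=\tfrac12|a|^2-\tfrac12|b|^2+\tfrac12|a-b|^2$ gives $\varepsilon^2\escalarLd{\nabla u^{m+1}}{\nabla\delta_t u^{m+1}}=\delta_t\big(\tfrac{\varepsilon^2}{2}\int_\Omega|\nabla u^{m+1}|^2\big)+\tfrac{\Delta t\,\varepsilon^2}{2}\int_\Omega|\delta_t\nabla u^{m+1}|^2$ and $\tfrac1\delta\escalarL{n^{m+1}}{\delta_t n^{m+1}}=\delta_t\big(\tfrac1{2\delta}\int_\Omega|n^{m+1}|^2\big)+\tfrac{\Delta t}{2\delta}\int_\Omega|\delta_t n^{m+1}|^2$, which is exactly how the two extra positive dissipation terms of \eqref{ley_energia_time-discrete} arise. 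For the cross term, the discrete product rule $u^{m+1}n^{m+1}-u^mn^m=(u^{m+1}-u^m)n^{m+1}+u^m(n^{m+1}-n^m)$ gives $-\chi_0\big(\escalarL{n^{m+1}}{\delta_t u^{m+1}}+\escalarL{u^m}{\delta_t n^{m+1}}\big)=\delta_t\big(-\chi_0\int_\Omega u^{m+1}n^{m+1}\big)$; this telescoping is precisely (and only) where the explicit evaluation $u^m$ in \eqref{eq:time_discrete_4} is exploited, which is the reason the scheme is built that way. For the potential term, I would first invoke Proposition~\ref{prop:maximum_principle_time-discrete} (together with the hypothesis $u^m\in[0,1]$) to ensure $u^m,u^{m+1}\in[0,1]$, and then use the convex splitting $F=F_i+F_e$ underlying \eqref{convex-split}: convexity of $F_i$ gives $F_i'(u^{m+1})(u^{m+1}-u^m)\ge F_i(u^{m+1})-F_i(u^m)$, concavity of $F_e$ on $[0,1]$ gives $F_e'(u^m)(u^{m+1}-u^m)\ge F_e(u^{m+1})-F_e(u^m)$, and adding and integrating yields $\escalarL{f(u^{m+1},u^m)}{\delta_t u^{m+1}}\ge\delta_t\int_\Omega F(u^{m+1})$. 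This is the only genuinely delicate point: $F_e$ is not globally concave, so the tangent-line inequality for the explicitly treated part holds only on $[0,1]$, which is exactly why the a priori pointwise bounds must be established first.

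Finally, substituting these rewritings into the combined identity, which equals zero, and replacing the potential term (which appears with a $+$ sign) by its smaller lower bound $\delta_t\int_\Omega F(u^{m+1})$, the left-hand side becomes no larger, hence nonpositive; it now equals $\delta_t$ of the quantity $\tfrac{\varepsilon^2}{2}\int_\Omega|\nabla u^{m+1}|^2+\int_\Omega F(u^{m+1})-\chi_0\int_\Omega u^{m+1}n^{m+1}+\tfrac1{2\delta}\int_\Omega|n^{m+1}|^2$, which is precisely $E(u^{m+1},n^{m+1})$ from \eqref{energia_van_der_zee}, plus the two extra dissipation terms, plus the two mobility-weighted integrals, plus the reaction integral, and the whole sum is $\le 0$; this is \eqref{ley_energia_time-discrete}. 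The energy-stability statement is then immediate: since $M\ge 0$ and $P\ge 0$ pointwise by \eqref{def:mobility}--\eqref{def:proliferation} and $\Delta t,\varepsilon^2,\delta,P_0\ge 0$, every summand of \eqref{ley_energia_time-discrete} other than $\delta_t E(u^{m+1},n^{m+1})$ is nonnegative, whence $\delta_t E(u^{m+1},n^{m+1})\le 0$, i.e.\ $E(u^{m+1},n^{m+1})\le E(u^m,n^m)$ for all $m\ge 0$.
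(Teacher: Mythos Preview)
Your proposal is correct and follows essentially the same approach as the paper, which in fact skips this proof and carries out the identical testing strategy ($\overline u=\mu_u^{m+1}$, $\overline\mu_u=\delta_t u^{m+1}$, $\overline n=\mu_n^{m+1}$, test \eqref{eq:time_discrete_4} with $\delta_t n^{m+1}$, then combine and use the convex-splitting inequality) for the fully discrete scheme in Theorem~\ref{thm:energia_esquema_van_der_zee}. Your explicit invocation of Proposition~\ref{prop:maximum_principle_time-discrete} to justify concavity of $F_e$ on $[0,1]$ is a point the paper leaves implicit, and your reaction term $(\mu_n^{m+1}-\mu_u^{m+1})_\oplus^2$ is what the computation actually yields (matching the fully discrete law \eqref{ley_energia_discreta_van_der_zee}); the swapped arguments in the statement \eqref{ley_energia_time-discrete} appear to be a typo in the paper and do not affect the energy-stability conclusion.
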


\begin{remark}
	This first order nonlinear time-discrete scheme \eqref{esquema:time_discrete} preserves the properties of the continuous models, namely  the
	conservation $\int_\Omega (u^{m+1} + n^{m+1}) = \int_\Omega (u^{m} + n^{m})$, the point-wise bounds  $u^{m+1},n^{m+1}\in[0,1]$ in $\Omega$ and the energy dissipation $E(u^{m+1},n^{m+1})\le E(u^{m},n^{m})$. In particular, the
	 convex-splitting technique used in 
	 \eqref{convex-split} 
	  allows us to guarantee the dissipation of the discrete version of the exact energy of the model \eqref{energia_van_der_zee}.

		There exist other widely used techniques such as the SAV approach which does rely on the dissipation of a modified energy via an auxiliary variable that must be introduced in the scheme. An advantage of the SAV approach is that it usually leads, in more simple models, to linear schemes with decreasing modified energy but, in this case, since we want to preserve the pointwise bounds, we require anyway a nonlinear discretization as shown in \eqref{esquema:time_discrete}.

	Therefore, in this context, we prefer the convex-splitting technique against the SAV approach. However, it would be interesting to explore whether it is possible to extend the ideas of this work to design a second-order in time approximation, which is not straightforward and may require a different approach such as a SAV-type discretization.
\end{remark}

\subsection{Fully discrete scheme}
\label{sec:fully_discrete_scheme}

For the space discretization, we consider a shape-regular  triangular mesh $\T_h=\{K\}_{K\in \T_h}$ of size $h$ over $\Omega$, and we note by $\E_h$ the set of the edges of $\T_h$ where $\E_h^{\text{i}}$ denotes the \textit{interior edges} and $\E_h^{\text{b}}$ denotes the \textit{boundary edges} such that $\E_h=\E_h^{\text{i}}\cup\E_h^{\text{b}}$.

We set the following mesh orientation: the unit normal vector $\nn_e$ associated to an interior edge $e\in\E_h^{\text{i}}$ shared by the elements $K, L\in\T_h$, i.e. $e=\partial K\cap\partial L$, is exterior to $K$ pointing to $L$. Moreover, for the boundary edges $e\in\E_h^{\text{b}}$, the unit normal vector $\nn_e$ points outwards of the domain $\Omega$.

In addition, we assume the following hypothesis:
\begin{hyp}
	\label{hyp:mesh}
	The line between the barycenters of any adjacent triangles $K$ and $L$ is orthogonal to the interface $e=K\cap L\in\E_h^{\text{i}}$.
\end{hyp}
One example of a mesh satisfying Hypothesis~\ref{hyp:mesh} is plotted in Figure~\ref{fig:mesh}. For other examples and a further insight on this property we refer the reader to \cite{acosta-soba_KS_2022}.

We define the \textit{average} $\media{\cdot}$ and the \textit{jump} $\salto{\cdot}$ of a scalar function $v$ on an edge $e\in\E_h$ as follows:
\begin{equation*}
		\media{v}\coloneqq
		\begin{cases}
			\dfrac{\vK+\vL}{2}&\text{if } e\in\E_h^{\text{i}}\\
			\vK&\text{if }e\in\E_h^{\text{b}}
		\end{cases},
		\qquad
		\salto{v}\coloneqq
		\begin{cases}
			\vK-\vL&\text{if } e\in\E_h^{\text{i}}\\
			\vK&\text{if }e\in\E_h^{\text{b}}
		\end{cases}.
\end{equation*}

Let $\Pd_k(\T_h)$ and $\Pc_k(\T_h)$ be the spaces of Finite Element discontinuous and continuous functions, respectively, whose restriction to the elements $K$ of $\T_h$ are polynomials of degree $k\ge 0$. Also, we define the projection $\Pi_0\colon L^1(\Omega) \rightarrow \Pd_0(\T_h)$ and the regularization $\Pi^h_1\colon L^1(\Omega)\rightarrow \Pc_1(\T_h)$ of a function $g\in L^1(\Omega)$ as the function satisfying the following:
\begin{align}
	\label{eq:esquema_DG_Pi0}
	\escalarL{g}{\overline{w}}&=
	\escalarL{\Pi_0 g}{\overline{w}},&\forall\,\overline{w}\in \Pd_0(\T_h),
	\\
	\label{eq:esquema_DG_Pih1}
\escalarL{g}{\overline{\phi}}&=\escalarML{\Pi^h_1 g}{\overline{\phi}},&\forall\,\overline{\phi}\in \Pc_1(\T_h),
\end{align}
where $\escalarML{\cdot}{\cdot}$ is the mass-lumping scalar product in $\Pc_1(\T_h)$. 
In fact, $(\Pi_0 g)|_K =( \int_K g)/|K|$  for all $K\in \T_h $, 
and 
$(\Pi^h_1 g)(a_j)=(\sum_{K\in \text{Sop} (a_j)} \int_K g\ \varphi_j) /(\sum_{K\in \text{Sop} (a_j)}|K|/(d+1) )$
 for all vertex $a_j$ with $\varphi_j $ the canonical basis of $\Pc_1(\T_h)$.
For a further insight on discontinuous Galerkin methods we recommend \cite{di_pietro_ern_2012}.

\

We propose the following fully discrete scheme for the model \eqref{problema:van_der_zee_orden_2}: given $u^m,n^m\in\Pd_0(\T_h)$ with $u^{m},n^{m}\in[0,1]$ and $\mu_u^m\in\Pc_1(\T_h)$, find $u^{m+1}, n^{m+1}\in \Pd_0(\T_h)$ and $\mu_u^{m+1} \in \Pc_1(\T_h)$, such that
\begin{subequations}
	\label{esquema_DG_upw_Eyre_van_der_Zee}
	\begin{align}
		\label{eq:esquema_DG_upw_Eyre_van_der_Zee_1}
		\escalarL{\delta_tu^{m+1}}{\overline{u}}&=-C_u\aupw{\Pi_0\mu_u^{m+1}}{M(u^{m+1})}{\overline{u}}&\notag\\&\quad+\delta P_0\escalarL{P(u^{m+1},n^{m+1})(\mu_n^{m+1}-\Pi_0\mu_u^{m+1})_\oplus }{\overline{u}},\quad&\forall\overline{u}\in\Pd_0(\T_h),\\
		\label{eq:esquema_DG_upw_Eyre_van_der_Zee_2}
		\escalarML{\mu_u^{m+1}}{\overline{\mu}_u}&=\varepsilon^2 \escalarLd{\nabla \up^{m+1}}{\nabla\overline{\mu}_u}+ \escalarL{f(\up^{m+1},\up^m)}{\overline{\mu}_u}\nonumber\\&\quad-\chi_0\escalarL{ n^{m+1}}{\overline{\mu}_u},\quad&\forall\overline{\mu}_u\in \Pc_1(\T_h),\\
		\label{eq:esquema_DG_upw_Eyre_van_der_Zee_3}
		\escalarL{\delta_t n^{m+1}}{\overline{n}}&=-C_n\aupw{\mu_n^{m+1}}{M(n^{m+1})}{\overline{n}}&\notag\\&\quad-\delta P_0 \escalarL{P(u^{m+1},n^{m+1})(\mu_n^{m+1}-\Pi_0\mu_u^{m+1})_\oplus }{\overline{n}},\quad&\forall\overline{n}\in \Pd_0(\T_h),
	\end{align}
\end{subequations}
where 
\begin{equation}
	\label{eq:esquema_DG_upw_Eyre_van_der_Zee_4}
	\mu_n^{m+1}=\frac{1}{\delta}n^{m+1}  -\chi_0 \Pi_0(\up^{m}),
\end{equation}
 $u^0=u_0$, $n^0=n_0$ and $\aupw{\cdot}{\cdot}{\cdot}$ is an upwind form defined in Subsection \ref{sec:def_aupw} below. 
Note that, 
\begin{equation} \label{Pi_1}
(\Pi^h_1 u^m)(a_j)=\frac{\sum_{L\in \text{Sop} (a_j)} |L| u^m_L} {\sum_{L\in \text{Sop} (a_j)}|L| },
\quad
\forall\, a_j,
\end{equation}
and
\begin{equation} \label{Pi_0Pi_1}
\Pi_0(\up^{m})|_K=\frac1{d+1} \sum_{a_j\in K}(\Pi^h_1 u^m)(a_j) ,
\quad
 \forall\, K\in \T_h.
\end{equation}

To ease the notation, we denote the solution of this fully discrete scheme in the same way than the time discrete scheme \eqref{esquema:time_discrete}. From now on we will refer to the solution of the fully discrete scheme unless otherwise specified.   

Notice that we have introduced the regularization of $u^{m+1}$, $\up^{m+1}$ to preserve the diffusion term in \eqref{eq:esquema_DG_upw_Eyre_van_der_Zee_2}.
In fact, this regularized variable will be regarded as our approximation of the tumor cells volume fraction as, according to the results in Subsection \ref{sec:properties_DG_scheme}, it preserves the maximum principle and satisfies a discrete energy law. Moreover, in order to preserve the maximum principle and the dissipation of the energy, we consider mass lumping in the term $\escalarML{\mu_u^{m+1}}{\overline{\mu}}$.

\begin{remark}
	The homogeneous Neumann boundary conditions on $u^m$ and $n^m$ have been implicitly imposed in the definition of $\aupw{\cdot}{\cdot}{\cdot}$, see \eqref{def:aupw}. In addition, the boundary condition $\nabla \up^{m}\cdot\nn=0$ on $\partial\Omega\times(0,T)$ is imposed implicitly by the term $\escalarL{\nabla \up^{m}}{\nabla\overline{\mu}}$ in \eqref{eq:esquema_DG_upw_Eyre_van_der_Zee_2}.
\end{remark}

\begin{remark}
	The scheme \eqref{esquema_DG_upw_Eyre_van_der_Zee} is nonlinear so we will have to use an iterative procedure, such as Newton's method, to approach its solution.
\end{remark}

\subsubsection{Definition of $\aupw{\cdot}{\cdot}{\cdot}$}
\label{sec:def_aupw}

First of all, following the ideas in \cite{acosta-soba_CH_2022}, in order to preserve the maximum principle using an upwind approximation we will split the mobility function into its increasing and its decreasing part as follows:
$$
M^\uparrow(v)=
\begin{cases}
	M(v),& v\le v^*,\\
	M(v^*),& v>v^*,
\end{cases}
\quad
M^\downarrow(v)=
\begin{cases}
	0,& v\le v^*,\\
	M(v)-M(v^*),& v>v^*,
\end{cases}
$$
where  $v^*\in\R$ is the point where the maximum of $M(v)$ is attained, which can be obtained
by simple algebraic computations. Note that 
 $M(v)=M^\uparrow(v)+M^\downarrow(v)$.

Now, we define the following upwind form for $v,\bv,\mu\in\P_0(\T_h)$:
\begin{multline}
	\label{def:aupw}
	\aupw{\mu}{M(v)}{\bv}\defeq\\ \sum_{e\in\E_h^i,e=K\cap L}\int_e\left(\left(-\nabland\mu\right)_\oplus \left(M^\uparrow(\vK) + M^\downarrow(\vL)\right)_\oplus-(-\nabland\mu)_\ominus \left(M^\uparrow(\vL) + M^\downarrow(\vK)\right)_\oplus\right)\salto{\bv}
\end{multline}
with
\begin{equation}
\label{eq:approx_gradn}
\nabland\mu =\frac{-\salto{\mu}}{\mathcal{D}_e(\T_h)}= \frac{\muL-\muK}{\mathcal{D}_e(\T_h)},
\end{equation}
a reconstruction of the normal gradient using $\P_0(\T_h)$ functions for every $e\in\E_h^i$ with $e=K\cap L$ (see \cite{acosta-soba_KS_2022} for more details). We have denoted $\mathcal{D}_e(\T_h)$ the distance between the barycenters of the triangles $K$ and $L$ of the mesh $\T_h$ that share $e\in\Ehi$.
This way, we can rewrite \eqref{def:aupw} as
\begin{multline}
\label{def:aupw_barycenter}
\aupw{\mu}{M(v)}{\bv}\defeq \\\sum_{e\in\E_h^i,e=K\cap L}\frac{1}{\mathcal{D}_e(\T_h)}\int_e\left(\salto{\mu}_\oplus \left(M^\uparrow(\vK) + M^\downarrow(\vL)\right)_\oplus-\salto{\mu}_\ominus \left(M^\uparrow(\vL) + M^\downarrow(\vK)\right)_\oplus\right)\salto{\bv}.
\end{multline}

\begin{remark}
The form $\aupw{\mu}{M(v)}{\bv}$ is an upwind approximation of the convective term
$$
-\escalarL{M(v)\nabla\mu}{\nabla\bv},\quad \bv\in H^1(\Omega),
$$
taking into consideration that the orientation of the flux is determined by both the orientation of $\nabla\mu$ and the sign of $M'(v)$ as follows:
$$
\nabla\cdot(M(v)\nabla\mu)=M'(v)\nabla v\nabla\mu+M(v)\Delta \mu.
$$
In order to develop this approximation we have followed the ideas of \cite{acosta-soba_CH_2022,acosta-soba_KS_2022,mazen_saad_2014} and we have considered an approximation of $M(v)$ in \eqref{def:aupw} by means of its increasing and decreasing parts, $M^\uparrow(v)$ and $M^\downarrow(v)$, whose derivatives are positive and negative, respectively. However, unlike in \cite{acosta-soba_CH_2022}, we have also truncated the mobility $M(v)$
to avoid negative approximations of $M(v)$ that may lead to a loss of energy stability.
\end{remark}

\subsubsection{Properties of the fully discrete scheme}
\label{sec:properties_DG_scheme}

\begin{proposition}[Conservation]
	The scheme \eqref{esquema_DG_upw_Eyre_van_der_Zee} conserves the total mass of cells and nutrients in the following sense: for all $m\ge 0$,
	\begin{align*}
	\int_\Omega (u^{m+1}+n^{m+1})=\int_\Omega (u^m+n^m)\quad\text{and}\quad\int_\Omega (\up^{m+1}+n^{m+1})=\int_\Omega (\up^m+n^m).
	\end{align*}
\end{proposition}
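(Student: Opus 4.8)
The plan is to test the two ``divergence-form'' equations of the scheme with the constant function $1$ and to exploit two structural facts: (i) the antisymmetry of the reaction terms, and (ii) that $1$ lies in the kernel of the upwind form $a_h^{\text{upw}}$. This is the fully discrete analogue of the argument used for Proposition~\ref{prop:mass_conservation_continous}.

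First I would take $\overline{u}=1\in\Pd_0(\T_h)$ in \eqref{eq:esquema_DG_upw_Eyre_van_der_Zee_1} and $\overline{n}=1\in\Pd_0(\T_h)$ in \eqref{eq:esquema_DG_upw_Eyre_van_der_Zee_3}. Since $\salto{1}=1_K-1_L=0$ on every interior edge $e=K\cap L$, the representation \eqref{def:aupw_barycenter} immediately gives $\aupw{\Pi_0\mu_u^{m+1}}{M(u^{m+1})}{1}=\aupw{\mu_n^{m+1}}{M(n^{m+1})}{1}=0$, and the two reaction contributions $\pm\,\delta P_0\escalarL{P(u^{m+1},n^{m+1})(\mu_n^{m+1}-\Pi_0\mu_u^{m+1})_\oplus}{1}$ cancel upon addition. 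Using $\escalarL{\delta_t u^{m+1}}{1}=\delta_t\int_\Omega u^{m+1}$ and the analogous identity for $n^{m+1}$, adding the two tested equations yields $\delta_t\int_\Omega(u^{m+1}+n^{m+1})=0$, that is, $\int_\Omega(u^{m+1}+n^{m+1})=\int_\Omega(u^m+n^m)$ for every $m\ge 0$.

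Next, to pass to the regularized variable $\up^{m+1}=\Pi_1^h u^{m+1}$, I would check that the $\Pc_1$-regularization is integral-preserving, i.e. $\int_\Omega\Pi_1^h g=\int_\Omega g$ for all $g\in L^1(\Omega)$. Indeed, testing the defining relation \eqref{eq:esquema_DG_Pih1} with $\overline{\phi}=1\in\Pc_1(\T_h)$ gives $\int_\Omega g=\escalarL{g}{1}=\escalarML{\Pi_1^h g}{1}$, and the mass-lumping product against $1$ equals the exact integral on $\Pc_1(\T_h)$, because on each simplex $K$ the nodal rule $\frac{|K|}{d+1}\sum_{a_j\in K}\phi(a_j)$ integrates affine functions exactly, so $\escalarML{\phi}{1}=\sum_K\int_K\phi=\int_\Omega\phi$; alternatively this is a direct computation from \eqref{Pi_1}--\eqref{Pi_0Pi_1}. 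Hence $\int_\Omega\up^{m+1}=\int_\Omega u^{m+1}$ and $\int_\Omega\up^{m}=\int_\Omega u^{m}$, and the second identity of the proposition follows at once from the first.

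The proof is essentially a verification and I do not expect any serious obstacle; the only points needing a one-line observation are that the upwind form annihilates constants via the factor $\salto{\bv}$ and that the mass-lumped $\Pc_1$-projection preserves the integral — both by design of the scheme \eqref{esquema_DG_upw_Eyre_van_der_Zee}, in particular the choice of mass lumping in \eqref{eq:esquema_DG_upw_Eyre_van_der_Zee_2} and \eqref{eq:esquema_DG_Pih1}.
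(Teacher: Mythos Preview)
Your proof is correct and follows essentially the same approach as the paper: test \eqref{eq:esquema_DG_upw_Eyre_van_der_Zee_1} and \eqref{eq:esquema_DG_upw_Eyre_van_der_Zee_3} with $1$, add, and then use that $\Pi_1^h$ preserves the integral. You simply spell out in more detail why the upwind form annihilates constants and why mass lumping makes $\Pi_1^h$ integral-preserving, whereas the paper states these facts tersely.
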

\begin{proof}
	Just need to take $\overline{u}=1$ in \eqref{eq:esquema_DG_upw_Eyre_van_der_Zee_1} and $\overline{n}=1$ in \eqref{eq:esquema_DG_upw_Eyre_van_der_Zee_3} and add both expressions to obtain:
	\begin{align*}
		\int_\Omega (u^{m+1}+n^{m+1})=\int_\Omega (u^m+n^m).
	\end{align*}
	Moreover, due to the definition of the regularization $\Pi_1^h$, we have that $\int_\Omega u^{m+1}=\int_\Omega \up^{m+1}$ and $\int_\Omega u^{m}=\int_\Omega \up^{m}$, what yields
	\begin{align*}
		\int_\Omega (\up^{m+1}+n^{m+1})=\int_\Omega (\up^m+n^m).
	\end{align*}
\end{proof}

\begin{theorem}[Pointwise bounds]
	\label{thm:principio_del_maximo_DG_Van_der_Zee}
	
	Let $(u^{m+1}, \mu_u^{m+1}, n^{m+1})$ be a solution of the scheme \eqref{esquema_DG_upw_Eyre_van_der_Zee}, then $u^{m+1}, n^{m+1}\in[0,1]$ in $\Omega$ provided $u^{m},n^{m}\in[0,1]$ in $\Omega$.
\end{theorem}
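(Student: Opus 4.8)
\emph{Proof sketch.} I would argue by contradiction and prove the four one-sided bounds $u^{m+1}\ge0$, $u^{m+1}\le1$, $n^{m+1}\ge0$, $n^{m+1}\le1$ separately, in each case testing the relevant discrete equation with the characteristic function $\mathbf{1}_{K_0}\in\Pd_0(\T_h)$ of a single element $K_0$ on which the bound is supposed to fail. Since $u^{m+1},n^{m+1},\mu_n^{m+1}$ and $\Pi_0\mu_u^{m+1}$ are piecewise constant, every term except the upwind one localizes to $K_0$, and the argument reduces to showing that the discrete time-derivative term and the right-hand side carry opposite signs, one strict and one not.

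Consider for instance $u^{m+1}\ge 0$ and suppose $u^{m+1}_{K_0}<0$. Testing \eqref{eq:esquema_DG_upw_Eyre_van_der_Zee_1} with $\mathbf{1}_{K_0}$, the left-hand side is $\tfrac{|K_0|}{\Delta t}(u^{m+1}_{K_0}-u^{m}_{K_0})<0$ because $u^m_{K_0}\ge 0$ by hypothesis, and the reaction term vanishes since $P(u^{m+1}_{K_0},n^{m+1}_{K_0})=h_{r,s}(u^{m+1}_{K_0})(n^{m+1}_{K_0})_\oplus=0$ (the degenerate proliferation $h_{r,s}$ is supported in $[0,1]$). So everything comes down to $\aupw{\Pi_0\mu_u^{m+1}}{M(u^{m+1})}{\mathbf{1}_{K_0}}\le 0$, which would make the right-hand side nonnegative and contradict the identity. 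The bound $u^{m+1}\le 1$ is handled identically starting from an element with $u^{m+1}_{K_0}>1$: the time-derivative term is then strictly positive, the reaction term again vanishes ($h_{r,s}(u^{m+1}_{K_0})=0$ for $u^{m+1}_{K_0}>1$), and what is needed is $\aupw{\Pi_0\mu_u^{m+1}}{M(u^{m+1})}{\mathbf{1}_{K_0}}\ge 0$.

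The sign of the upwind form is the crux, and I expect it to be the main obstacle. Testing \eqref{def:aupw_barycenter} with $\mathbf{1}_{K_0}$ leaves only the interior edges $e=K_0\cap L$ of $K_0$; after checking that each edge term does not depend on which adjacent triangle is labelled $K$, the summand for $e$ equals, with $d_e\coloneqq(\Pi_0\mu_u^{m+1})_{K_0}-(\Pi_0\mu_u^{m+1})_L$,
\[
\frac{|e|}{\mathcal{D}_e(\T_h)}\Big((d_e)_\oplus\big(M^\uparrow(u^{m+1}_{K_0})+M^\downarrow(u^{m+1}_L)\big)_\oplus-(d_e)_\ominus\big(M^\uparrow(u^{m+1}_L)+M^\downarrow(u^{m+1}_{K_0})\big)_\oplus\Big).
\]
The splitting is built so that $M^\uparrow$ is nondecreasing with $0\le M^\uparrow\le M(v^*)=1$ and $M^\downarrow$ is nonincreasing with $-1\le M^\downarrow\le 0$, where $v^*$ is the argmax of $M$ and $M^\downarrow(v)=M(v)-1=-1$ for $v\ge1$. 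When $u^{m+1}_{K_0}<0\le v^*$ one has $M^\uparrow(u^{m+1}_{K_0})=M(u^{m+1}_{K_0})=0$ and $M^\downarrow(u^{m+1}_{K_0})=0$, so, using $M^\downarrow(u^{m+1}_L)\le0$ and $M^\uparrow(u^{m+1}_L)\ge0$, the summand reduces to $-\tfrac{|e|}{\mathcal{D}_e(\T_h)}(d_e)_\ominus M^\uparrow(u^{m+1}_L)\le0$; when $u^{m+1}_{K_0}>1>v^*$ one has $M^\uparrow(u^{m+1}_{K_0})=1$ and $M^\downarrow(u^{m+1}_{K_0})=-1$, so, using $M^\uparrow(u^{m+1}_L)\le1$ and $M^\downarrow(u^{m+1}_L)\ge-1$, it reduces to $\tfrac{|e|}{\mathcal{D}_e(\T_h)}(d_e)_\oplus\big(1+M^\downarrow(u^{m+1}_L)\big)\ge0$. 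Summing over the edges of $K_0$ gives both inequalities. This step is exactly where the additional \emph{truncation} of $M$ (absent in \cite{acosta-soba_CH_2022}) matters, so that the positive/negative-part bookkeeping closes up at both endpoints $0$ and $1$.

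The bounds on $n^{m+1}$ follow by running the same argument on \eqref{eq:esquema_DG_upw_Eyre_van_der_Zee_3}, noting that $\mu_n^{m+1}\in\Pd_0(\T_h)$ by \eqref{eq:esquema_DG_upw_Eyre_van_der_Zee_4} so that $\aupw{\mu_n^{m+1}}{M(n^{m+1})}{\cdot}$ is well defined. The only difference is the reaction term: for $n^{m+1}\ge0$ it vanishes on a violating element because $(n^{m+1}_{K_0})_\oplus=0$ forces $P(u^{m+1}_{K_0},n^{m+1}_{K_0})=0$, and for $n^{m+1}\le1$ the term $-\delta P_0\,P(u^{m+1}_{K_0},n^{m+1}_{K_0})(\mu_n^{m+1}-\Pi_0\mu_u^{m+1})_\oplus|_{K_0}$ is only $\le0$ rather than zero, which is still the sign needed since $P\ge0$ and $(\mu_n^{m+1}-\Pi_0\mu_u^{m+1})_\oplus\ge0$. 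Finally, since by \eqref{Pi_1} each nodal value of $\up^{m+1}=\Pi^h_1u^{m+1}$ is a convex combination of the $u^{m+1}_L\in[0,1]$, the regularized variable $\up^{m+1}$ also lies in $[0,1]$.
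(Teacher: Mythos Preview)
Your proof is correct and follows essentially the same strategy as the paper: localize to a single element by testing with its characteristic function, use the degeneracy of $P$ to kill (or sign) the reaction term, and show the upwind contribution carries the right sign thanks to the truncation of $M$ outside $[0,1]$. The paper's version differs only cosmetically---it selects the \emph{extremal} element $K^*$ (and tests with $(u_{K^*}^{m+1})_\ominus\mathbf{1}_{K^*}$, etc.) and phrases the upwind-sign step via the monotonicity of $M^\uparrow,M^\downarrow$ together with $u_L^{m+1}\ge u_{K^*}^{m+1}$, rather than your direct evaluation $M^\uparrow(u_{K_0}^{m+1})=M^\downarrow(u_{K_0}^{m+1})=0$ at a violating value; the underlying mechanism is identical.
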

\begin{proof}
	Firstly, we prove
	that $u^{m+1}, n^{m+1}\ge 0$ in $\Omega$.

	To prove that $u^{m+1}\ge 0$ we may take the following $\Pd_0(\T_h)$ test function
	\begin{align*}
		\overline{u}^*=
		\begin{cases}
			(\uKs^{m+1})_\ominus &\text{in }K^*\\
			0&\text{out of }K^*
		\end{cases},
	\end{align*}
	where $K^*$ is an element of $\T_h$ such that  $\uKs^{m+1}=\min_{K\in\T_h}u_{K}^{m+1}$. Then, by definition of $P(u,n)$ in \eqref{def:proliferation},
	$$
	\delta P_0\escalarL{P(u^{m+1},n^{m+1}) (\mu_n^{m+1}-\mu_u^{m+1})_\oplus }{\overline{u}^*}=0,
	$$
	equation \eqref{eq:esquema_DG_upw_Eyre_van_der_Zee_1} becomes
	\begin{equation}
		\label{test_u_0_vdz}
		\vert K^*\vert\delta_t \uKs^{m+1}(\uKs^{m+1})_\ominus = - C_u\aupw{\Pi_0\mu_u^{m+1}}{M(u^{m+1})_\oplus }{\bu^*}.
	\end{equation}
	
	Now,  since $\uL^{m+1}\ge \uKs^{m+1}$ we can assure that
	$$M^\uparrow(\uL^{m+1}) \ge M^\uparrow(\uKs^{m+1})\quad
	\hbox{and} \quad
	M^\downarrow(\uL^{m+1}) \le M^\downarrow(\uKs^{m+1}).
	$$
	Hence, using that the positive part is an increasing function, we obtain
	$$
	\aupw{\Pi_0\mu_u^{m+1}}{M(u^{m+1})}{\bu^*}\le 0,
	$$
	which yields $\vert K^*\vert\delta_t \uKs^{m+1}(\uKs^{m+1})_\ominus \ge0$.
	
	Consequently,
	$$
	0
	\le |K^*|(\delta_t \uKs^{m+1})(\uKs^{m+1})_\ominus 
	=
	-\frac{|K^*|}{\Delta t}\left((\uKs^{m+1})_\ominus ^2+\uKs^m(\uKs^{m+1})_\ominus \right)
	\le 0,
	$$
	which implies, since $\uKs^m\ge 0$, that $(\uKs^{m+1})_\ominus =0$. Hence $u^{m+1}\ge0$ in $\Omega$.
	
	Similarly, taking the following $\Pd_0(\T_h)$ test
	function in \eqref{eq:esquema_DG_upw_Eyre_van_der_Zee_3},
	\begin{align*}
		\overline{n}^*=
		\begin{cases}
			(\nKs^{m+1})_\ominus &\text{in }K^*\\
			0&\text{out of }K^*
		\end{cases}
	\end{align*}
	 where $K^*$ is an element of $\T_h$ such that  $\nKs^{m+1}=\min_{K\in\T_h}n_{K}^{m+1}$ we get $n^{m+1}\ge 0$ in $\Omega$.
	
	\
	
	Secondly, we prove that $u^{m+1},n^{m+1}\le 1$ in $\Omega$.
	
	To prove that $u^{m+1}\le 1$, taking the following test function in \eqref{eq:esquema_DG_upw_Eyre_van_der_Zee_1},
	\begin{align*}
		\overline{u}^*=
		\begin{cases}
			(\uKs^{m+1}-1)_\oplus &\text{in }K^*\\
			0&\text{out of }K^*
		\end{cases},
	\end{align*}
	where $K^*$ is an element of $\T_h$ such that  $\uKs^{m+1}=\max_{K\in\T_h}u_{K}^{m+1}$ and using similar arguments than above, we arrive at
	$$
	|K^*|\delta_t \uKs^{m+1}(\uKs^{m+1}-1)_\oplus \le0.
	$$
	Therefore, it is satisfied that
	\begin{align*}
		0&\ge
		|K^*|\delta_t \uKs^{m+1}(\uKs^{m+1}-1)_\oplus 
		=\frac{|K^*|}{\Delta t}\left((\uKs^{m+1}-1)+(1-\uKs^m)\right)(\uKs^{m+1}-1)_\oplus \\
		&=
		\frac{|K^*|}{\Delta t}\left((\uKs^{m+1}-1)^2_\oplus +(1-\uKs^m)(\uKs^{m+1}-1)_\oplus \right)
		\ge0,
	\end{align*}
	what yields $(\uKs^{m+1}-1)_\oplus =0$ and, therefore, $u^{m+1}\le1$ in $\Omega$.
	
	Finally, taking the test function in \eqref{eq:esquema_DG_upw_Eyre_van_der_Zee_3} 
		\begin{align*}
		\overline{n}^*=
		\begin{cases}
			(\nKs^{m+1}-1)_\oplus &\text{in }K^*\\
			0&\text{out of }K^*
		\end{cases}
	\end{align*}
	 where $K^*$ is an element of $\T_h$ such that  $\nKs^{m+1}=\max_{K\in\T_h}n_{K}^{m+1}$ we obtain, similarly, that $n^{m+1}\le 1$ in $\Omega$.
\end{proof}

The following result is a direct consequence of the previous Theorem \ref{thm:principio_del_maximo_DG_Van_der_Zee} and the equality \eqref{Pi_1} of the regularization $\Pi_1^h$.
\begin{corollary}
	\label{cor:principio_del_maximo_w_DG_Van_der_Zee}
	It satisfies $\up^{m+1}\in[0,1]$ in $\Omega$ provided $u^{m+1}\in[0,1]$ in $\Omega$.
\end{corollary}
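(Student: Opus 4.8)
The plan is to read the claim off directly from the explicit nodal formula \eqref{Pi_1} for the regularization $\Pi_1^h$, using that a continuous piecewise-affine function does not overshoot its values at the mesh vertices, so the argument is essentially a one-line convexity observation (which is why the statement is recorded as a corollary).

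First I would note that, by Theorem \ref{thm:principio_del_maximo_DG_Van_der_Zee}, the hypothesis $u^{m+1}\in[0,1]$ in $\Omega$ means $u^{m+1}_L\in[0,1]$ for every $L\in\T_h$. Then, for each vertex $a_j$ of $\T_h$, formula \eqref{Pi_1} writes $(\up^{m+1})(a_j)$ as
\[
(\Pi_1^h u^{m+1})(a_j)=\sum_{L\in\text{Sop}(a_j)}\lambda_L\,u^{m+1}_L,\qquad \lambda_L\defeq\frac{|L|}{\sum_{L'\in\text{Sop}(a_j)}|L'|},
\]
which is a convex combination, since the weights $\lambda_L$ are strictly positive and sum to $1$. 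Hence $(\up^{m+1})(a_j)$ lies between $\min_{L\in\text{Sop}(a_j)}u^{m+1}_L$ and $\max_{L\in\text{Sop}(a_j)}u^{m+1}_L$; in particular $(\up^{m+1})(a_j)\in[0,1]$ for every vertex $a_j$.

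Finally, since $\up^{m+1}\in\Pc_1(\T_h)$ is affine on each element $K\in\T_h$, its restriction to $K$ takes values in the convex hull of its three nodal values; combined with the bound on the nodal values this yields $0\le\up^{m+1}|_K\le 1$ for every $K\in\T_h$, i.e. $\up^{m+1}\in[0,1]$ in $\Omega$. There is no real obstacle in this argument; the only two points worth stating carefully are that the weights in \eqref{Pi_1} are positive and that an affine function on a simplex is bounded by its vertex values.
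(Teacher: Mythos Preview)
Your argument is correct and is exactly the route the paper has in mind: the corollary is stated there as a direct consequence of Theorem~\ref{thm:principio_del_maximo_DG_Van_der_Zee} together with the nodal formula \eqref{Pi_1}, and you have simply spelled out the underlying convex-combination observation. The only cosmetic point is that invoking Theorem~\ref{thm:principio_del_maximo_DG_Van_der_Zee} is unnecessary once $u^{m+1}\in[0,1]$ is taken as a hypothesis; your actual computation uses only \eqref{Pi_1} and the affine structure of $\Pc_1(\T_h)$.
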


\

\begin{theorem}[Energy law]
	\label{thm:energia_esquema_van_der_zee}
	Any solution
	of the scheme \eqref{esquema_DG_upw_Eyre_van_der_Zee} satisfies the following \textbf{discrete energy law} \begin{align}
		\label{ley_energia_discreta_van_der_zee}
		\delta_t E(\up^{m+1},n^{m+1})&+C_u\aupw{\mup_u^{m+1}}{M(u^{m+1})}{\Pi_0\mu_u^{m+1}}+C_n\aupw{\mu_n^{m+1}}{M(n^{m+1})}{\mu_n^{m+1}}\notag\\&
		+\frac{\Delta t \, \varepsilon^2}{2}\int_\Omega|\delta_t\nabla \up^{m+1}|^2
		+\frac{\Delta t}{2\delta}\int_\Omega \vert\delta_t n^{m+1}\vert^2\nonumber\\&
		+\delta P_0\int_\Omega P(u^{m+1},n^{m+1}) (\mu_n^{m+1}-\mup_u^{m+1})_\oplus ^2
		\le 0,
	\end{align}
	where the energy $E(\up,n)$ is defined in \eqref{energia_van_der_zee}.
\end{theorem}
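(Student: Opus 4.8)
The plan is to mimic the proof of Proposition~\ref{prop:energy_law_continuous} at the fully discrete level, testing the three equations of \eqref{esquema_DG_upw_Eyre_van_der_Zee} with suitable discrete functions and reassembling the terms, while carefully tracking the projection $\Pi_0$, the regularization $\Pi_1^h$ and the mass lumping. First I would take $\overline{u}=\Pi_0\mu_u^{m+1}\in\Pd_0(\T_h)$ in \eqref{eq:esquema_DG_upw_Eyre_van_der_Zee_1}, $\overline{\mu}_u=\delta_t\up^{m+1}\in\Pc_1(\T_h)$ in \eqref{eq:esquema_DG_upw_Eyre_van_der_Zee_2}, and $\overline{n}=\mu_n^{m+1}\in\Pd_0(\T_h)$ in \eqref{eq:esquema_DG_upw_Eyre_van_der_Zee_3}, rewriting $\mu_n^{m+1}$ through \eqref{eq:esquema_DG_upw_Eyre_van_der_Zee_4}; all these are admissible since $\delta_t u^{m+1},\,\tfrac1\delta n^{m+1}-\chi_0\Pi_0(\up^m)\in\Pd_0(\T_h)$ and $\delta_t\up^{m+1}\in\Pc_1(\T_h)$.

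The key preliminary identity is that the discrete time-derivative term produced by \eqref{eq:esquema_DG_upw_Eyre_van_der_Zee_1} matches exactly the left-hand side of \eqref{eq:esquema_DG_upw_Eyre_van_der_Zee_2} tested with $\delta_t\up^{m+1}$. Indeed, since $\delta_t u^{m+1}\in\Pd_0(\T_h)$, the defining property \eqref{eq:esquema_DG_Pi0} of $\Pi_0$ gives $\escalarL{\delta_t u^{m+1}}{\Pi_0\mu_u^{m+1}}=\escalarL{\delta_t u^{m+1}}{\mu_u^{m+1}}$, and then \eqref{eq:esquema_DG_Pih1} together with the linearity of $\Pi_1^h$ gives $\escalarL{\delta_t u^{m+1}}{\mu_u^{m+1}}=\escalarML{\delta_t\up^{m+1}}{\mu_u^{m+1}}$, so that $\escalarL{\delta_t u^{m+1}}{\Pi_0\mu_u^{m+1}}=\escalarML{\mu_u^{m+1}}{\delta_t\up^{m+1}}$. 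Using the right-hand side of \eqref{eq:esquema_DG_upw_Eyre_van_der_Zee_2} (tested with $\delta_t\up^{m+1}$) to replace $\escalarML{\mu_u^{m+1}}{\delta_t\up^{m+1}}$ in the tested \eqref{eq:esquema_DG_upw_Eyre_van_der_Zee_1}, and similarly using \eqref{eq:esquema_DG_Pi0} to replace $\escalarL{\delta_t n^{m+1}}{\Pi_0(\up^m)}$ by $\escalarL{\delta_t n^{m+1}}{\up^m}$ in the tested \eqref{eq:esquema_DG_upw_Eyre_van_der_Zee_3}--\eqref{eq:esquema_DG_upw_Eyre_van_der_Zee_4}, I would add the two resulting identities.

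Then I would identify each left-hand side term with a piece of $\delta_t E(\up^{m+1},n^{m+1})$ plus a nonnegative remainder, via the elementary identity $a(a-b)=\tfrac12(a^2-b^2)+\tfrac12(a-b)^2$ applied to the quadratic contributions: for $\nabla\up$ this yields $\delta_t\big(\tfrac{\varepsilon^2}{2}\normaLd{\nabla\up^{m+1}}^2\big)+\tfrac{\Delta t\,\varepsilon^2}{2}\int_\Omega|\delta_t\nabla\up^{m+1}|^2$, and for $n$ it yields $\delta_t\big(\tfrac{1}{2\delta}\normaL{n^{m+1}}^2\big)+\tfrac{\Delta t}{2\delta}\int_\Omega|\delta_t n^{m+1}|^2$; the two cross contributions $-\chi_0\escalarL{n^{m+1}}{\delta_t\up^{m+1}}$ and $-\chi_0\escalarL{\delta_t n^{m+1}}{\up^m}$ telescope \emph{exactly} to $-\chi_0\,\delta_t\escalarL{\up^{m+1}}{n^{m+1}}$, which is precisely why the old time level $\Pi_0(\up^m)$ is used in \eqref{eq:esquema_DG_upw_Eyre_van_der_Zee_4}; and the convex-splitting term is estimated from below by convexity of $F_i$ and concavity of $F_e$ on $[0,1]$ --- legitimate because $\up^{m+1},\up^m\in[0,1]$ by Theorem~\ref{thm:principio_del_maximo_DG_Van_der_Zee} and Corollary~\ref{cor:principio_del_maximo_w_DG_Van_der_Zee} --- giving $\escalarL{f(\up^{m+1},\up^m)}{\delta_t\up^{m+1}}\ge\delta_t\int_\Omega F(\up^{m+1})$, the only inequality in the whole computation. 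On the right-hand side the two upwind forms appear verbatim, and the two reaction contributions combine to $\delta P_0\escalarL{P(u^{m+1},n^{m+1})(\mu_n^{m+1}-\Pi_0\mu_u^{m+1})_\oplus}{\Pi_0\mu_u^{m+1}-\mu_n^{m+1}}=-\delta P_0\int_\Omega P(u^{m+1},n^{m+1})(\mu_n^{m+1}-\Pi_0\mu_u^{m+1})_\oplus^2$, using $w_\oplus(-w)=-w_\oplus^2$ pointwise and $P(u^{m+1},n^{m+1})\ge0$. Collecting all terms gives exactly \eqref{ley_energia_discreta_van_der_zee}.

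The main obstacle, and the only genuinely new point compared with the time-discrete Proposition~\ref{prop:ley_energia_time-discrete}, is the bookkeeping forced by the mismatch of discrete spaces: one must make the $\Pi_0$-projected potential entering the mobility and reaction terms of \eqref{eq:esquema_DG_upw_Eyre_van_der_Zee_1} consistent with the mass-lumped, $\Pi_1^h$-regularized potential term of \eqref{eq:esquema_DG_upw_Eyre_van_der_Zee_2}, which is exactly what the identity $\escalarL{\delta_t u^{m+1}}{\Pi_0\mu_u^{m+1}}=\escalarML{\mu_u^{m+1}}{\delta_t\up^{m+1}}$ settles; once this and the telescoping of the cross term are secured, the remainder is the standard convex-splitting energy computation.
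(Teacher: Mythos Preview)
Your proposal is correct and follows essentially the same approach as the paper: test \eqref{eq:esquema_DG_upw_Eyre_van_der_Zee_1}--\eqref{eq:esquema_DG_upw_Eyre_van_der_Zee_3} with $\Pi_0\mu_u^{m+1}$, $\delta_t\up^{m+1}$, $\mu_n^{m+1}$, use \eqref{eq:esquema_DG_upw_Eyre_van_der_Zee_4} against $\delta_t n^{m+1}$, and combine via the projection identities, the quadratic identity $a(a-b)=\tfrac12(a^2-b^2)+\tfrac12(a-b)^2$, the telescoping of the cross term, and the convex-splitting inequality. Your treatment of the key cancellation $\escalarL{\delta_t u^{m+1}}{\Pi_0\mu_u^{m+1}}=\escalarML{\mu_u^{m+1}}{\delta_t\up^{m+1}}$ is in fact more explicit about the mass-lumped inner product than the paper's own write-up.
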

\begin{proof}
	By taking
	$\overline{u}=\Pi_0\mu_u^{m+1}$,  $\overline{\mu}_u=\delta_t \up^{m+1}$, $\overline{n}=\mu_n^{m+1}$ in \eqref{eq:esquema_DG_upw_Eyre_van_der_Zee_1}--\eqref{eq:esquema_DG_upw_Eyre_van_der_Zee_3} and testing \eqref{eq:esquema_DG_upw_Eyre_van_der_Zee_4} by $\delta_t n^{m+1}$ we arrive at
	\begin{subequations}
		\label{esquema_DG_upw_Eyre_van_der_zee_energia}
		\begin{align}
			\label{eq:esquema_DG_upw_Eyre_van_der_zee_energia_1}
			\escalarL{\delta_t u^{m+1}}{\mup_u^{m+1}}&+C_u\aupw{\mup_u^{m+1}}{M(u^{m+1})}{\Pi_0\mu_u^{m+1}}&\notag\\&=\delta P_0 \escalarL{P(u^{m+1},n^{m+1}) (\mu_n^{m+1}-\mup_u^{m+1})_\oplus }{\mup_u^{m+1}},\\
			\label{eq:esquema_DG_upw_Eyre_van_der_zee_energia_2}
			\escalarL{\mu_u^{m+1}}{\delta_t \up^{m+1}}&=\varepsilon^2 \escalarLd{\nabla \up^{m+1}}{\delta_t\nabla \up^{m+1}}+ \escalarL{f(\up^{m+1},\up^m)}{\delta_t \up^{m+1}}\nonumber\\&\quad-\chi_0\escalarL{ n^{m+1}}{\delta_t \up^{m+1}},\\
			\label{eq:esquema_DG_upw_Eyre_van_der_zee_energia_3}
			\escalarL{\delta_t n^{m+1}}{\mu_n^{m+1}}&+C_n\aupw{\mu_n^{m+1}}{M(n^{m+1})}{\mu_n^{m+1}}&\notag\\&=-\delta P_0 \escalarL{P(u^{m+1},n^{m+1}) (\mu_n^{m+1}-\mup_u^{m+1})_\oplus }{\mu_n^{m+1}},\\
			\label{eq:esquema_DG_upw_Eyre_van_der_zee_energia_4}
			\escalarL{\mu_n^{m+1}}{\delta_t n^{m+1}}&=\frac{1}{\delta}\escalarL{n^{m+1}}{\delta_t n^{m+1}}  -\chi_0 \escalarL{\up^{m}}{\delta_t n^{m+1}},
		\end{align}
	\end{subequations}
	
	Observe that, by \eqref{eq:esquema_DG_Pi0}--\eqref{eq:esquema_DG_Pih1},
	\begin{align*}
		\escalarL{\delta_t \up^{m+1}}{\mu_u^{m+1}}&=\escalarL{\delta_t u^{m+1}}{\mu_u^{m+1}},\\
		\escalarL{\delta_t u^{m+1}}{\mu_u^{m+1}}&=\escalarL{\delta_t u^{m+1}}{\Pi_0 \mu_u^{m+1}},
	\end{align*}
	hence in particular 
	$$(\delta_t u^{m+1}, \Pi_0 \mu_u^{m+1})=(\delta_t \up^{m+1}, \mu_u^{m+1}).$$

	Then, by adding \eqref{eq:esquema_DG_upw_Eyre_van_der_zee_energia_1}--\eqref{eq:esquema_DG_upw_Eyre_van_der_zee_energia_4}, the previous terms cancel, remaining 
	\begin{align*}
		C_u\aupw{\mup_u^{m+1}}{&M(u^{m+1})}{\Pi_0\mu_u^{m+1}}
		+C_n\aupw{\mu_n^{m+1}}{M(n^{m+1})}{\mu_n^{m+1}}
		\\&+\varepsilon^2\escalarLd{\nabla \up^{m+1}}{\delta_t\nabla \up^{m+1}} +\escalarL{f(\up^{m+1},\up^m)}{\delta_t \up^{m+1}}\\
		&+\delta P_0\escalarL{P(u^{m+1},n^{m+1}) (\mu_n^{m+1}-\mup_u^{m+1})_\oplus }{\mu_n^{m+1}-\mup_u^{m+1}}\\&+\frac{1}{\delta}\escalarL{n^{m+1}}{\delta_t n^{m+1}}-\chi_0\escalarL{ n^{m+1}}{\delta_t \up^{m+1}}-\chi_0 \escalarL{\up^{m}}{\delta_t n^{m+1}}= 0 .
	\end{align*}
	Taking into account that
	\begin{align*}
	\varepsilon^2\escalarLd{\nabla \up^{m+1}}{\delta_t \nabla \up^{m+1}}
	&=\frac{\varepsilon^2}{2}\delta_t\int_\Omega |\nabla \up^{m+1}|^2 
	+\frac{\Delta t \,\varepsilon^2}{2}\int_\Omega|\delta_t\nabla \up^{m+1}|^2,\\
	\frac{1}{\delta}\escalarL{n^{m+1}}{\delta_t n^{m+1}}&=\frac{1}{2\delta}\delta_t\int_\Omega |n^{m+1}|^2 +\frac{\Delta t}{2\delta}\int_\Omega|\delta_t n^{m+1}|^2,\\
	\chi_0\delta_t\int_\Omega u^{m+1}n^{m+1}&=\chi_0\escalarL{ n^{m}}{\delta_t \up^{m+1}}+\chi_0 \escalarL{\up^{m+1}}{\delta_t n^{m+1}},\\
	\int_\Omega P(u^{m+1}, n^{m+1}) (\mu_n^{m+1}-\mup_u^{m+1})_\oplus ^2&=\escalarL{P(u^{m+1},n^{m+1}) (\mu_n^{m+1}-\mup_u^{m+1})_\oplus }{\mu_n^{m+1}-\mup_u^{m+1}},
	\end{align*}
	and by adding and subtracting $\delta_t\int_\Omega F(\up^{m+1})$,
	we get the following equality
	\begin{align*}
		\delta_t E(\up^{m+1},n^{m+1})&+C_u\aupw{\mup_u^{m+1}}{M(u^{m+1})}{\mup_u^{m+1}}+C_n\aupw{\mu_n^{m+1}}{M(n^{m+1})}{\mu_n^{m+1}}\notag
		\\&+\frac{\Delta t\,\varepsilon^2}{2}\int_\Omega|\delta_t\nabla \up^{m+1}|^2+\frac{\Delta t}{2\delta}\int_\Omega \vert\delta_t n^{m+1}\vert^2\notag\\&+\delta P_0\int_\Omega P(u^{m+1},n^{m+1}) (\mu_n^{m+1}-\mup_u^{m+1})_\oplus ^2\notag\\&=\delta_t\int_\Omega F(\up^{m+1})-\escalarL{f(\up^{m+1},\up^m)}{\delta_t \up^{m+1}}.
	\end{align*}

	Finally, from the convex-splitting approximation \eqref{convex-split} (see \cite{eyre_1998_unconditionally,guillen-gonzalez_linear_2013,acosta-soba_CH_2022}), one has  that
	$$
	\int_\Omega\delta_t F(\up^{m+1}) - \escalarL{f(\up^{m+1},\up^m)}{\delta_t( \up^{m+1})}\le0,
	$$
	which implies \eqref{ley_energia_discreta_van_der_zee}.

\end{proof}

\begin{corollary}
	The scheme \eqref{esquema_DG_upw_Eyre_van_der_Zee} is unconditionally energy stable in the sense
	$$
	E(\up^{m+1},n^{m+1})\le E(\up^{m},n^{m}),
	\quad \forall\, m\ge 0.
	$$
\end{corollary}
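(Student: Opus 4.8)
The plan is to read the corollary straight off the discrete energy law of Theorem~\ref{thm:energia_esquema_van_der_zee}: once we check that every term on the left-hand side of \eqref{ley_energia_discreta_van_der_zee} except $\delta_t E(\up^{m+1},n^{m+1})$ is nonnegative, we immediately get $\delta_t E(\up^{m+1},n^{m+1})\le 0$, and multiplying by $\Delta t>0$ gives $E(\up^{m+1},n^{m+1})\le E(\up^{m},n^{m})$. The two terms $\frac{\Delta t\,\varepsilon^2}{2}\int_\Omega|\delta_t\nabla\up^{m+1}|^2$ and $\frac{\Delta t}{2\delta}\int_\Omega|\delta_t n^{m+1}|^2$ are manifestly $\ge 0$ since $\Delta t,\varepsilon^2,\delta>0$. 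For the proliferation term $\delta P_0\int_\Omega P(u^{m+1},n^{m+1})(\mu_n^{m+1}-\mup_u^{m+1})_\oplus^2$, I would invoke Theorem~\ref{thm:principio_del_maximo_DG_Van_der_Zee}: since $u^{m+1},n^{m+1}\in[0,1]$, the definition \eqref{def:proliferation} gives $P(u^{m+1},n^{m+1})=h_{r,s}(u^{m+1})\,(n^{m+1})_\oplus\ge 0$, so with $\delta,P_0\ge 0$ and the square this term is $\ge 0$.

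The one substantive point is the nonnegativity of the two upwind contributions $C_u\aupw{\mup_u^{m+1}}{M(u^{m+1})}{\Pi_0\mu_u^{m+1}}$ and $C_n\aupw{\mu_n^{m+1}}{M(n^{m+1})}{\mu_n^{m+1}}$, which are both of the form $\aupw{\mu}{M(v)}{\mu}$ with the same argument in the second and third slots. Using the rewriting \eqref{def:aupw_barycenter} together with $\salto{\mu}=\salto{\mu}_\oplus-\salto{\mu}_\ominus$ and $\salto{\mu}_\oplus\salto{\mu}_\ominus=0$, the integrand on each interior edge $e=K\cap L$ collapses to $\salto{\mu}_\oplus^2\bigl(M^\uparrow(\vK)+M^\downarrow(\vL)\bigr)_\oplus+\salto{\mu}_\ominus^2\bigl(M^\uparrow(\vL)+M^\downarrow(\vK)\bigr)_\oplus$. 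Each mobility weight is a positive part, hence $\ge 0$ irrespective of the signs of $M^\uparrow$ and $M^\downarrow$ (recall $M^\uparrow\ge 0$ while $M^\downarrow$ may be nonpositive); combined with $\mathcal{D}_e(\T_h)>0$ and $C_u,C_n>0$, this makes both upwind terms $\ge 0$. This is precisely where the truncation by $(\cdot)_\oplus$ built into the definition \eqref{def:aupw} of $\aupw{\cdot}{\cdot}{\cdot}$ is essential, as was anticipated in the remark following that definition.

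Putting these observations into \eqref{ley_energia_discreta_van_der_zee} yields $E(\up^{m+1},n^{m+1})\le E(\up^{m},n^{m})$. To conclude that this holds for every $m\ge 0$, one notes that the hypothesis $u^{m},n^{m}\in[0,1]$ required to apply Theorems~\ref{thm:principio_del_maximo_DG_Van_der_Zee} and~\ref{thm:energia_esquema_van_der_zee} at step $m$ is propagated inductively by Theorem~\ref{thm:principio_del_maximo_DG_Van_der_Zee} itself, starting from $u^0=u_0,\ n^0=n_0\in[0,1]$. So the only thing to be careful about is not a computation but the logical chaining: the energy decay for $\up$ and $n$ rests on the pointwise bounds, which in turn rely on the positive-part treatment of $(\mu_n-\mu_u)$ and of the mobility in the scheme.
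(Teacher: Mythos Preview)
Your proof is correct and follows the same approach as the paper: both derive the corollary directly from the discrete energy law \eqref{ley_energia_discreta_van_der_zee} by checking that each of the remaining terms is nonnegative, the key point being $\aupw{\mu}{M(v)}{\mu}\ge 0$. The paper simply cites \cite{acosta-soba_KS_2022} for this last fact, whereas you spell out the edge-by-edge computation via \eqref{def:aupw_barycenter} and $\salto{\mu}=\salto{\mu}_\oplus-\salto{\mu}_\ominus$; your calculation is correct and is exactly why the outer truncation $(\cdot)_\oplus$ on the mobility combinations matters. One small remark: you do not actually need Theorem~\ref{thm:principio_del_maximo_DG_Van_der_Zee} to get $P(u^{m+1},n^{m+1})\ge 0$, since by definition $P(u,n)=h_{r,s}(u)\,n_\oplus\ge 0$ for all $u,n$; the pointwise bounds are, however, implicitly used through Corollary~\ref{cor:principio_del_maximo_w_DG_Van_der_Zee} to keep $\up^{m},\up^{m+1}\in[0,1]$ so that the convex-splitting estimate behind \eqref{ley_energia_discreta_van_der_zee} applies, and your inductive chaining remark is well placed for that reason.
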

\begin{proof}
	It is straightforward to check (see \cite{acosta-soba_KS_2022}) that $$\aupw{\mup_u}{M(u^{m+1})}{\mup_u^{m+1}}\ge 0\quad\text{and}\quad\aupw{\mu_n^{m+1}}{M(n^{m+1})}{\mu_n^{m+1}}\ge 0.$$ Hence, using \eqref{ley_energia_discreta_van_der_zee} we conclude that $\delta_t E(\up^{m+1},n^{m+1})\le 0$.
\end{proof}

\

Now, we focus on the existence of the scheme \eqref{esquema_DG_upw_Eyre_van_der_Zee}. For this, we consider the following well-known result.
\begin{theorem}[Leray-Schauder fixed point theorem]
	\label{thm:Leray-Schauder}
	Let $\X$ be a Banach space and let $T\colon\X\longrightarrow\X$ be a continuous and compact operator. If the set $$\{x\in\X\colon x=\alpha \,T(x)\quad\text{for some } 0\le\alpha\le1\}$$ is bounded (with respect to $\alpha$), then $T$ has  at least one fixed point.
\end{theorem}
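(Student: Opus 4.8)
The statement is the classical Leray--Schauder alternative (also known as Schaefer's fixed point theorem), and the plan is to reproduce its standard proof: compose $T$ with a radial retraction onto a large closed ball so as to obtain a genuine self-map with relatively compact range, apply Schauder's fixed point theorem to it, and then use the a priori bound in the hypothesis to exclude the ``boundary'' alternative. First I would use the hypothesis to fix $R>0$ such that every $x\in\X$ with $x=\alpha\,T(x)$ for some $\alpha\in[0,1]$ satisfies $\norma{x}<R$; equivalently, the set in the statement is contained in the open ball $B_R\defeq\{x\in\X:\norma{x}<R\}$. Then I would introduce the radial retraction $\varrho\colon\X\to\overline{B_R}$ given by $\varrho(x)=x$ if $\norma{x}\le R$ and $\varrho(x)=Rx/\norma{x}$ if $\norma{x}>R$ --- a Lipschitz-continuous map whose range is contained in $\overline{B_R}$ --- and set $\widetilde T\defeq\varrho\circ T$, which maps $\overline{B_R}$ into itself.

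The next step is to check that $\widetilde T$ meets the hypotheses of Schauder's fixed point theorem on the nonempty closed bounded convex set $\overline{B_R}$: it is continuous, being a composition of continuous maps, and since $T$ sends the bounded set $\overline{B_R}$ to a set with compact closure while $\varrho$ is continuous, the image $\widetilde T(\overline{B_R})=\varrho(T(\overline{B_R}))$ has compact closure as well, i.e. $\widetilde T$ is compact. Schauder's theorem then yields some $x_0\in\overline{B_R}$ with $x_0=\varrho(T(x_0))$.

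It remains to run the dichotomy. If $\norma{T(x_0)}\le R$, then $\varrho(T(x_0))=T(x_0)$, hence $x_0=T(x_0)$ and $T$ has a fixed point, as claimed. If instead $\norma{T(x_0)}>R$, then $x_0=\varrho(T(x_0))=R\,T(x_0)/\norma{T(x_0)}$, so on the one hand $\norma{x_0}=R$, and on the other hand $x_0=\alpha\,T(x_0)$ with $\alpha=R/\norma{T(x_0)}\in(0,1)$; thus $x_0$ belongs to the set in the hypothesis, which forces $\norma{x_0}<R$, a contradiction. Hence only the first alternative can occur, and the theorem follows.

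I do not expect a genuine obstacle here: the reduction above is short, and all the real content is packaged inside Schauder's fixed point theorem (and, beneath it, Brouwer's theorem together with the finite-dimensional approximation of compact operators), which is precisely why the result is invoked as ``well-known'' and why in the write-up one simply cites a standard reference. An equally standard alternative would avoid the retraction and argue via the Leray--Schauder topological degree: the homotopy $H(t,x)\defeq x-t\,T(x)$ with $t\in[0,1]$ never vanishes on $\partial B_R$ (a zero would give $x=t\,T(x)$, hence $\norma{x}<R$), so homotopy invariance gives $\deg(I-T,B_R,0)=\deg(I,B_R,0)=1\neq0$, which already produces a zero of $I-T$ in $B_R$. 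In either route the only step specific to this statement --- turning a global a priori bound into an honest fixed point --- is elementary; this theorem will then serve as the tool for establishing existence of solutions of the fully discrete scheme \eqref{esquema_DG_upw_Eyre_van_der_Zee}.
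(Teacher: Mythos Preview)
Your proof is correct and follows the standard Schaefer/Leray--Schauder argument via radial retraction onto a large ball and Schauder's theorem; the alternative degree-theoretic route you sketch is equally valid. Note, however, that the paper does not prove this statement at all: it is introduced only as ``the following well-known result'' and then invoked as a black box in the existence proof of Theorem~\ref{thm:existencia_solucion_Van_der_Zee_DG-UPW}, so there is no proof in the paper to compare against---your write-up simply supplies what the paper omits.
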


\begin{theorem}[Existence]
	\label{thm:existencia_solucion_Van_der_Zee_DG-UPW}
	There is at least one solution of the scheme
	\eqref{esquema_DG_upw_Eyre_van_der_Zee}.
\end{theorem}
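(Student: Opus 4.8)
The plan is to apply the Leray--Schauder fixed point theorem (Theorem~\ref{thm:Leray-Schauder}) on a finite-dimensional space. Since $u^{m+1}, n^{m+1}\in\Pd_0(\T_h)$ and $\mu_u^{m+1}\in\Pc_1(\T_h)$, the natural choice is $\X=\Pd_0(\T_h)\times\Pc_1(\T_h)\times\Pd_0(\T_h)$, a finite-dimensional Banach space, on which all norms are equivalent and continuous maps from bounded sets are automatically compact. First I would set up a linearized/decoupled solution operator: given a guess $(\tilde u,\tilde\mu_u,\tilde n)\in\X$, define $T(\tilde u,\tilde\mu_u,\tilde n)=(u,\mu_u,n)$ as the solution of the scheme \eqref{esquema_DG_upw_Eyre_van_der_Zee} where the genuinely nonlinear coefficients are frozen at the tilde values --- concretely, freeze $M(u^{m+1})$, $M(n^{m+1})$, $P(u^{m+1},n^{m+1})$ and the cutoff $(\mu_n^{m+1}-\Pi_0\mu_u^{m+1})_\oplus$ at $(\tilde u,\tilde\mu_u,\tilde n)$, while keeping $u^{m+1},\mu_u^{m+1},n^{m+1}$ linear in the remaining occurrences. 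One must check this linear system is solvable and that $T$ is well-defined and continuous; solvability follows because the associated bilinear form, after testing as in the energy estimate, is coercive on $\X$ (the diffusion terms $\varepsilon^2\|\nabla\Pi_1^h u\|^2$, the mass-lumping term, and the $\frac1{2\delta}\|n\|^2$ term together with the implicit $F_i'$ contribution $\frac38 u^2$ control all unknowns, since $\chi_0 un$ can be absorbed by Young's inequality using the $\frac1{2\delta}n^2$ term).

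Next I would carry out the a priori bound on the fixed-point set: suppose $(u,\mu_u,n)=\alpha T(u,\mu_u,n)$ for some $\alpha\in[0,1]$. For $\alpha=0$ the bound is trivial; for $\alpha\in(0,1]$ one rewrites this as $(u,\mu_u,n)$ solving a scheme identical to \eqref{esquema_DG_upw_Eyre_van_der_Zee} except the data terms (those coming from time level $m$) are multiplied by $\alpha$. Then I repeat essentially the computation of Theorem~\ref{thm:energia_esquema_van_der_zee}: test \eqref{eq:esquema_DG_upw_Eyre_van_der_Zee_1} with $\Pi_0\mu_u^{m+1}$, \eqref{eq:esquema_DG_upw_Eyre_van_der_Zee_2} with $\delta_t\Pi_1^h u^{m+1}$, \eqref{eq:esquema_DG_upw_Eyre_van_der_Zee_3} with $\mu_n^{m+1}$, and \eqref{eq:esquema_DG_upw_Eyre_van_der_Zee_4} with $\delta_t n^{m+1}$, using that $\aupw{\cdot}{M(\cdot)}{\cdot}\ge0$ on the diagonal and that the proliferation term has a sign. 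This yields a bound on $E(\Pi_1^h u^{m+1},n^{m+1})$, hence on $\|\nabla\Pi_1^h u^{m+1}\|_{L^2}$ and $\|n^{m+1}\|_{L^2}$, uniformly in $\alpha$ (the $\alpha$-weighted data only helps). A Poincaré-type inequality combined with mass conservation (which also holds with the $\alpha$-weighting since testing with $1$ kills the weighted terms) then bounds $\|\Pi_1^h u^{m+1}\|_{L^2}$ and hence $\|u^{m+1}\|_{L^2}$ via \eqref{Pi_1}; finally $\mu_u^{m+1}$ is bounded by testing \eqref{eq:esquema_DG_upw_Eyre_van_der_Zee_2} with $\mu_u^{m+1}$ and using the already-obtained bounds plus the boundedness of $F'$ on the relevant range. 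In finite dimensions these $L^2$ bounds are bounds in $\X$.

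The main obstacle I anticipate is \emph{continuity of the solution operator} $T$, rather than the a priori bound. The nonlinearities $M=h_{p,q}$, $M^\uparrow$, $M^\downarrow$, the positive part $(\cdot)_\oplus$, and $P(u,n)=h_{r,s}(u)n_\oplus$ are all Lipschitz continuous functions of their arguments (polynomials composed with $\max\{\cdot,0\}$ and cutoffs), so freezing them at $(\tilde u,\tilde\mu_u,\tilde n)$ produces coefficients depending continuously on the tilde data; combined with uniform coercivity of the linear system (coercivity constant independent of the frozen coefficients, since $M\ge0$ and the sign of the proliferation term is used), standard linear-algebra perturbation arguments give continuity of $T$. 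Compactness is then free by finite-dimensionality. Once continuity, compactness, and the bounded fixed-point set are in hand, Theorem~\ref{thm:Leray-Schauder} delivers a fixed point, which is by construction a solution of \eqref{esquema_DG_upw_Eyre_van_der_Zee}. I would also remark that the pointwise bounds $u^{m+1},n^{m+1}\in[0,1]$ are then inherited from Theorem~\ref{thm:principio_del_maximo_DG_Van_der_Zee}, so the solution is consistent with the hypotheses needed to advance to the next time step.
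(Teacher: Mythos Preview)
Your overall framework---Leray--Schauder on the finite-dimensional space $\X=\Pd_0(\T_h)\times\Pc_1(\T_h)\times\Pd_0(\T_h)$, with a linearized operator $T$ and compactness for free---matches the paper. Two points differ, however, and one of them is a genuine simplification you are missing.

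\textbf{Definition and solvability of $T$.} The paper freezes the same nonlinear coefficients you do, but observes that the resulting linear system is \emph{decoupled}: from $(\widehat u,\widehat\mu_u,\widehat n)$ one first computes $\mu_n$ explicitly via \eqref{eq:esquema_DG_upw_Eyre_van_der_Zee_4}, then $u$ and $n$ from \eqref{eq:esquema_DG_upw_Eyre_van_der_Zee_1} and \eqref{eq:esquema_DG_upw_Eyre_van_der_Zee_3} (the right-hand sides involve only hat quantities, so these are just explicit formulas), and finally $\mu_u$ from \eqref{eq:esquema_DG_upw_Eyre_van_der_Zee_2}. No coercivity argument is needed; well-posedness of $T$ is immediate. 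Your coercivity route would also work but is unnecessary.

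\textbf{The a priori bound.} Here you take a genuinely different route. The paper does \emph{not} repeat the energy computation of Theorem~\ref{thm:energia_esquema_van_der_zee}; instead it observes that the $\alpha$-weighted fixed-point equations have exactly the same structure exploited in Theorem~\ref{thm:principio_del_maximo_DG_Van_der_Zee}, so the maximum-principle argument gives $0\le u,n\le 1$ directly. This immediately bounds $u$ and $n$ in any norm, and then $\mu_u$ is bounded by testing \eqref{eq:esquema_DG_upw_Eyre_van_der_Zee_2} with $\mu_u$. Your energy route is more delicate: first, your description ``data terms multiplied by $\alpha$'' is not accurate---the upwind and proliferation terms also carry a factor $\alpha$, and the potential term becomes $\alpha f(\Pi_1^h(u/\alpha),\Pi_1^h z_u)$, which is not simply $f$ with scaled data; second, the energy $E(u,n)$ is not obviously coercive because of the cross term $-\chi_0 un$, so you must argue via Young's inequality and the quartic growth of $F$; third, mass conservation only bounds $\int_\Omega(u+n)$, so bounding $\int_\Omega u$ separately needs the already-obtained $L^2$ bound on $n$. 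All of this can be made to work, but it is considerably more involved than the paper's one-line appeal to the pointwise bounds. Since you already mention Theorem~\ref{thm:principio_del_maximo_DG_Van_der_Zee} at the end, you might as well use it as the main tool for the a priori estimate.
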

\begin{proof}
	Given two functions $z_u, z_n\in\Pd_0(\T_h)$ with $0\le z_u, z_n \le 1$, we define the map
	$$T\colon \Pd_0\times \Pc_1\times\Pd_0\longrightarrow\Pd_0\times \Pc_1\times\Pd_0$$ such that $$T(\widehat{u},\widehat{\mu}_u,\widehat{n})=(u,\mu_u,n)\in\Pd_0(\T_h)\times\Pc_1(\T_h)\times\Pd_0(\T_h)$$ is the unique solution of the linear (and decoupled, computing first $\mu_n$, next $n$ and $u$, and finally $\mu_u$) scheme:
	\begin{subequations}
		\label{esquema_lineal_Leray-Schauder_DG_upw_Eyre_Van_der_Zee}
		\begin{align}
			\label{eq:esquema_lineal_Leray-Schauder_DG_upw_Eyre_Van_der_Zee_1}
			\frac{1}{\Delta t}\escalarL{u-z_u}{\overline{u}}&
			=-C_u\aupw{\Pi_0\widehat{\mu}}{M(\widehat{u})}{\overline{u}}+\delta P_0 \escalarL{P(\widehat{u},\widehat{n}) ( \mu_n-\Pi_0\widehat \mu_u)_\oplus }{\overline{u}},&&\forall\overline{u}\in \Pd_0(\T_h),\\
			\label{eq:esquema_lineal_Leray-Schauder_DG_upw_Eyre_Van_der_Zee_2}
			\escalarML{\mu_u}{\overline{\mu}_u}&=\varepsilon^2 \escalarL{\nabla \up}{\nabla\overline{\mu}_u}+ \escalarL{f(\up,\Pi_1^h z_u)}{\overline{\mu}_u}-\chi_0\escalarL{n}{\overline{\mu}_u},&&\forall\overline{\mu}_u\in \Pc_1(\T_h),\\
			\label{eq:esquema_lineal_Leray-Schauder_DG_upw_Eyre_Van_der_Zee_3}
			\frac{1}{\Delta t}\escalarL{ n-z_n}{\overline{n}}&=-C_n\aupw{\mu_n}{M(\widehat n)}{\overline{n}}-\delta P_0\escalarL{P(\widehat u, \widehat n) (\mu_n-\Pi_0\widehat\mu_u)_\oplus }{\overline{n}},&&\forall\overline{n}\in \Pd_0(\T_h),
		\end{align}
	\end{subequations}
	where
	\begin{equation}
		\label{eq:esquema_lineal_Leray-Schauder_DG_upw_Eyre_Van_der_Zee_4}
			\mu_n=\frac{1}{\delta}\widehat n -\chi_0 \Pi_0(\Pi_h^1 z_u).
	\end{equation}
	
	It is straightforward to check that, for any given $(\widehat{u},\widehat{\mu}_u,\widehat{n})\in \Pd_0(\T_h)\times \Pc_1(\T_h)\times \Pd_0(\T_h)$, there is a unique solution $(u,\mu_u,n)\in \Pd_0(\T_h)\times \Pc_1(\T_h)\times \Pd_0(\T_h)$. Therefore, the operator $T$ is well defined.

	Now we will prove that  $T$ is under the hypotheses of the Leray-Schauder fixed-point theorem \ref{thm:Leray-Schauder}.
	
	First, we check that $T$ is continuous. Let $\{(\widehat{u}_j,\widehat{\mu}_{u_j},\widehat{n}_j)\}_{j\in\N}\subset\Pd_0(\T_h)\times\Pc_1(\T_h)\times\Pd_0(\T_h)$ be a sequence such that $\lim_{j\to\infty}(\widehat{u}_j,\widehat{\mu}_{u_j},\widehat{n}_j)=(\widehat{u},\widehat{\mu}_u,\widehat{n})$. Taking into account that all norms are equivalent in $\Pd_0(\T_h)$ since it is a finite-dimensional space, the convergences $\widehat u_j\to \widehat u$ and $\widehat n_j\to \widehat n$ are equivalent to the convergences elementwise $ (\widehat u_j)_K\to \widehat n_K$ and $ (\widehat n_j)_K\to \widehat n_K$ for every $K\in\T_h$ (this may be seen, for instance, by using the norm $\norma{\cdot}_{L^\infty(\Omega)}$). Moreover, since $\Pi_0$ is continuous and $\Pi_0\widehat\mu_u\in\Pd_0(\T_h)$, the convergence $\Pi_0\widehat \mu_{u_j}\to  \Pi_0\widehat\mu_u$ is also equivalent to the convergence elementwise $ (\Pi_0\widehat\mu_{u_j})_K\to (\Pi_0\widehat\mu_{u})_K$ for every $K\in\T_h$. Finally, taking limits when $j\to \infty$ in \eqref{esquema_lineal_Leray-Schauder_DG_upw_Eyre_Van_der_Zee} (with $\widehat{u}\defeq\widehat{u}_j$, $\widehat{\mu}_u\defeq\widehat{\mu}_{u_j}$, $\widehat{n}\defeq\widehat{n}_j$ and $(u_j,\mu_{u_j},n_j)\defeq T(\widehat u_j,\widehat\mu_{u_j},\widehat n_j)$),
	and using the notion of convergence elementwise, we get that $$\lim_{j\to \infty} T(\widehat u_j,\widehat\mu_j, \widehat n_j)=T(\widehat u,\widehat\mu, \widehat n)=T\left(\lim_{j\to \infty}(\widehat u_j,\widehat\mu_j, \widehat n_j)\right),$$ hence $T$ is continuous. Therefore, $T$ is also compact since $\Pd_0(\T_h)$ and $\Pc_1(\T_h)$ have finite dimension.
	
	Finally, let us prove that the set 
	\begin{multline*}
		B=\{(u,\mu_u,n)\in\Pd_0\times\Pc_1\times\Pd_0\colon 
		(u,\mu_u,n)=\alpha\, T(u,\mu_u,n)\text{ for some } 0\le\alpha\le1\}
	\end{multline*}
	is bounded (independent of $\alpha$). The case $\alpha=0$ is trivial so we will assume that $\alpha\in(0,1]$.
	
	If $(u,\mu_u,n)\in B$, then $(u,\mu_u,n)\in\Pd_0(\T_h)\times \Pc_1(\T_h)\times\Pd_0(\T_h)$ is the solution of
	\begin{align}
		\label{eq:esquema_DG_upw_Eyre_Van_der_Zee_Leray-Schauder_1}
		\frac{1}{\Delta t}\escalarL{u-\alpha z_u}{\overline{u}}
		&=-\alpha \, C_u\,\aupw{\Pi_0\mu_u}{M({u})}{\overline{u}}\nonumber\\&\quad+\alpha\,\delta P_0 \escalarL{P(u,n) (\mu_n-\Pi_0\mu_u)_\oplus }{\overline{u}},
		&&\forall\overline{u}\in \Pd_0(\T_h),\\
		\label{eq:esquema_DG_upw_Eyre_Van_der_Zee_Leray-Schauder_2}
		\escalarML{\mu_u}{\overline{\mu}_u}&=\varepsilon^2 \escalarL{\nabla \up}{\nabla\overline{\mu}_u}+ \escalarL{f(\up,\Pi_1^hz_u)}{\overline{\mu}_u}-\chi_0\escalarL{n}{\overline{\mu}_u},&&\forall\overline{\mu}_u\in \Pc_1(\T_h),\\
		\label{eq:esquema_DG_upw_Eyre_Van_der_Zee_Leray-Schauder_3}
		\frac{1}{\Delta t}\escalarL{ n-\alpha z_n}{\overline{n}}&=-\alpha \, C_n\,\aupw{\mu_n}{M( n)}{\overline{n}}\nonumber\\&\quad-\alpha\,\delta P_0\escalarL{P(u,n) (\mu_n-\Pi_0\mu_u)_\oplus }{\overline{n}},&&\forall\overline{n}\in \Pd_0(\T_h)
	\end{align}
	where
	\begin{equation}
		\label{eq:esquema_DG_upw_Eyre_Van_der_Zee_Leray-Schauder_4}
		\mu_n=\frac{1}{\delta}n -\chi_0 \Pi_0(\Pi_h^1 z_u).
	\end{equation}
	Now, testing \eqref{eq:esquema_DG_upw_Eyre_Van_der_Zee_Leray-Schauder_1} by $\overline u=1$ and \eqref{eq:esquema_DG_upw_Eyre_Van_der_Zee_Leray-Schauder_3} by $\overline n=1$, we obtain $$\int_\Omega (u+n)=\alpha \int_\Omega (z_u+z_n).$$ Moreover, since $0\le z_u, z_n\le 1$, it can be proved that $0\le u, n\le 1$ using the same arguments than in Theorem \ref{thm:principio_del_maximo_DG_Van_der_Zee}. Therefore, we arrive at $$\norma{u}_{L^1(\Omega)}+\norma{n}_{L^1(\Omega)}\le \norma{z_u}_{L^1(\Omega)}+\norma{z_n}_{L^1(\Omega)}.$$
	Hence, using properties of $\up\in\Pc_1(\T_h)$, since $0\le u\le1$ then $0\le \up\le1$, and 
	$$
	\norma{\up}_{L^1(\Omega)}=\norma{u}_{L^1(\Omega)}\le \norma{z_u}_{L^1(\Omega)}+\norma{z_n}_{L^1(\Omega)}.
	$$
	
	Now, we will check that $\mu_u$ is bounded. Testing 
	\eqref{eq:esquema_DG_upw_Eyre_Van_der_Zee_Leray-Schauder_2} with $\overline{\mu}_u=\mu_u$ we obtain that
	$$
		\normaL{\mu_u}^2
\le \varepsilon^2\norma{\up}_{H^1(\Omega)}\norma{\mu_u}_{H^1(\Omega)}+\normaL{f(\up,\Pi_1^hz_u)}\normaL{\mu_u}+\normaL{n}\normaL{\mu_u}.
$$
	The norms are equivalent in the finite-dimensional space $\Pc_1(\T_h)$, therefore, there are $K_1,K_2\ge 0$ such that
	$$\normaL{\mu_u}\le \varepsilon^2K_1\norma{\up}_{L^1(\Omega)}+\normaL{f(\up,\Pi_1^hz_u)}+K_2\norma{n}_{L^1(\Omega)}.$$
	Consequently, since $\normaL{f(\up,\Pi_1^hz_u)}$ is bounded due to $0\le \up,\Pi_1^h z_u\le 1$ and $\norma{\up}_{L^1(\Omega)}$ and $\norma{n}_{L^1(\Omega)}$ are also bounded, we conclude that $\normaL{\mu_u}$ is bounded.

	Since $\Pd_0(\T_h)$ and $\Pc_1(\T_h)$ are finite-dimensional spaces where all the norms are equivalent, we have proved that $B$ is bounded.
	
	Thus, using the Leray-Schauder fixed point theorem \ref{thm:Leray-Schauder}, there is a solution $(u,\mu_u,n)$ of the scheme
	\eqref{esquema_DG_upw_Eyre_van_der_Zee}.
\end{proof}

\section{Numerical experiments}
\label{sec:numer-experiments}

Now, we will present several numerical experiments that match the results presented in the previous section. We assume that $\Omega=[-10,10]^2$, $\varepsilon=0.1$, $\delta=0.01$ and we consider the mesh is shown in Figure~\ref{fig:mesh} which satisfies the Hypothesis~\ref{hyp:mesh}. The nonlinear coupled scheme \eqref{esquema_DG_upw_Eyre_van_der_Zee} is approximated by Newton's method.
\begin{figure}
	\centering
	\includegraphics[scale=0.24]{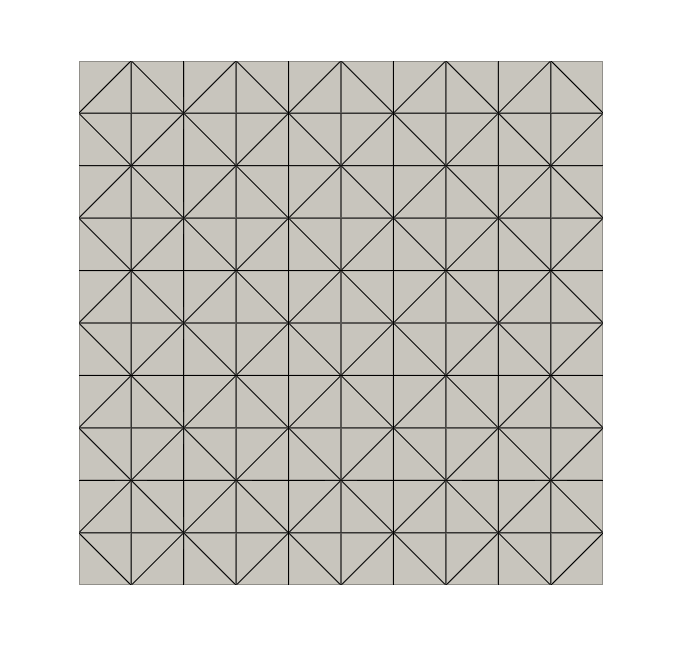}

	\caption{Mesh used for domain discretization.}
	\label{fig:mesh}
\end{figure}

These results have been computed using the Python interface of the library \texttt{FEniCSx}, \cite{AlnaesEtal2014, ScroggsEtal2022,BasixJoss}, and the figures have been plotted using \texttt{PyVista}, \cite{sullivan2019pyvista}.

Notice that, as mentioned in Subsection \ref{sec:fully_discrete_scheme}, $\up^{m}$ is considered the approximation of the phase filed variable $u$ by the scheme \eqref{esquema_DG_upw_Eyre_van_der_Zee}. Therefore, all the results shown in this section correspond with this approximation. On the other hand, although $n^m$ is taken as the approximation of the nutrients variable $n$, for the ease of visualization, $\Pi_1^h n^m$ has been plotted in Figures~\ref{fig:test-1_initial_cond}, \ref{fig:test-1_1}, \ref{fig:test-1_2}, \ref{fig:test-2_initial_cond} and \ref{fig:test-2_reference}. 

\subsection{Three tumors aggregation}
\label{sec:numer-experiments_1}
We define the following initial conditions which are of the same type than those in \cite{wu_stabilized_2014}:
\begin{align*}
	u_0 &= \frac{1}{2}\left[\tanh\left(\frac{1 - \sqrt{(x- 2)^2 + (y - 2)^2}}{\sqrt{2}\varepsilon}\right)
	+ \tanh\left(\frac{1 - \sqrt{(x - 3)^2 + (y + 5)^2}}{\sqrt{2}\varepsilon}\right)\right.\\&\quad\left.
	+ \tanh\left(\frac{1.73 - \sqrt{(x + 1.5)^2 + (y + 1.5)^2}}{\sqrt{2}\varepsilon}\right) + 3\right],\\
	n_0 &= 1.0 - u_0.
\end{align*}
These initial conditions are shown in Figure \ref{fig:test-1_initial_cond}. As one may observe, we assume that, at the beginning, the nutrients are fully consumed in the area occupied by the initial tumor.

\begin{figure}
	\centering
	\begin{tabular}{cc}
		\hspace*{-1.1cm}$\boldsymbol{u_0}$ & \hspace*{-1.1cm}$\boldsymbol{n_0}$\\
		\includegraphics[scale=0.24]{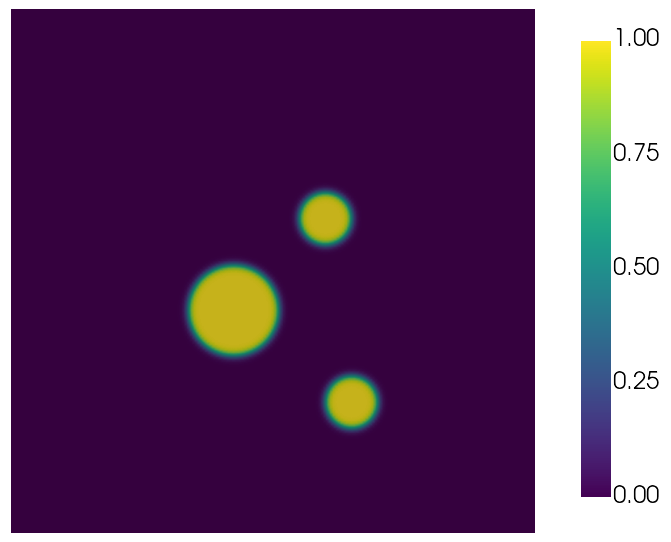} &
		\includegraphics[scale=0.24]{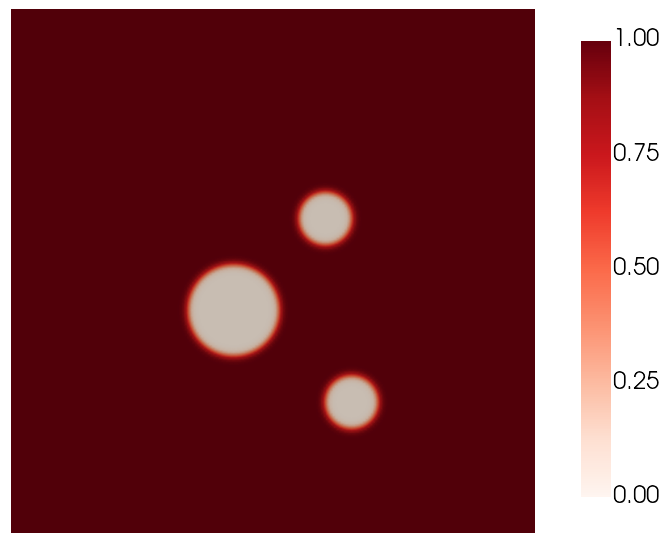}
	\end{tabular}
	\caption{Initial conditions for test \ref{sec:numer-experiments_1} ($u_0$ left, $n_0$ right).}
	\label{fig:test-1_initial_cond}
\end{figure}

Moreover, we set $C_u=100$, $C_n=100\cdot 10^{-4}$, $P_0=125$  $h\approx 0.14$ and we use the following symmetric mobility and proliferation functions:
\begin{equation}
	\label{symmetric_functions}
	M(v)=h_{1,1}(v),\quad P(u,n)=h_{1,1}(u)n_\oplus.
\end{equation}
We are going to compare the upwind DG scheme \eqref{esquema_DG_upw_Eyre_van_der_Zee} and the $\Pc_1(\T_h)$-FE approximation of the time discrete scheme \eqref{esquema:time_discrete}. We consider two different cases: $\chi_0=0$ and $\chi_0=10$, i.e. without and with cross-diffusion, respectively.

On the one hand, the experiment without cross-diffusion ($\chi_0=0$ and $\Delta t=10^{-5}$) is plotted in Figure~\ref{fig:test-1_1}. As one may notice, both schemes provide a similar approximation. The approximations preserve, approximately in the case of FE, the pointwise bounds of the variables $u$ and $n$ and the energy stability, see Figures~\ref{fig:test-1_1_bounds} and \ref{fig:test-1_energy} (left).

\begin{figure}
	\centering
	\begin{tabular}{ccccc}
		& &\hspace*{-1cm}$t=2.5\cdot 10^{-2}$ & \hspace*{-1cm}$t=5.0\cdot 10^{-2}$ & \hspace*{-1cm}$t=7.5\cdot 10^{-2}$ \\
		\multirow{3}{*}{\vspace*{-2.7cm}$\boldsymbol{u}$}&\rotatebox[origin=c]{90}{\textbf{DG}} &
		\raisebox{-0.47\height}{\includegraphics[scale=0.204]{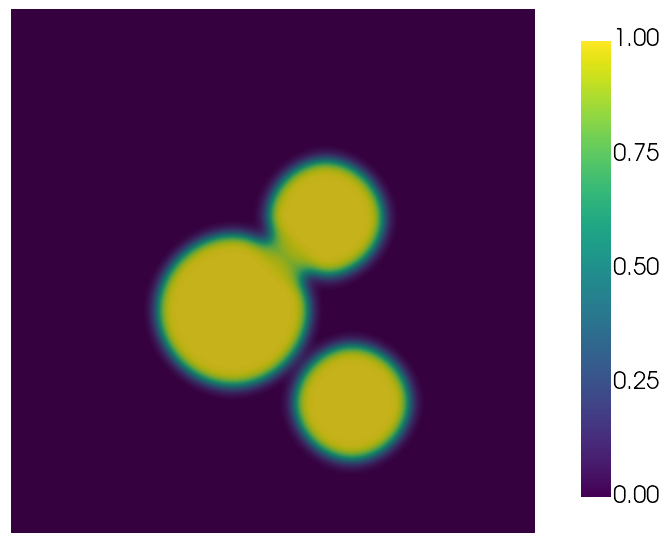}} &
		\raisebox{-0.47\height}{\includegraphics[scale=0.204]{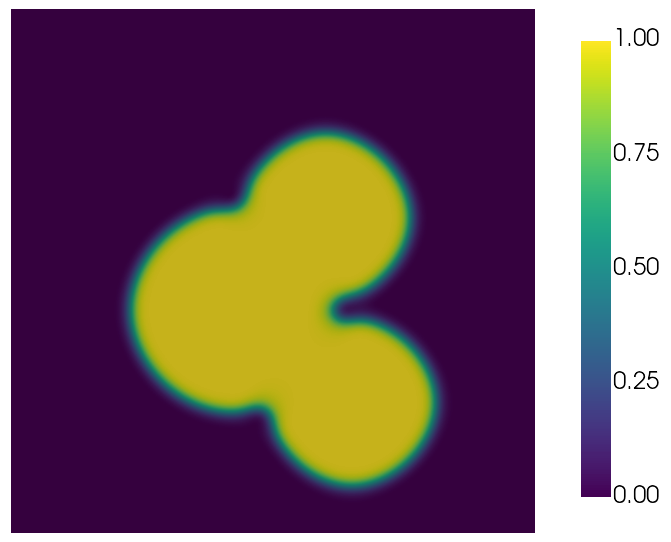}} &
		\raisebox{-0.47\height}{\includegraphics[scale=0.204]{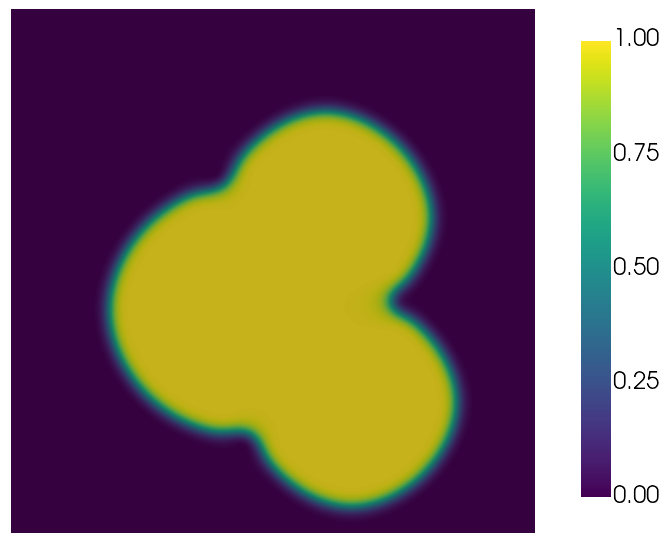}} \\
		&\rotatebox[origin=c]{90}{\textbf{FE}} &
		\raisebox{-0.47\height}{\includegraphics[scale=0.204]{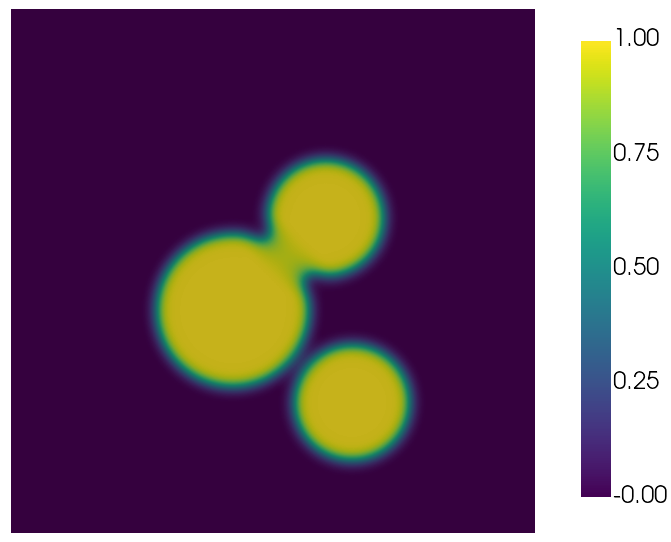}} &
		\raisebox{-0.47\height}{\includegraphics[scale=0.204]{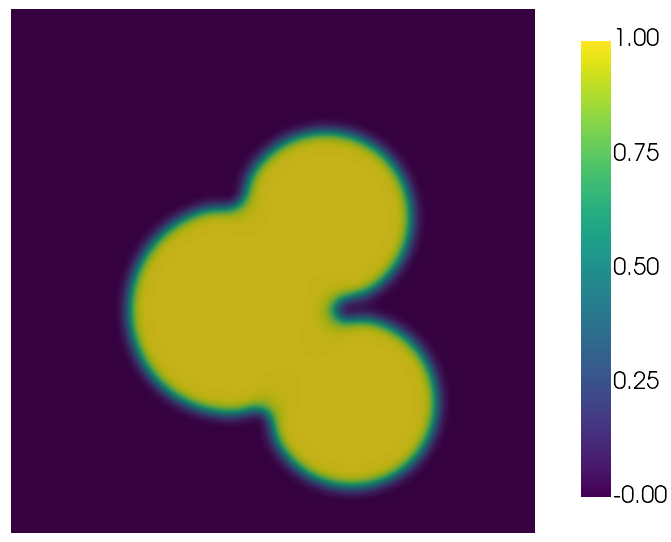}} &
		\raisebox{-0.47\height}{\includegraphics[scale=0.204]{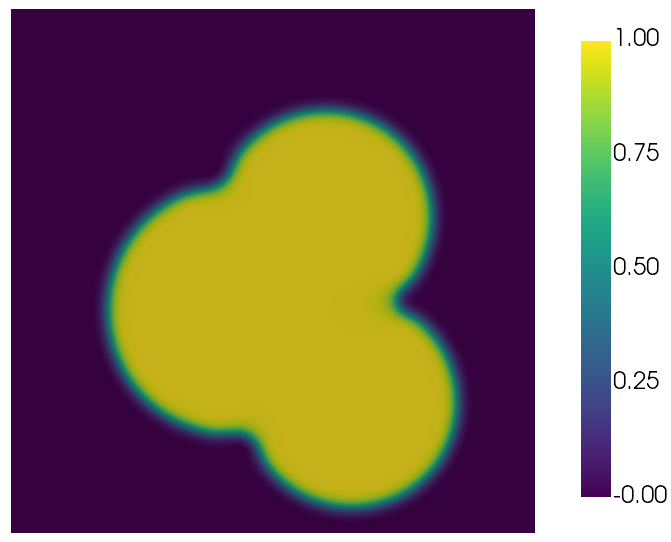}}\\
		\multirow{3}{*}{\vspace*{-2.7cm}$\boldsymbol{n}$}&\rotatebox[origin=c]{90}{\textbf{DG}} &
		\raisebox{-0.47\height}{\includegraphics[scale=0.204]{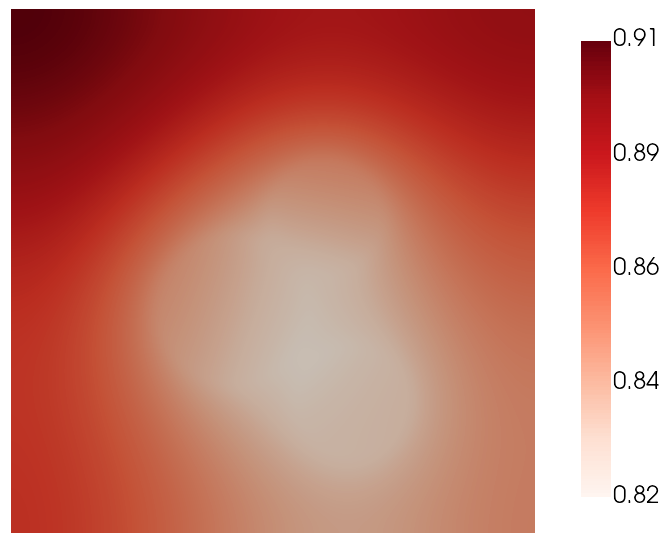}} &
		\raisebox{-0.47\height}{\includegraphics[scale=0.204]{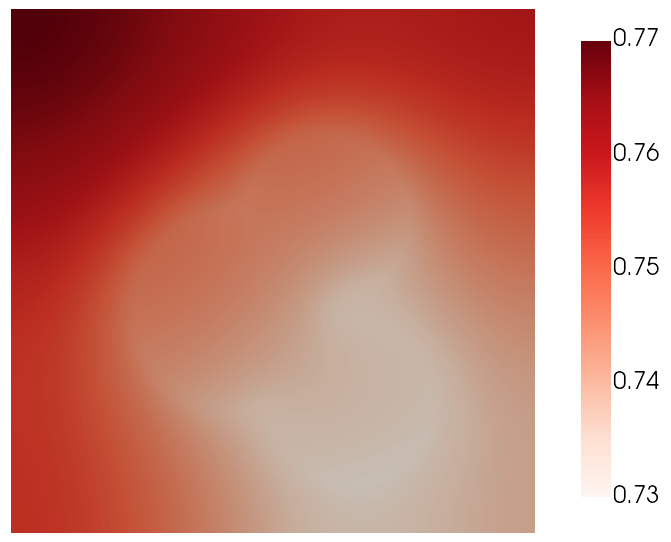}} &
		\raisebox{-0.47\height}{\includegraphics[scale=0.204]{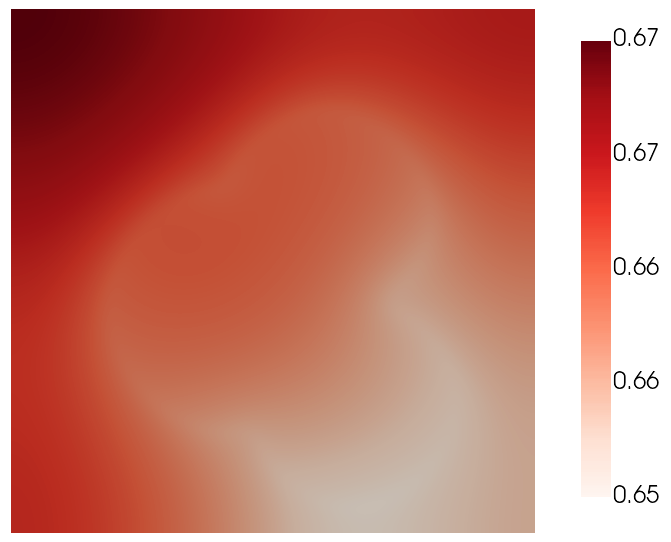}} \\
		&\rotatebox[origin=c]{90}{\textbf{FE}} &
		\raisebox{-0.47\height}{\includegraphics[scale=0.204]{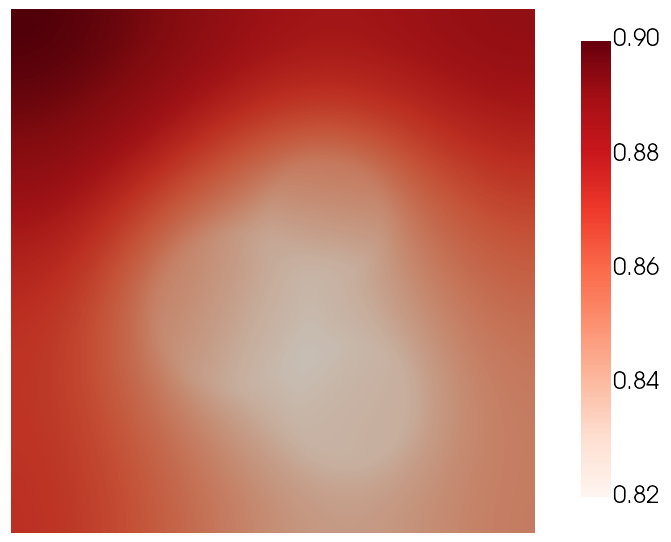}} &
		\raisebox{-0.47\height}{\includegraphics[scale=0.204]{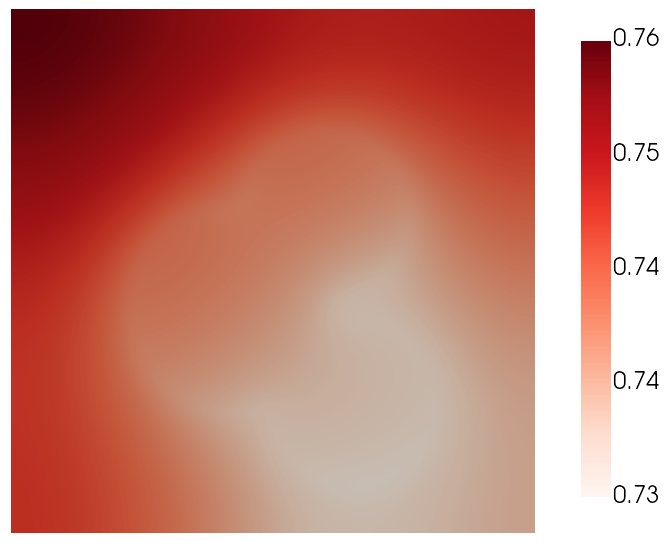}} &
		\raisebox{-0.47\height}{\includegraphics[scale=0.204]{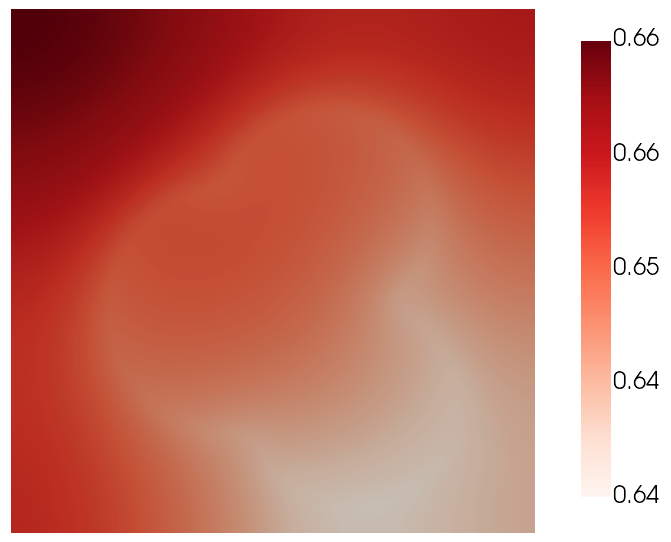}}
	\end{tabular}
	\caption{Tumor and nutrients for test \eqref{sec:numer-experiments_1} with $\chi_0=0$ at different time steps.}
	\label{fig:test-1_1}
\end{figure}

On the other hand, the test with cross-diffusion ($\chi_0=10$ and $\Delta t=5\cdot 10^{-6}$) is plotted in Figures~\ref{fig:test-1_2}. In this case, one may notice that, while DG scheme provides a good approximation of the solution, FE solution shows a lot of spurious oscillations. These numerical instabilities lead to a loss of the maximum principle while it is preserved by the DG scheme, see Figure~\ref{fig:test-1_2_bounds}. In both cases, the schemes preserve the energy stability of the model as expected, see Figure~\ref{fig:test-1_energy} (right).

\begin{figure}
	\centering
	\begin{tabular}{ccccc}
		& & \hspace*{-1cm}$t=7.5\cdot 10^{-3}$ & \hspace*{-1cm}$t=1.5\cdot 10^{-2}$ & \hspace*{-1cm}$t=4\cdot 10^{-2}$ \\
		\multirow{3}{*}{\vspace*{-2.7cm}$\boldsymbol{u}$}&\rotatebox[origin=c]{90}{\textbf{DG}} &
		\raisebox{-0.47\height}{\includegraphics[scale=0.204]{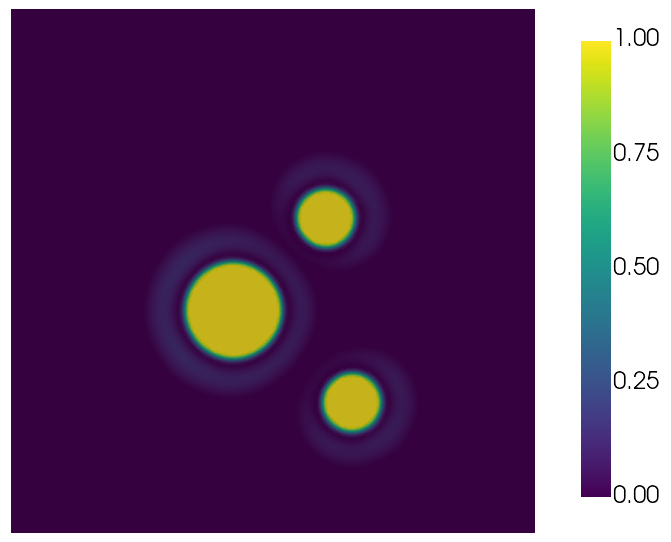}} &
		\raisebox{-0.47\height}{\includegraphics[scale=0.204]{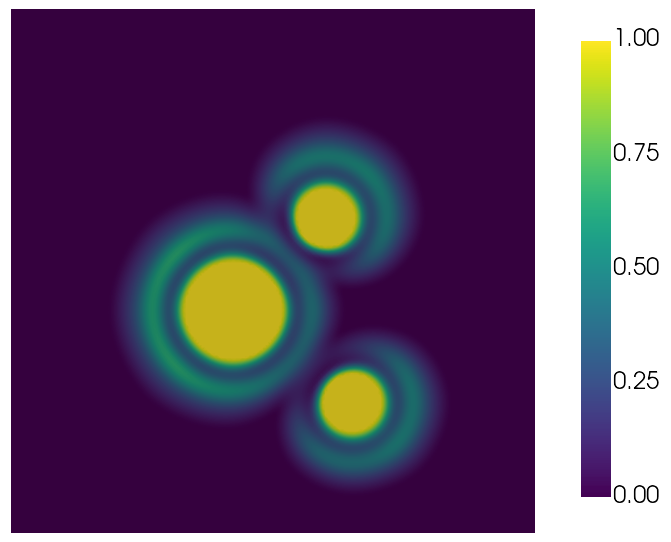}} &
		\raisebox{-0.47\height}{\includegraphics[scale=0.204]{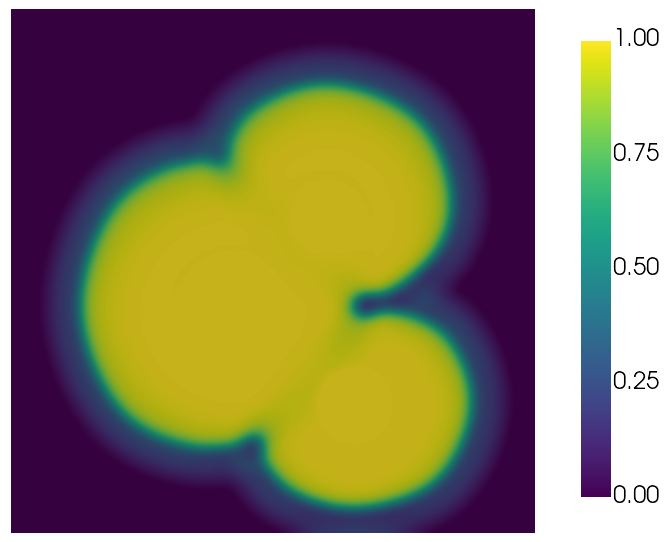}} \\
		&\rotatebox[origin=c]{90}{\textbf{FE}} &
		\raisebox{-0.47\height}{\includegraphics[scale=0.204]{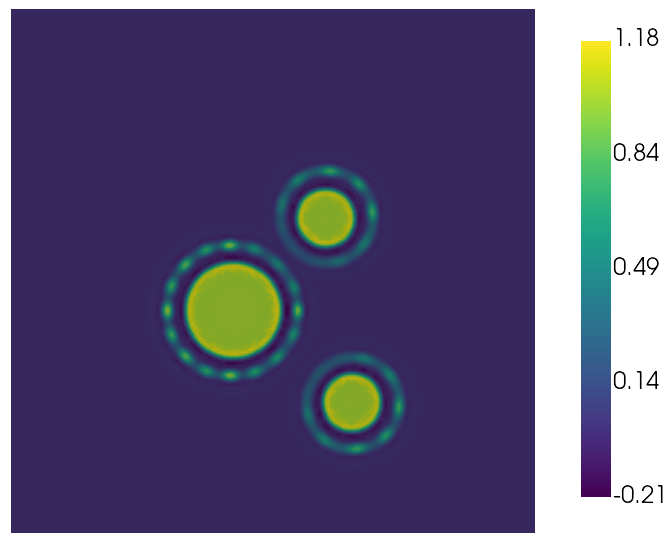}} &
		\raisebox{-0.47\height}{\includegraphics[scale=0.204]{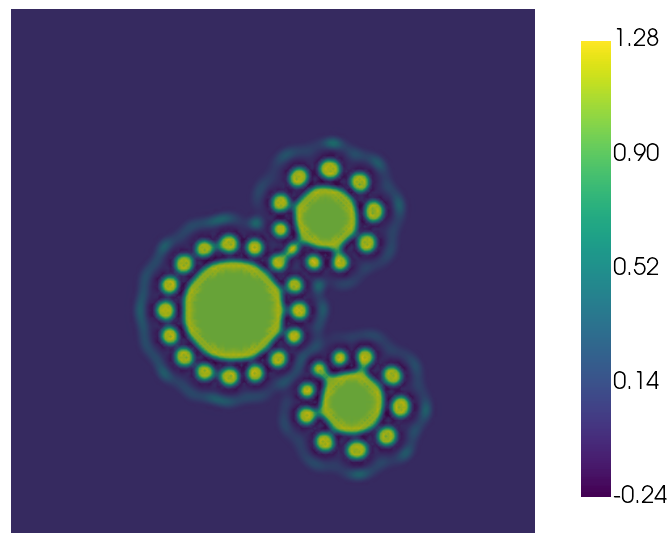}} &
		\raisebox{-0.47\height}{\includegraphics[scale=0.204]{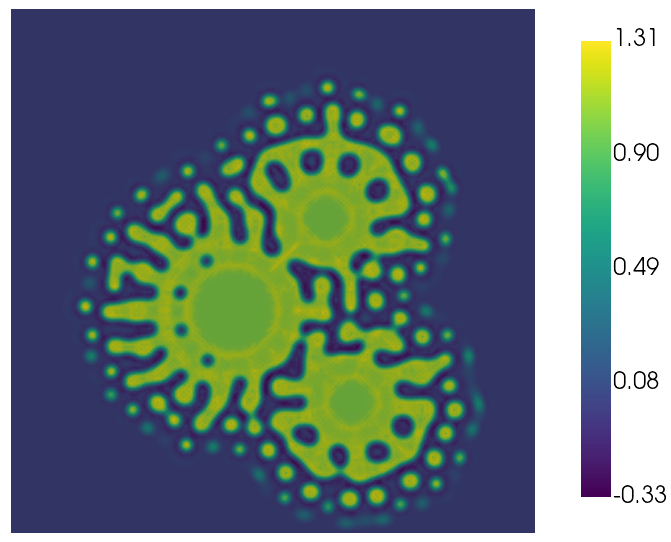}}\\
		\multirow{3}{*}{\vspace*{-2.7cm}$\boldsymbol{n}$}&\rotatebox[origin=c]{90}{\textbf{DG}} &
		\raisebox{-0.47\height}{\includegraphics[scale=0.204]{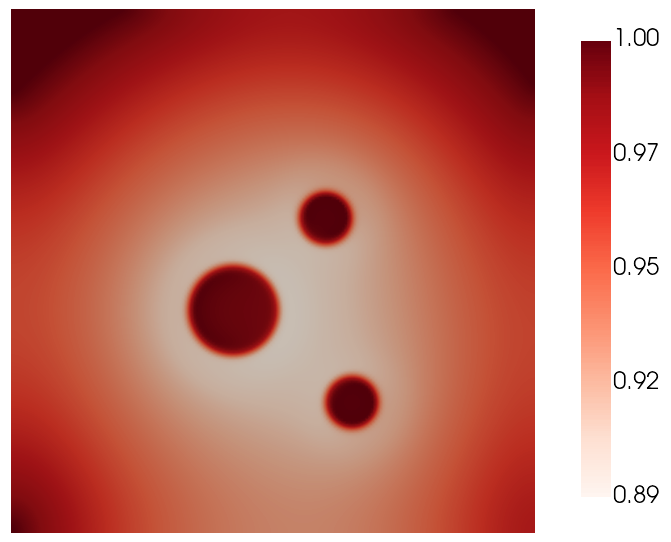}} &
		\raisebox{-0.47\height}{\includegraphics[scale=0.204]{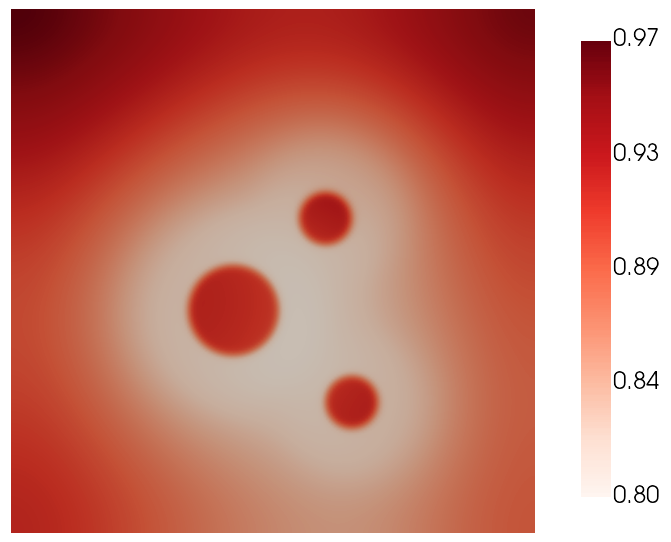}} &
		\raisebox{-0.47\height}{\includegraphics[scale=0.204]{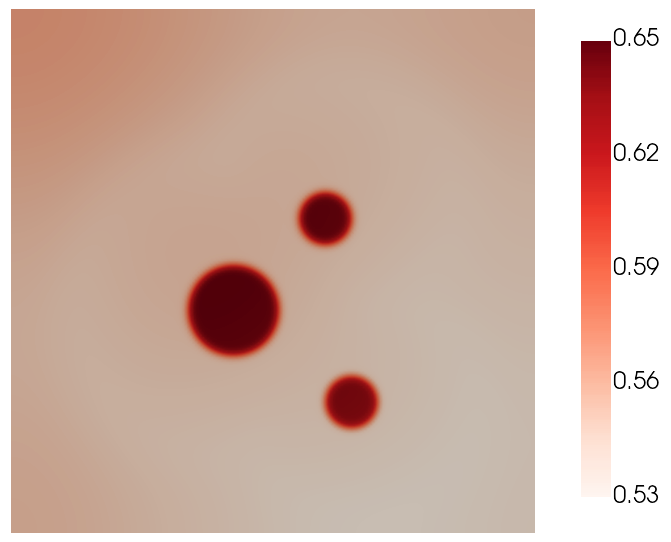}} \\
		&\rotatebox[origin=c]{90}{\textbf{FE}} &
		\raisebox{-0.47\height}{\includegraphics[scale=0.204]{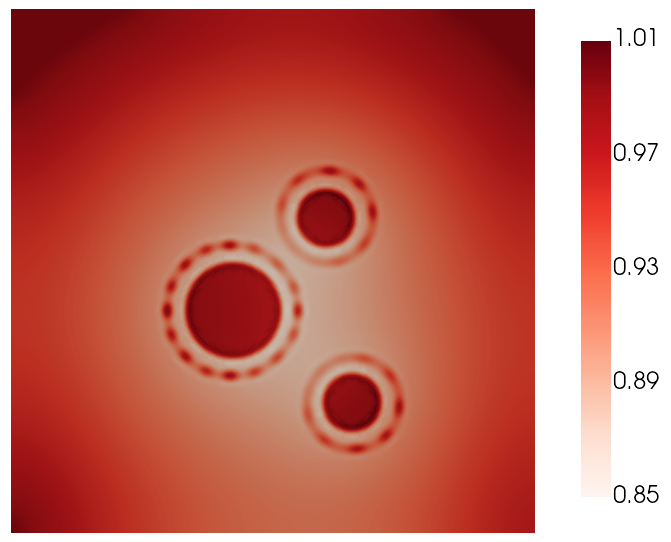}} &
		\raisebox{-0.47\height}{\includegraphics[scale=0.204]{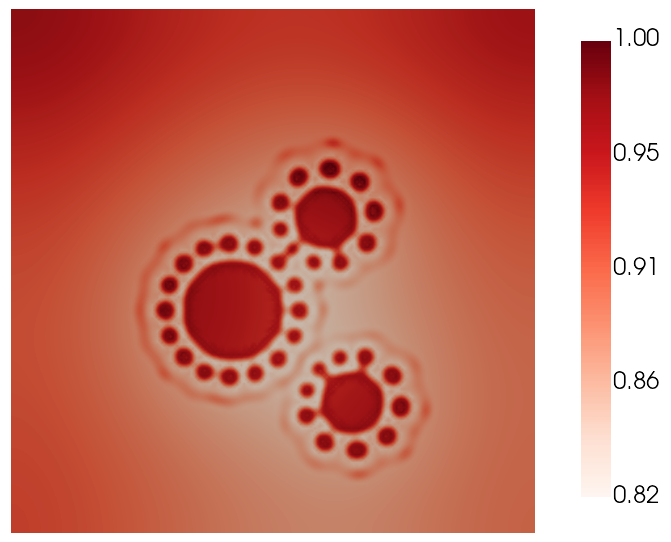}} &
		\raisebox{-0.47\height}{\includegraphics[scale=0.204]{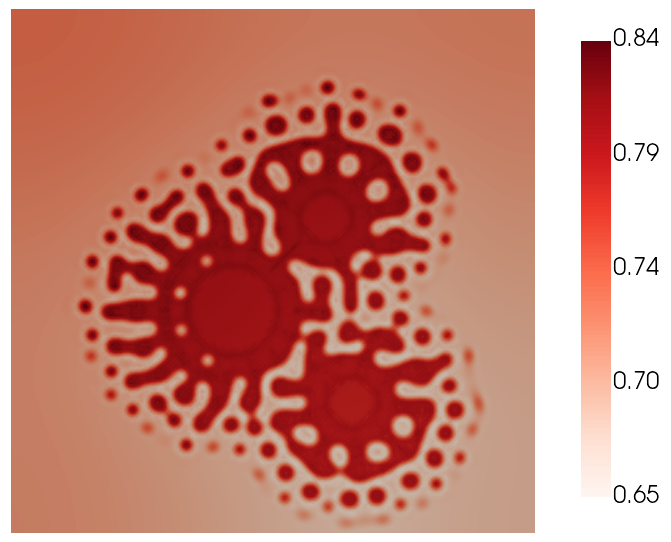}}
	\end{tabular}
	\caption{Tumor and nutrients for test \eqref{sec:numer-experiments_1} with $\chi_0=10$ at different time steps.}
	\label{fig:test-1_2}
\end{figure}

\begin{figure}
	\centering
	\begin{tabular}{cc}
		\hspace*{1cm}$\boldsymbol{u}$ & \hspace*{0.5cm}$\boldsymbol{n}$\\
		\includegraphics[scale=0.45]{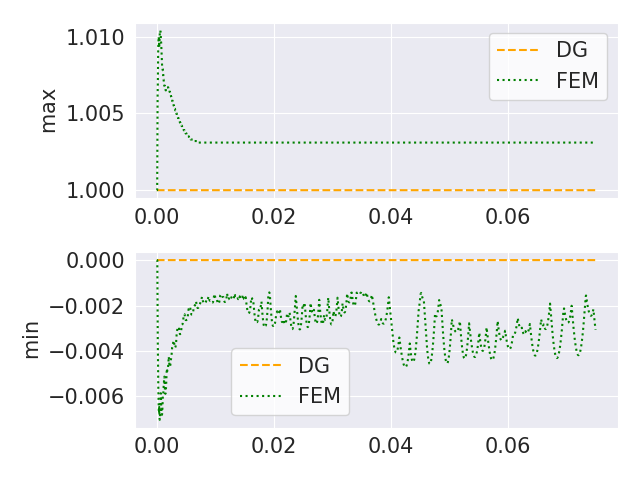} &
		\includegraphics[scale=0.45]{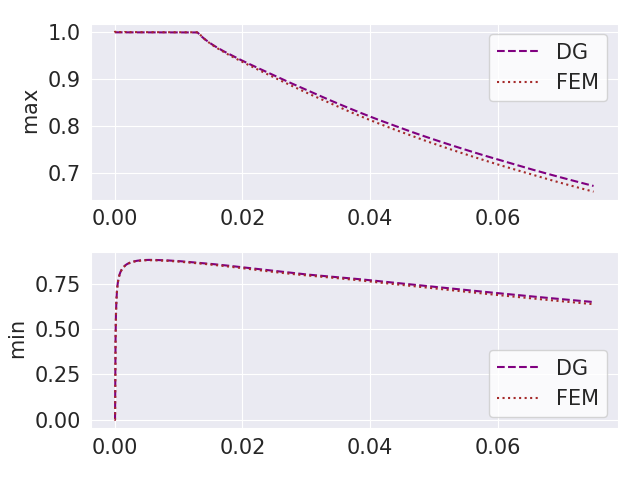}
	\end{tabular}
	\caption{Pointwise bounds of the approximations for test \ref{sec:numer-experiments_1} with $\chi_0=0$ ($u$ left, $n$ right).}
	\label{fig:test-1_1_bounds}
\end{figure}
\begin{figure}
	\centering
	\begin{tabular}{cc}
		\hspace*{1cm}$\boldsymbol{u}$ & \hspace*{0.5cm}$\boldsymbol{n}$\\
		\includegraphics[scale=0.45]{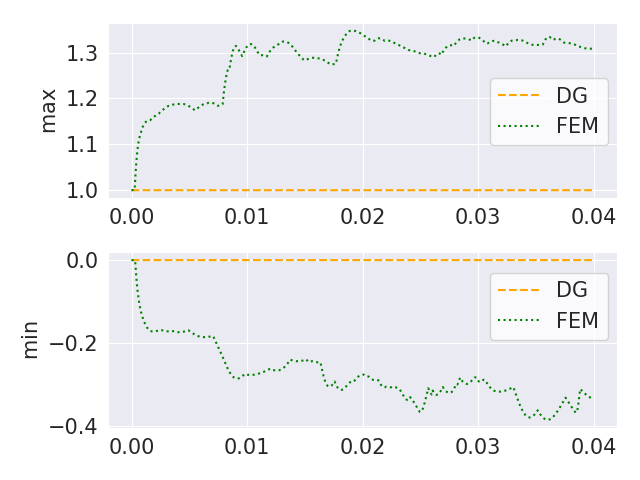} &
		\includegraphics[scale=0.45]{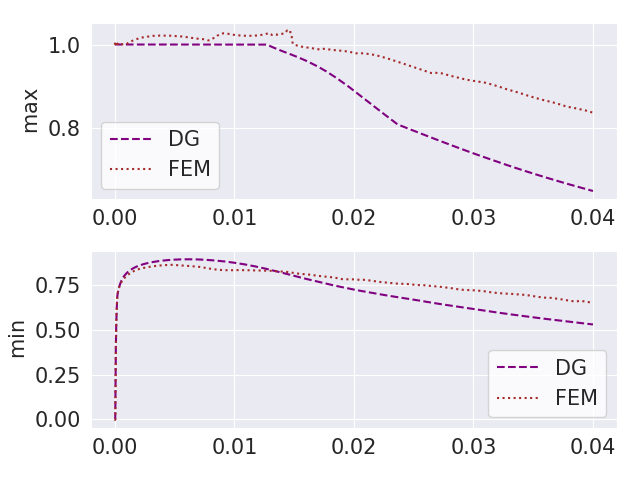}
	\end{tabular}
	\caption{Pointwise bounds of the approximations for test \ref{sec:numer-experiments_1} with $\chi_0=10$ ($u$ left, $n$ right).}
	\label{fig:test-1_2_bounds}
\end{figure}
\begin{figure}
	\centering
	\hspace*{0.8cm}\textbf{Energy}

	\begin{tabular}{cc}
		\includegraphics[scale=0.45]{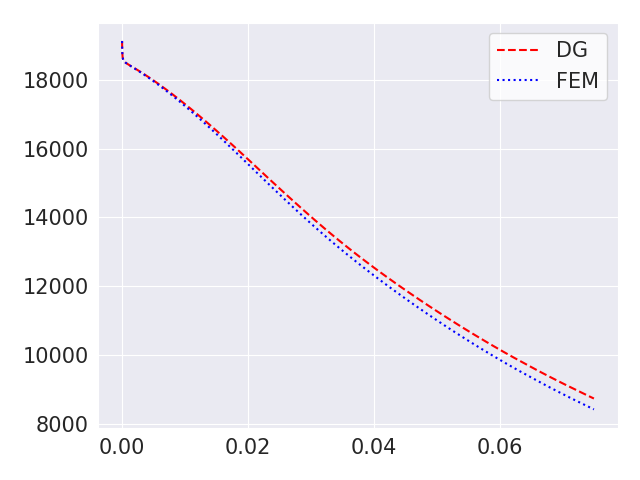} &
		\includegraphics[scale=0.45]{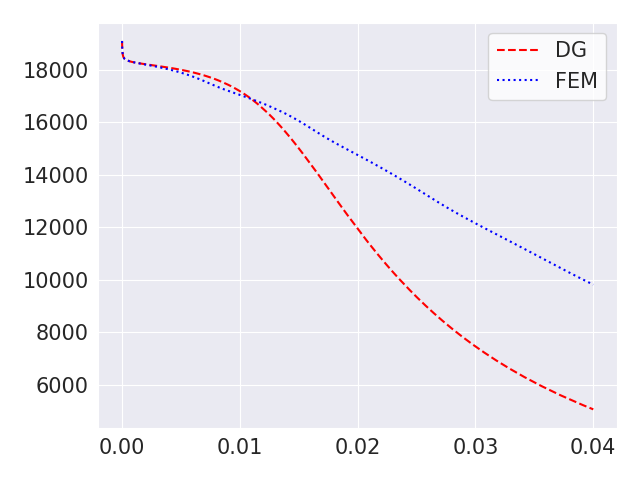}
	\end{tabular}
	\caption{$E(\up^m,n^m)$ for test \ref{sec:numer-experiments_1} with $\chi_0=0$ (left) and with $\chi_0=10$ (right).}
	\label{fig:test-1_energy}
\end{figure}

Furthermore, it is remarkable to emphasize that the convergence of Newton's method for the FE scheme requires a very small time step. In this sense, the previous tests where shown for a small enough time step so that Newton's method converges for both schemes. Conversely, the upwind DG scheme \eqref{esquema_DG_upw_Eyre_van_der_Zee} does converge for larger time steps. In practice, we have been able to compute the approximation given by the DG scheme for this test with time steps up to $\Delta t=10^{-4}$.

\subsection{Irregular tumor growth}
\label{sec:numer-experiments_2}

In this test, we show the irregular growth of a tumor due to the irregular distribution of the nutrients over the domain. It is important to notice the well behavior of the scheme \eqref{esquema_DG_upw_Eyre_van_der_Zee} which allow us to capture different irregular growth processes even in the cases with important cross-diffusion in which we cannot expect FE to work as shown in Subsection \ref{sec:numer-experiments_1}.

 In particular, we consider the following initial conditions for tumor cells and nutrients:
\begin{align*}
	u_0 &= \frac{1}{2}\left[\tanh\left(\frac{1.75 - \sqrt{x^2 + y^2}}{\sqrt{2}\varepsilon}\right) + 1\right],\\
	n_0 &= \frac{1}{2}(1-u_0)
	+ \frac{1}{4}\left[\tanh\left(\frac{1 - \sqrt{(x - 2.45)^2 + (y - 1.45)^2}}{\sqrt{2}\varepsilon}\right)\right.\\
	&\quad
	\left.+ \tanh\left(\frac{1.75 - \sqrt{(x + 3.75)^2 + (y - 1)^2}}{\sqrt{2}\varepsilon}\right)
	+ \tanh\left(\frac{2.5 - \sqrt{x^2 + (y + 5)^2}}{\sqrt{2}\varepsilon}\right) + 3\right],
\end{align*}
which are shown in Figure \ref{fig:test-2_initial_cond}.

\begin{figure}
	\centering
	\begin{tabular}{cc}
		\hspace*{-1.1cm}$\boldsymbol{u_0}$ & \hspace*{-1.1cm}$\boldsymbol{n_0}$\\
		\includegraphics[scale=0.24]{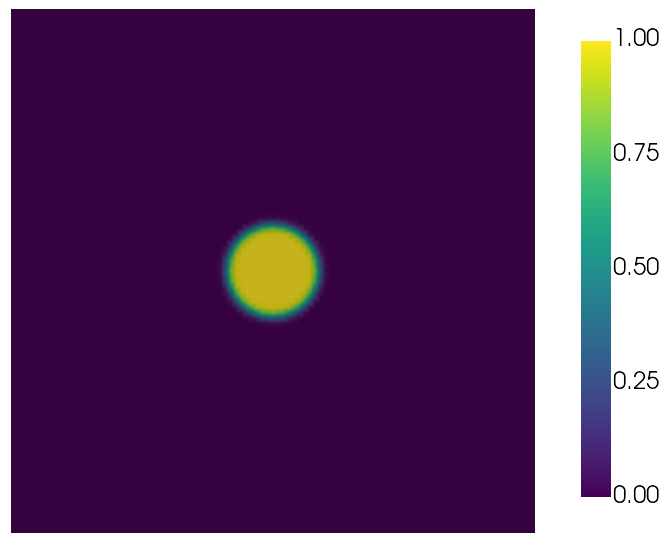} &
		\includegraphics[scale=0.24]{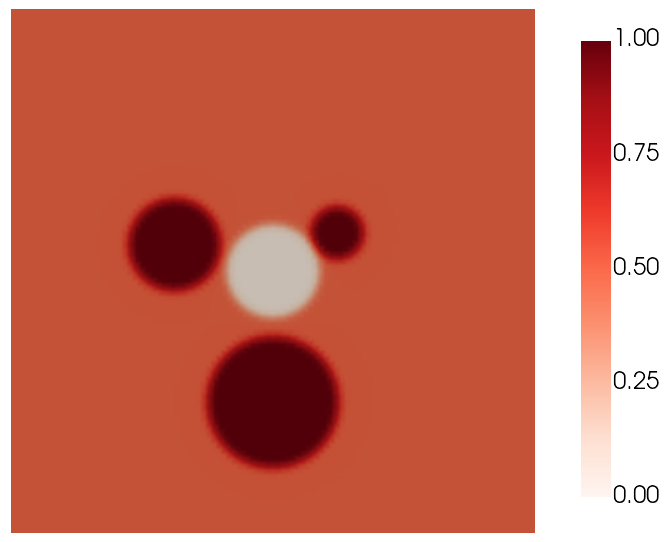}
	\end{tabular}
	\caption{Initial conditions for test \ref{sec:numer-experiments_2} ($u_0$ left, $n_0$ right).}
	\label{fig:test-2_initial_cond}
\end{figure}

We represent the behavior of the solution of the model under different set of parameters, see Figures~\ref{fig:test-2_reference}--\ref{fig:test-2_chi-1_u}. We set $C_u=2.8$, $C_n=2.8\cdot 10^{-4}$, $h\approx 0.28$ for every experiment and we vary the rest of the parameters with respect to the reference test in Figure ~\ref{fig:test-2_reference} ($P_0=0.5$, $\chi_0=0.1$ and $\Delta t=0.1$). For the sake of brevity, we only show the nutrients variable for the reference test.

In fact, we have considered two different types of mobility and proliferation functions. On the one hand, the typical symmetric functions used in the previous experiment \eqref{symmetric_functions} have been used (see the top rows of Figures~\ref{fig:test-2_reference}--\ref{fig:test-2_chi-1_u}). However, on the other hand, we have considered the following non-symmetric choice of the mobility and proliferation functions
\begin{equation}
	\label{nonsymmetric_functions}
	M(v)=h_{5,1}(v),\quad P(u,n)=h_{1,3}(u) n_\oplus,
\end{equation}
whose associated results are plotted in the bottom row of Figures~\ref{fig:test-2_reference}--\ref{fig:test-2_chi-1_u}.

	The proliferation function in \eqref{nonsymmetric_functions} has been chosen to model a very quick tumor growth and nutrient consumption at the non-saturated state ($u\simeq 0$) that decays until the tumor is fully saturated  ($u\simeq 1$). Moreover, the choice of the mobility function in \eqref{nonsymmetric_functions} is thought to prevent the dissemination of the tumor and the nutrients in a non-saturated state ($u,n\simeq 0$) leading to a more local tumor/nutrient interaction due to the proliferation term.

	Of course, the choice of these functions does not limit to those in \eqref{nonsymmetric_functions} and other degenerated mobility and proliferation functions can be considered. In this sense, we would like to emphasize that the choice of these functions may be motivated by different types of tumor which might show particular growth and interaction with nutrients behaviors.

Indeed, we can observe the different expected behaviors of the solution for both choices of mobilities and proliferation functions in Figures~\ref{fig:test-2_reference}--\ref{fig:test-2_chi-1_u}. On the one hand, we may notice a local growth of the tumor where a proliferation area appears around the fully saturated tumor due to \eqref{nonsymmetric_functions}. Conversely, we can observe an eventual dissemination of the tumor all over the domain using \eqref{symmetric_functions} in the cases where the proliferation term is more significant than the cross-diffusion allowing the tumor to grow by consuming nutrients.

\begin{figure}
	\centering
	\begin{tabular}{ccccc}
		& & \hspace*{-1cm}$t=10.0$ & \hspace*{-1cm}$t=20.0$ & \hspace*{-1cm}$t=50.0$ \\
		\multirow{3}{*}{\vspace*{-2.7cm}$\boldsymbol{u}$}&\rotatebox[origin=c]{90}{\textbf{Symmetric \eqref{symmetric_functions}}} &
		\raisebox{-0.47\height}{\includegraphics[scale=0.204]{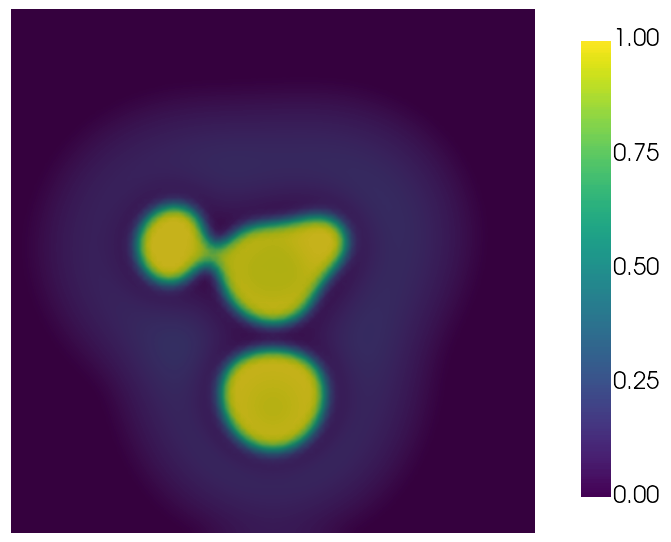}} &
		\raisebox{-0.47\height}{\includegraphics[scale=0.204]{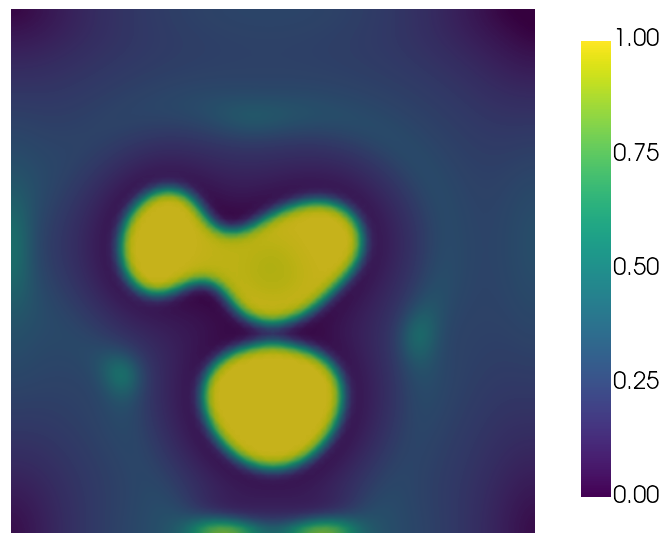}} &
		\raisebox{-0.47\height}{\includegraphics[scale=0.204]{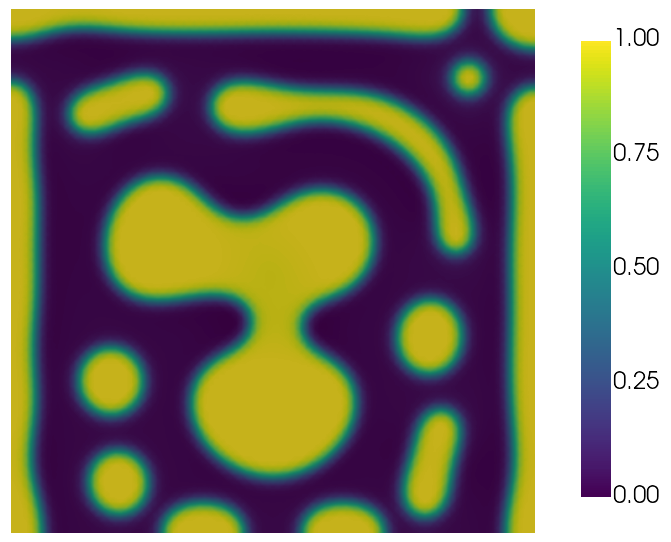}} \\
		&\rotatebox[origin=c]{90}{\textbf{Non-symmetric \eqref{nonsymmetric_functions}}} &
		\raisebox{-0.47\height}{\includegraphics[scale=0.204]{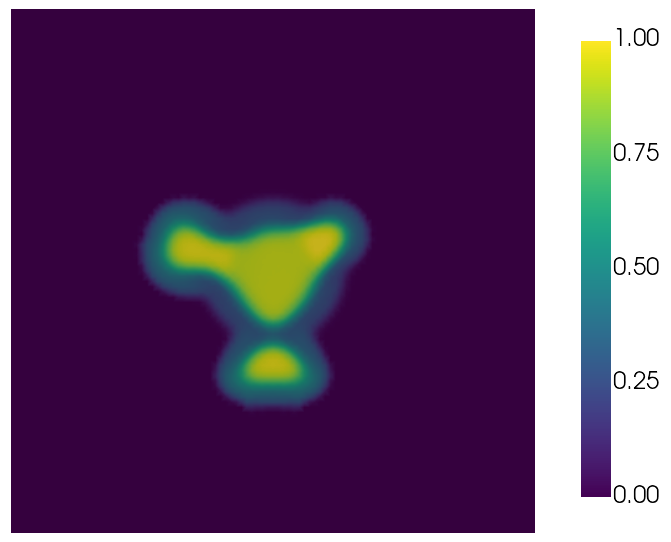}} &
		\raisebox{-0.47\height}{\includegraphics[scale=0.204]{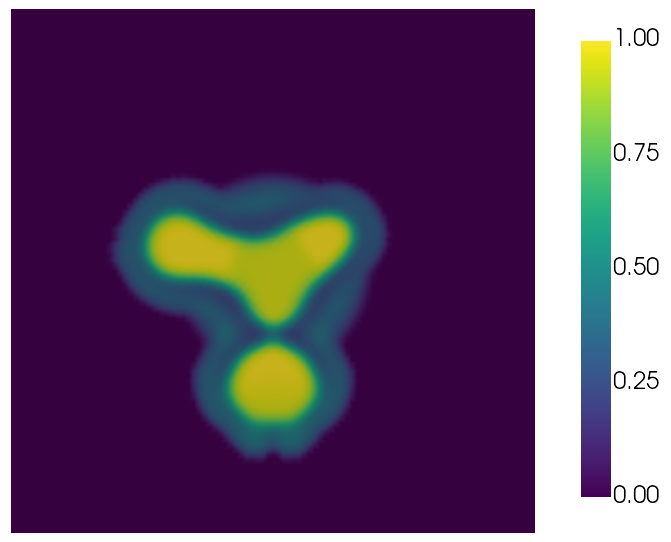}} &
		\raisebox{-0.47\height}{\includegraphics[scale=0.204]{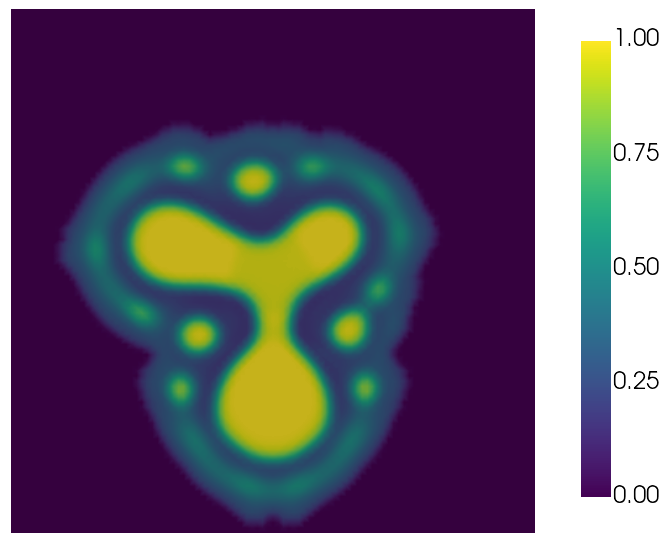}}\\
		\multirow{3}{*}{\vspace*{-2.7cm}$\boldsymbol{n}$}&\rotatebox[origin=c]{90}{\textbf{Symmetric \eqref{symmetric_functions}}} &
		\raisebox{-0.47\height}{\includegraphics[scale=0.204]{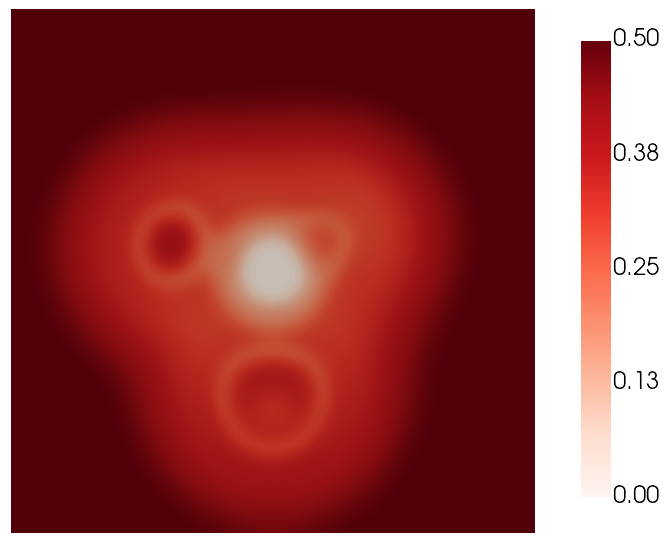}} &
		\raisebox{-0.47\height}{\includegraphics[scale=0.204]{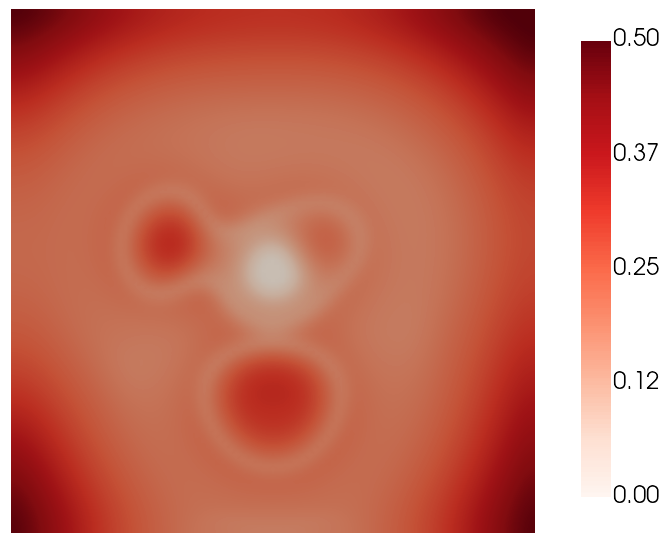}} &
		\raisebox{-0.47\height}{\includegraphics[scale=0.204]{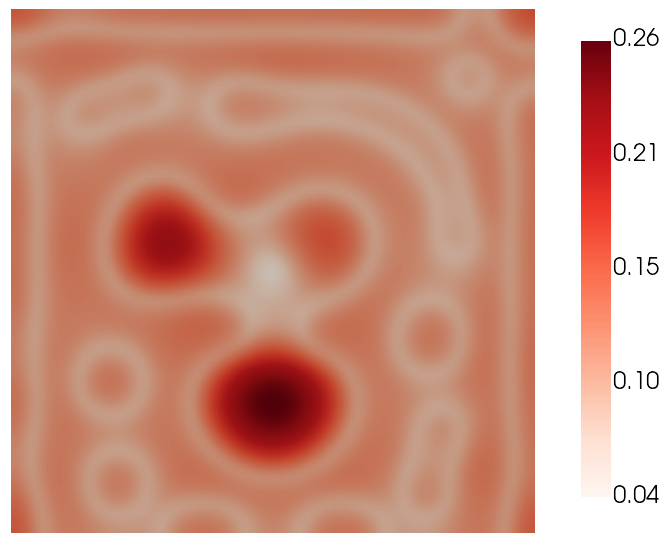}} \\
		&\rotatebox[origin=c]{90}{\textbf{Non-symmetric \eqref{nonsymmetric_functions}}} &
		\raisebox{-0.47\height}{\includegraphics[scale=0.204]{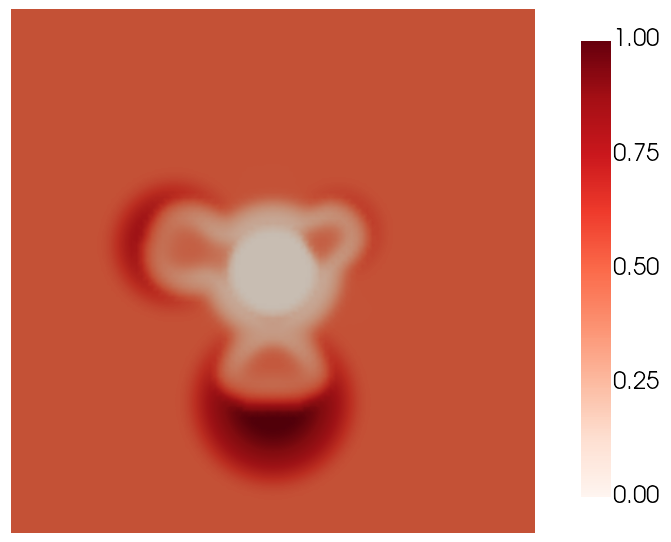}} &
		\raisebox{-0.47\height}{\includegraphics[scale=0.204]{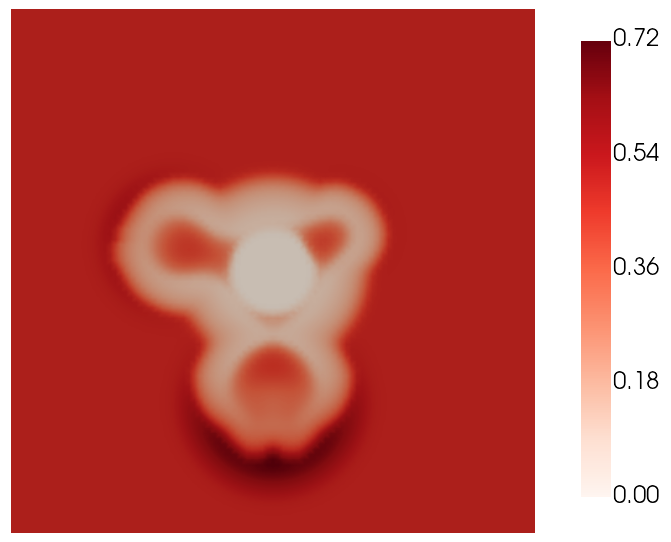}} &
		\raisebox{-0.47\height}{\includegraphics[scale=0.204]{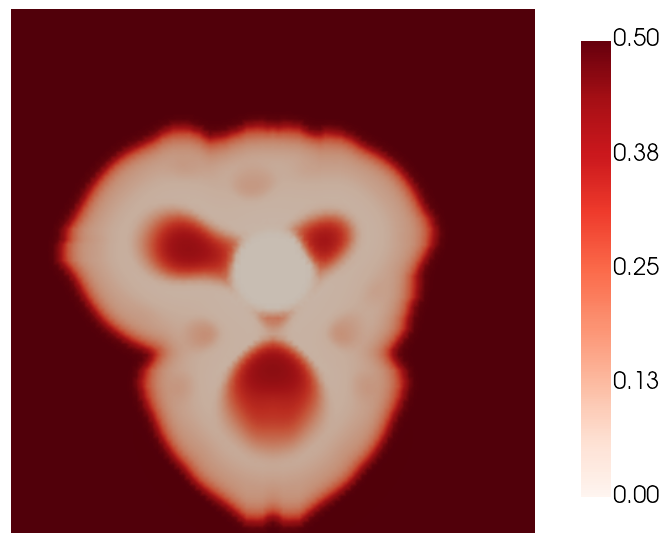}}
	\end{tabular}
	\caption{Tumor and nutrients for test \eqref{sec:numer-experiments_2} ($P_0=0.5$, $\chi_0=0.1$, $\Delta t=0.1$) at different time steps.}
	\label{fig:test-2_reference}
\end{figure}

\begin{figure}
	\centering
	\begin{tabular}{cccc}
			& \hspace*{-1cm}$t=30.0$ & \hspace*{-1cm}$t=50.0$ & \hspace*{-1cm}$t=100.0$ \\
			\rotatebox[origin=c]{90}{\textbf{Symmetric \eqref{symmetric_functions}}} &
			\raisebox{-0.47\height}{\includegraphics[scale=0.204]{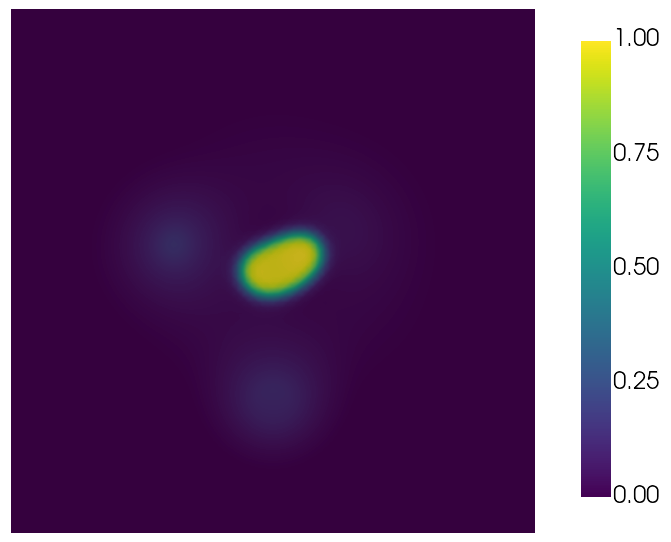}} &
			\raisebox{-0.47\height}{\includegraphics[scale=0.204]{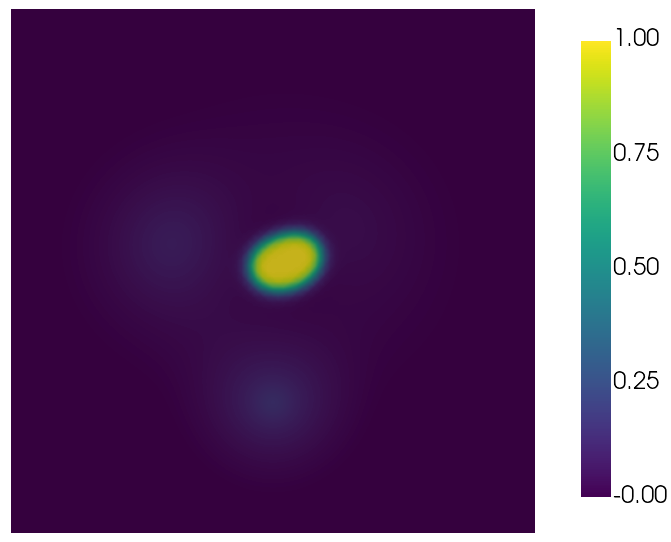}} &
			\raisebox{-0.47\height}{\includegraphics[scale=0.204]{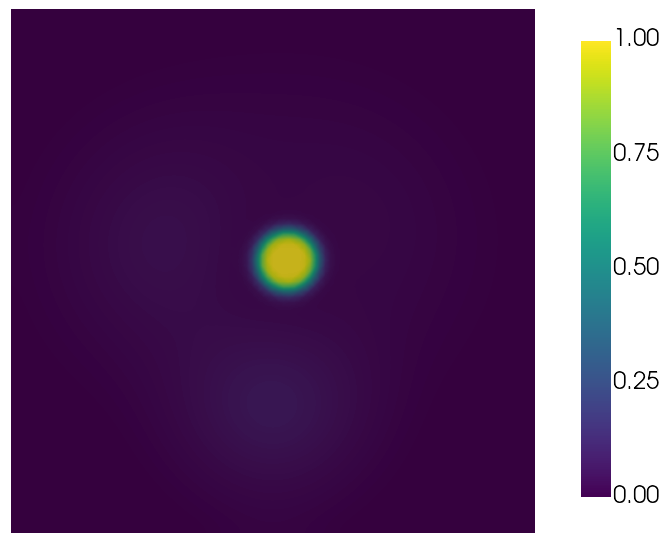}} \\
			\rotatebox[origin=c]{90}{\textbf{Non-symmetric \eqref{nonsymmetric_functions}}} &
			\raisebox{-0.47\height}{\includegraphics[scale=0.204]{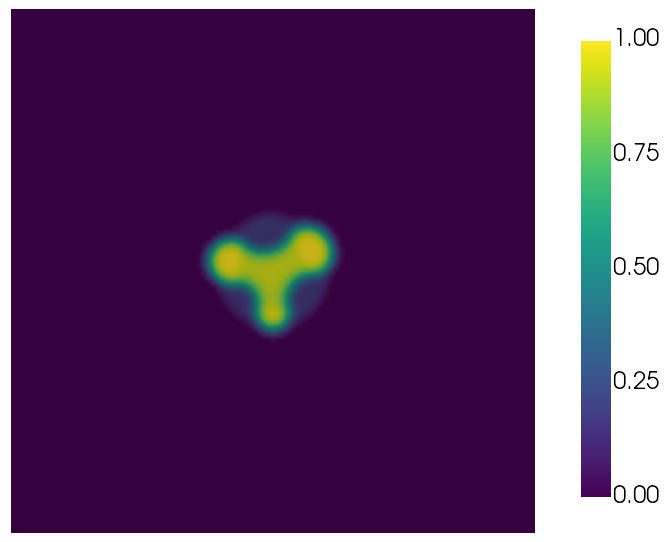}} &
			\raisebox{-0.47\height}{\includegraphics[scale=0.204]{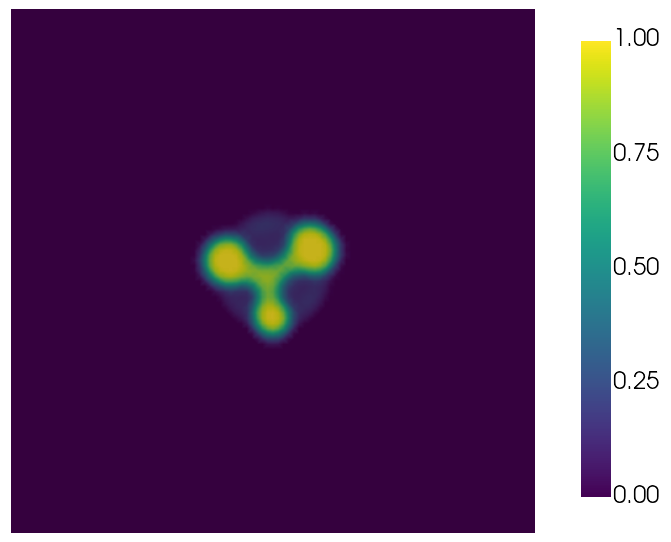}} &
			\raisebox{-0.47\height}{\includegraphics[scale=0.204]{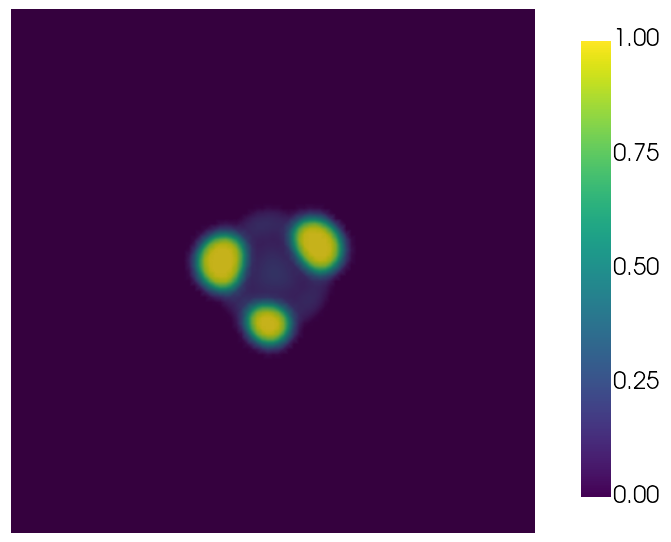}}
	\end{tabular}
	\caption{Tumor for test \eqref{sec:numer-experiments_2} ($P_0=0.001$, $\chi_0=0.1$, $\Delta t=0.1$) at different time steps.}
	\label{fig:test-2_P0-0.001_u}
\end{figure}

\begin{figure}
	\centering
	\begin{tabular}{cccc}
			& \hspace*{-1cm}$t=50.0$ & \hspace*{-1cm}$t=80.0$ & \hspace*{-1cm}$t=200.0$ \\
			\rotatebox[origin=c]{90}{\textbf{Symmetric \eqref{symmetric_functions}}} &
			\raisebox{-0.47\height}{\includegraphics[scale=0.204]{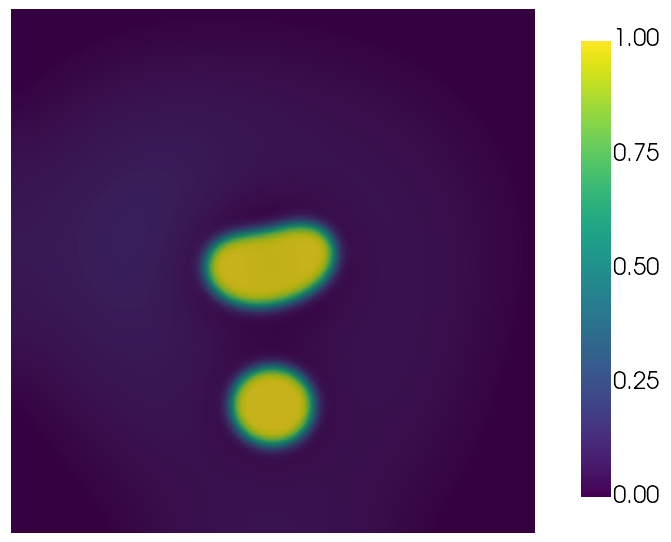}} &
			\raisebox{-0.47\height}{\includegraphics[scale=0.204]{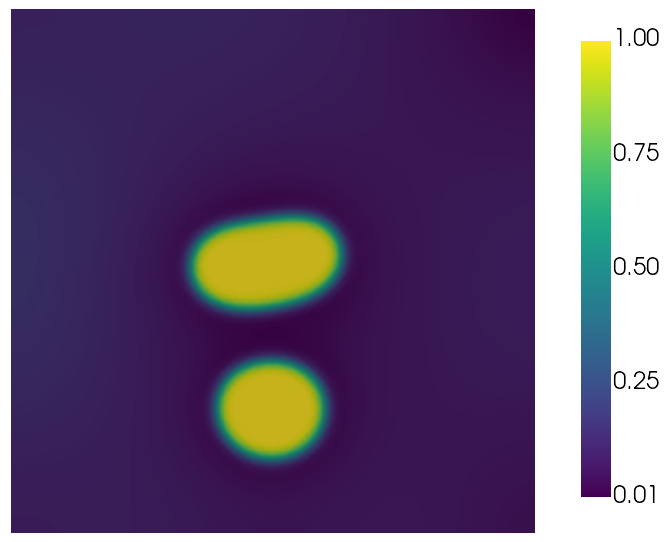}} &
			\raisebox{-0.47\height}{\includegraphics[scale=0.204]{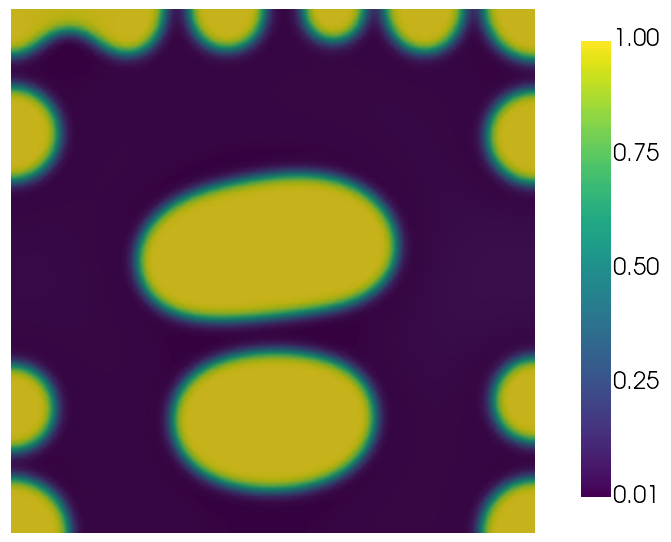}} \\
			\rotatebox[origin=c]{90}{\textbf{Non-symmetric \eqref{nonsymmetric_functions}}} &
			\raisebox{-0.47\height}{\includegraphics[scale=0.204]{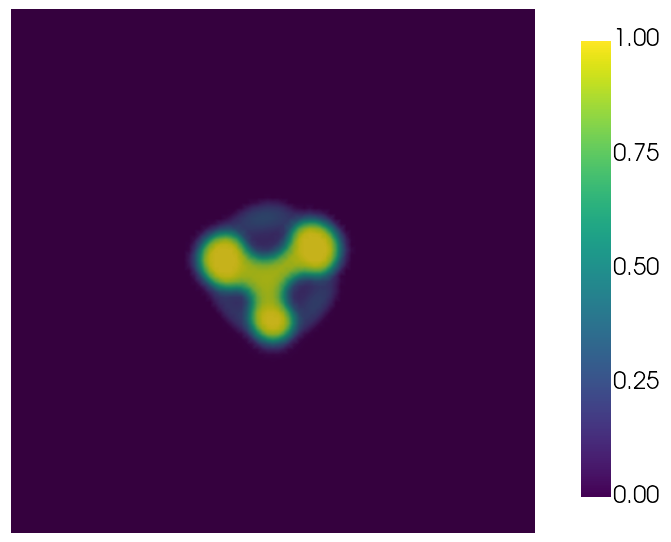}} &
			\raisebox{-0.47\height}{\includegraphics[scale=0.204]{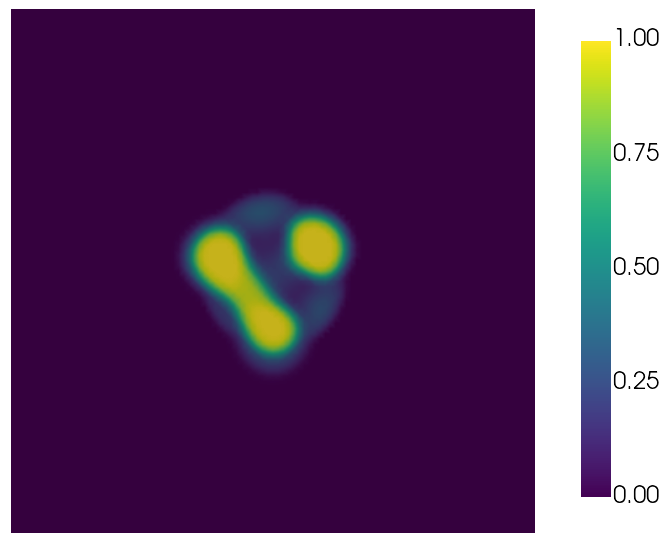}} &
			\raisebox{-0.47\height}{\includegraphics[scale=0.204]{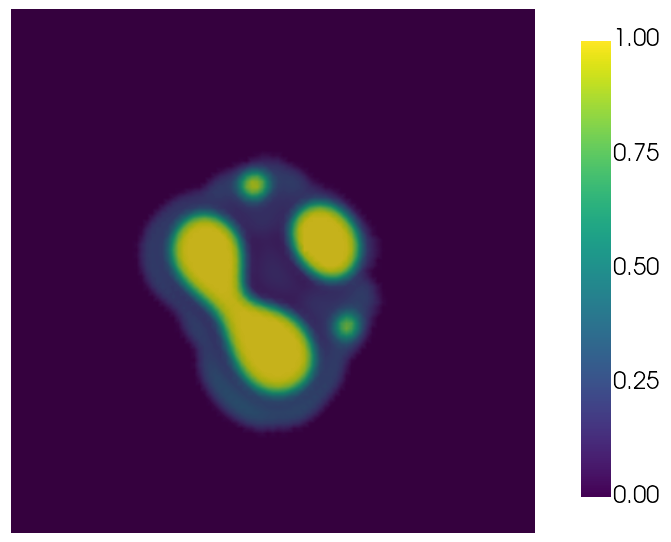}}
	\end{tabular}
	\caption{Tumor for test \eqref{sec:numer-experiments_2} ($P_0=0.05$, $\chi_0=0.1$, $\Delta t=0.1$) at different time steps.}
	\label{fig:test-2_P0-0.05_u}
\end{figure}

\begin{figure}
	\centering
	\begin{tabular}{cccc}
			& \hspace*{-1cm}$t=1.25$ & \hspace*{-1cm}$t=6.25$ & \hspace*{-1cm}$t=12.5$ \\
			\rotatebox[origin=c]{90}{\textbf{Symmetric \eqref{symmetric_functions}}} &
			\raisebox{-0.47\height}{\includegraphics[scale=0.204]{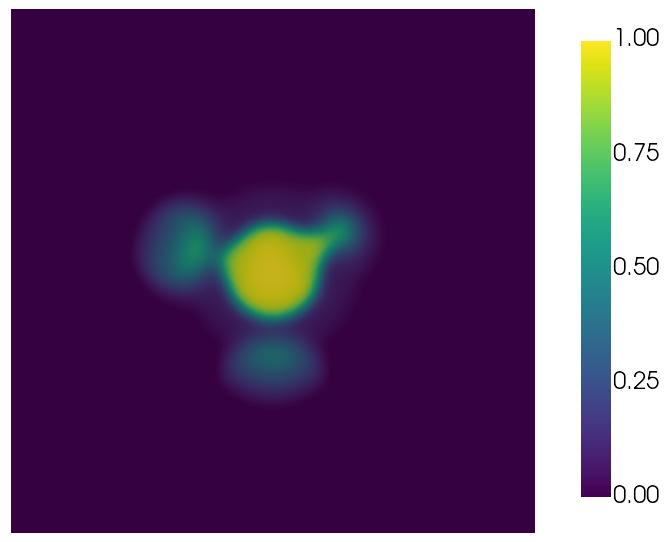}} &
			\raisebox{-0.47\height}{\includegraphics[scale=0.204]{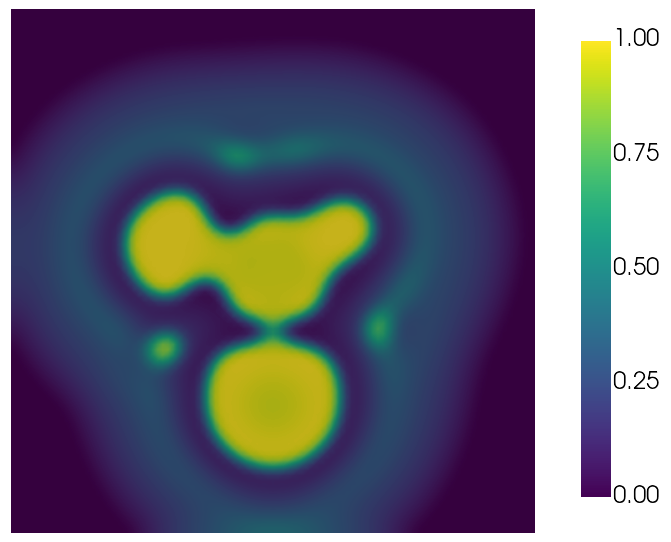}} &
			\raisebox{-0.47\height}{\includegraphics[scale=0.204]{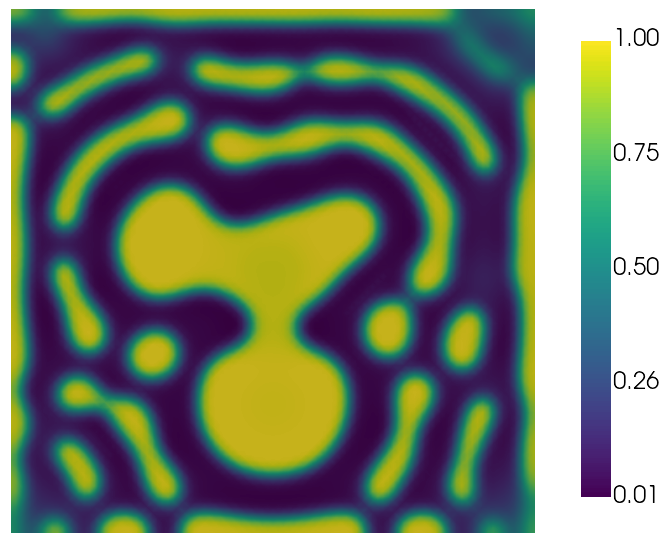}} \\
			\rotatebox[origin=c]{90}{\textbf{Non-symmetric \eqref{nonsymmetric_functions}}} &
			\raisebox{-0.47\height}{\includegraphics[scale=0.204]{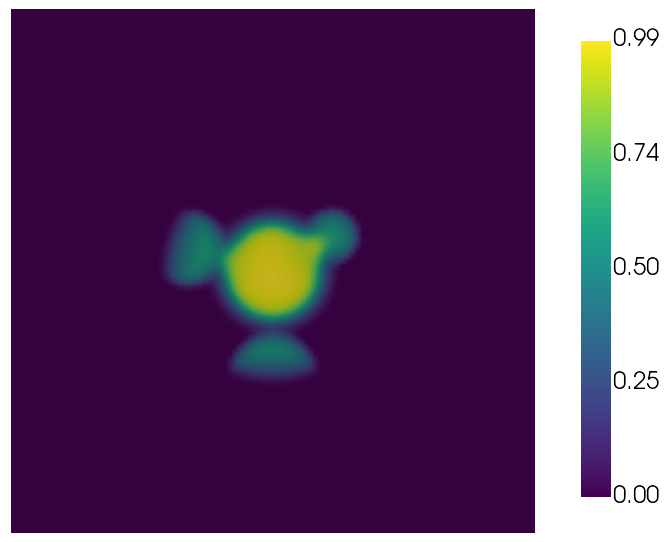}} &
			\raisebox{-0.47\height}{\includegraphics[scale=0.204]{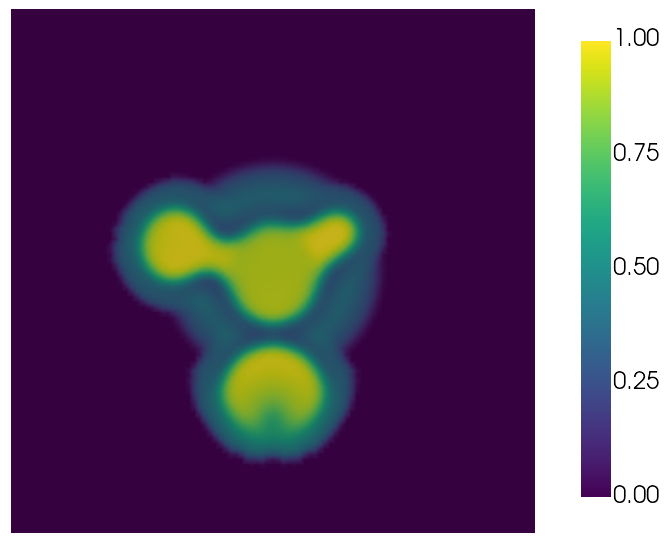}} &
			\raisebox{-0.47\height}{\includegraphics[scale=0.204]{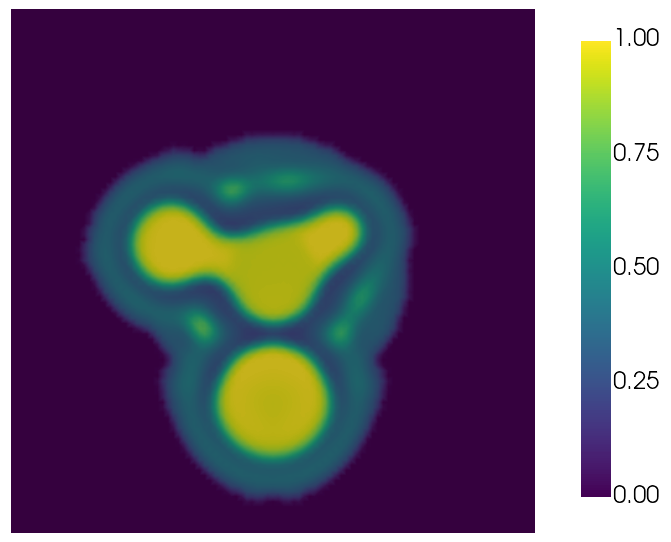}}
	\end{tabular}
	\caption{Tumor for test \eqref{sec:numer-experiments_2} ($P_0=2$, $\chi_0=0.1$, $\Delta t=0.025$) at different time steps.}
	\label{fig:test-2_P0-2_u}
\end{figure}

\begin{figure}
	\centering
	\begin{tabular}{cccc}
			& \hspace*{-1cm}$t=10$ & \hspace*{-1cm}$t=20$ & \hspace*{-1cm}$t=50$ \\
			\rotatebox[origin=c]{90}{\textbf{Symmetric \eqref{symmetric_functions}}} &
			\raisebox{-0.47\height}{\includegraphics[scale=0.204]{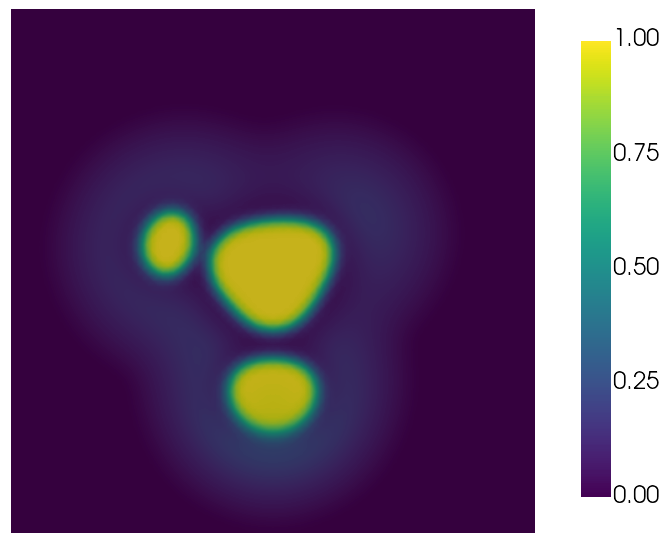}} &
			\raisebox{-0.47\height}{\includegraphics[scale=0.204]{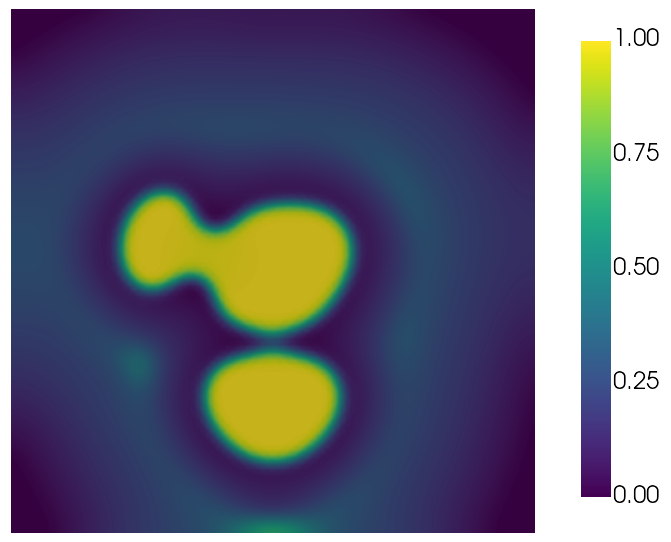}} &
			\raisebox{-0.47\height}{\includegraphics[scale=0.204]{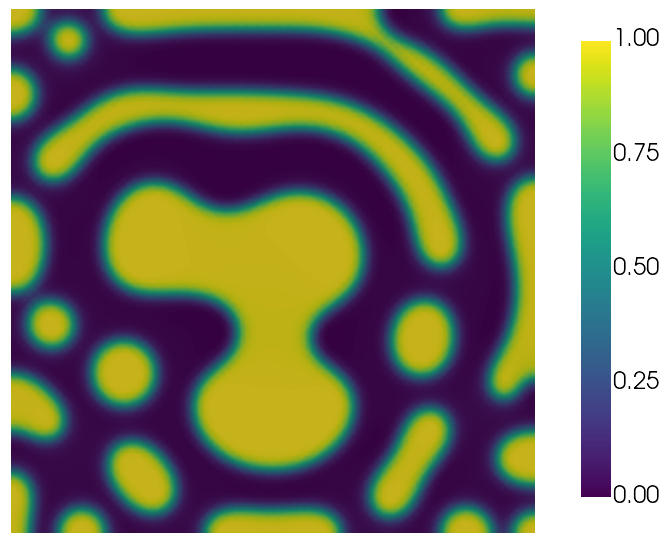}} \\
			\rotatebox[origin=c]{90}{\textbf{Non-symmetric \eqref{nonsymmetric_functions}}} &
			\raisebox{-0.47\height}{\includegraphics[scale=0.204]{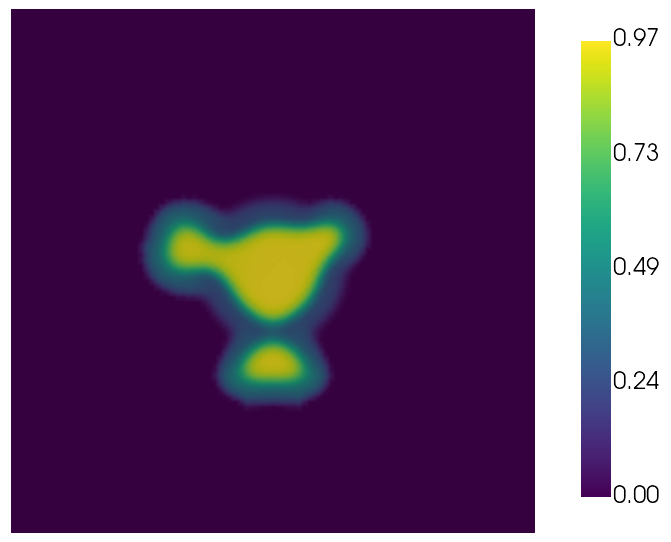}} &
			\raisebox{-0.47\height}{\includegraphics[scale=0.204]{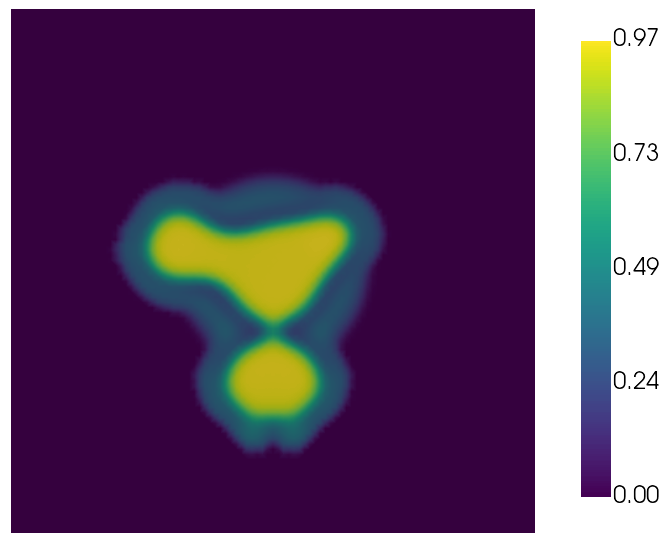}} &
			\raisebox{-0.47\height}{\includegraphics[scale=0.204]{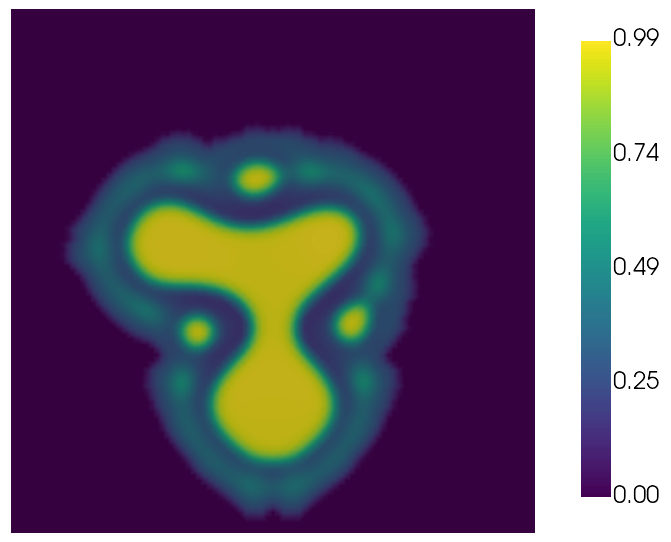}}
	\end{tabular}
	\caption{Tumor for test \eqref{sec:numer-experiments_2} ($P_0=0.5$, $\chi_0=0.01$, $\Delta t=0.1$) at different time steps.}
	\label{fig:test-2_chi-0.01_u}
\end{figure}

\begin{figure}
	\centering
	\begin{tabular}{cccc}
			& \hspace*{-1cm}$t=3$ & \hspace*{-1cm}$t=10$ & \hspace*{-1cm}$t=17$ \\
			\rotatebox[origin=c]{90}{\textbf{Symmetric \eqref{symmetric_functions}}} &
			\raisebox{-0.47\height}{\includegraphics[scale=0.204]{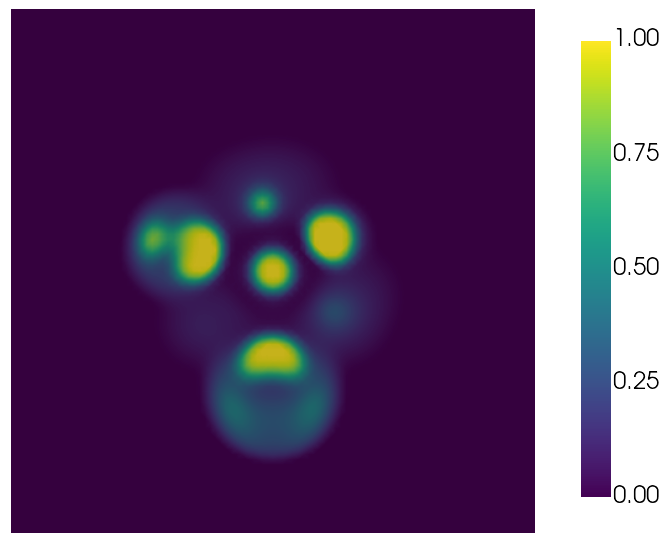}} &
			\raisebox{-0.47\height}{\includegraphics[scale=0.204]{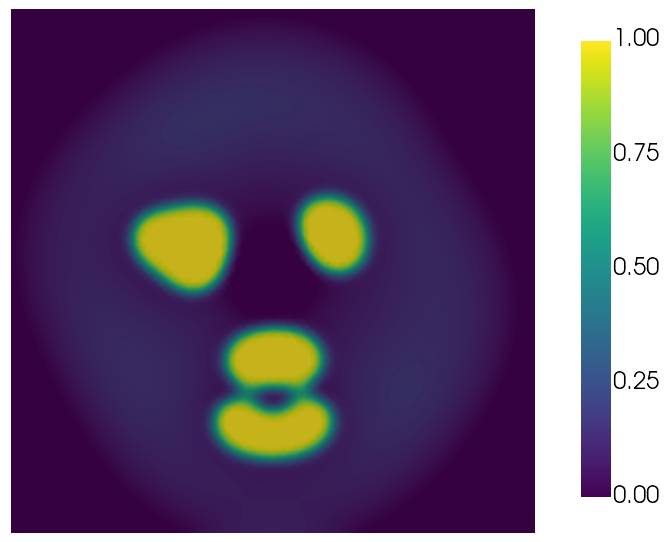}} &
			\raisebox{-0.47\height}{\includegraphics[scale=0.204]{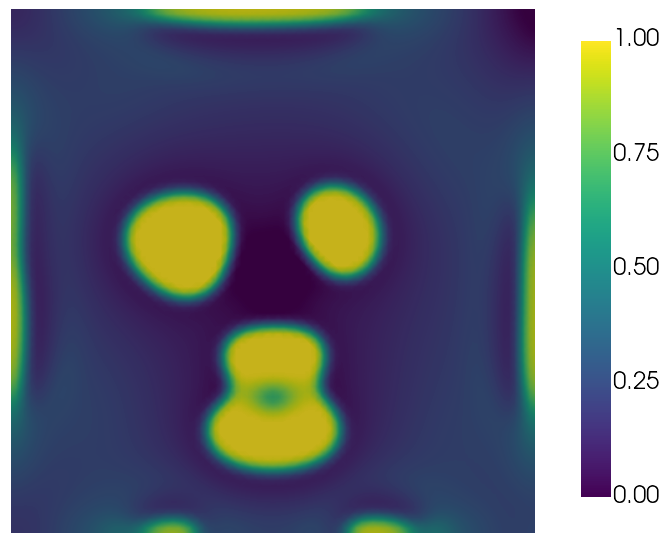}} \\
			\rotatebox[origin=c]{90}{\textbf{Non-symmetric \eqref{nonsymmetric_functions}}} &
			\raisebox{-0.47\height}{\includegraphics[scale=0.204]{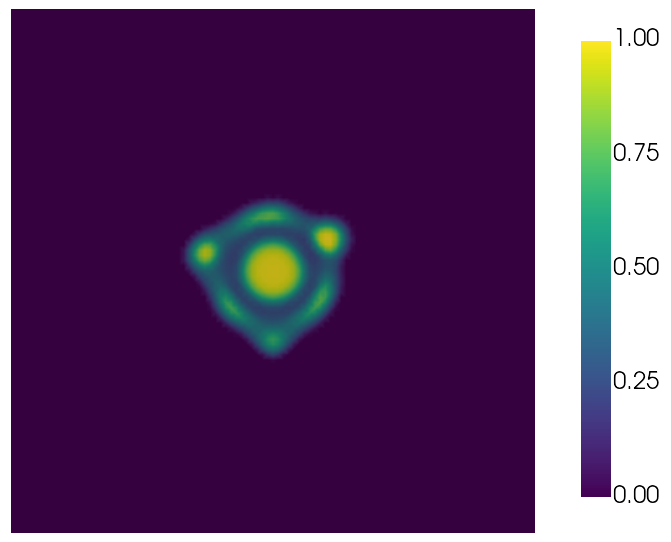}} &
			\raisebox{-0.47\height}{\includegraphics[scale=0.204]{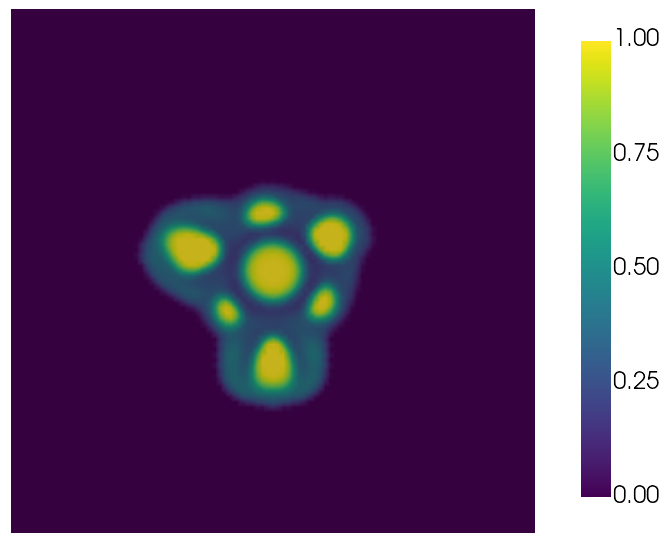}} &
			\raisebox{-0.47\height}{\includegraphics[scale=0.204]{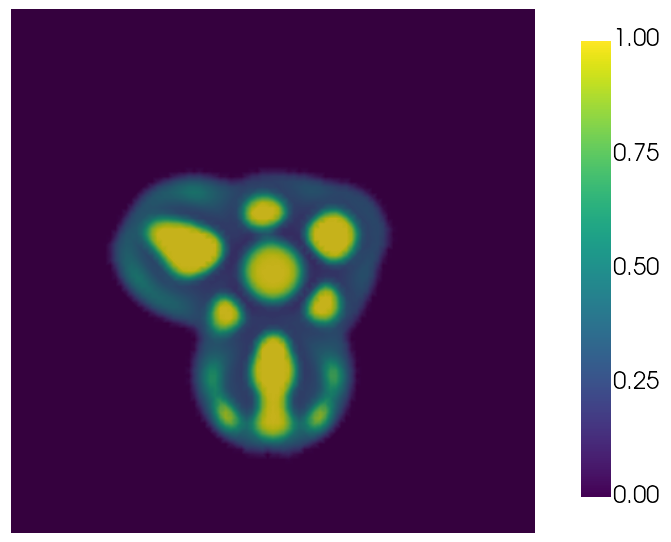}}
	\end{tabular}
	\caption{Tumor for test \eqref{sec:numer-experiments_2} ($P_0=0.5$, $\chi_0=0.5$, $\Delta t=0.01$) at different time steps.}
	\label{fig:test-2_chi-0.5_u}
\end{figure}

\begin{figure}
	\centering
	\begin{tabular}{cccc}
			& \hspace*{-1cm}$t=2.5$ & \hspace*{-1cm}$t=5$ & \hspace*{-1cm}$t=10$ \\
			\rotatebox[origin=c]{90}{\textbf{Symmetric \eqref{symmetric_functions}}} &
			\raisebox{-0.47\height}{\includegraphics[scale=0.204]{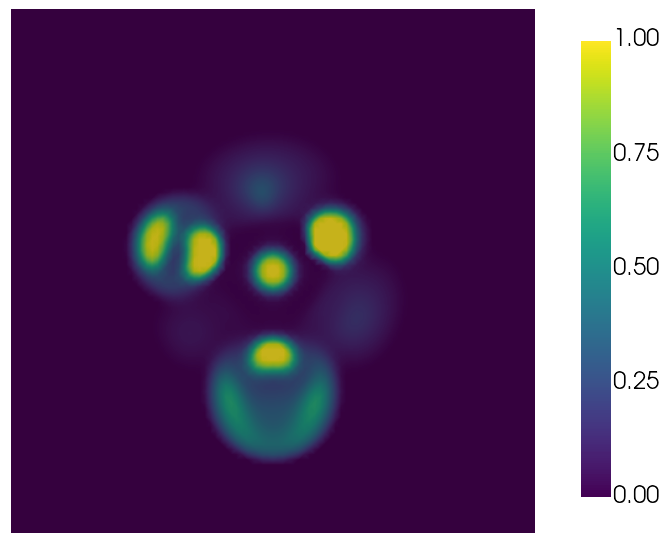}} &
			\raisebox{-0.47\height}{\includegraphics[scale=0.204]{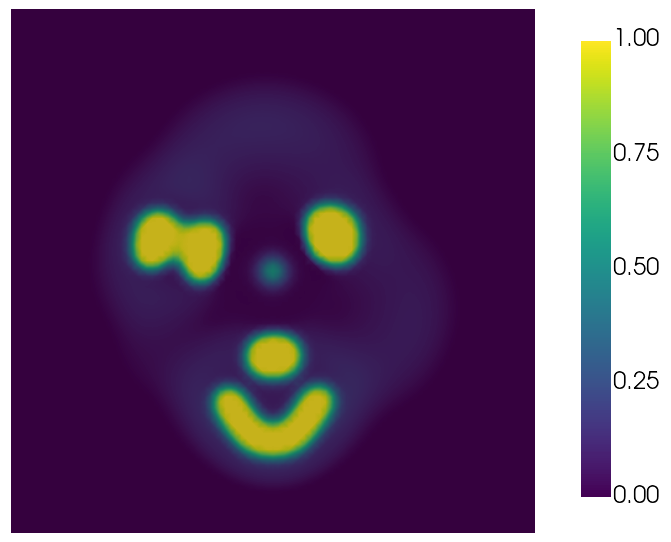}} &
			\raisebox{-0.47\height}{\includegraphics[scale=0.204]{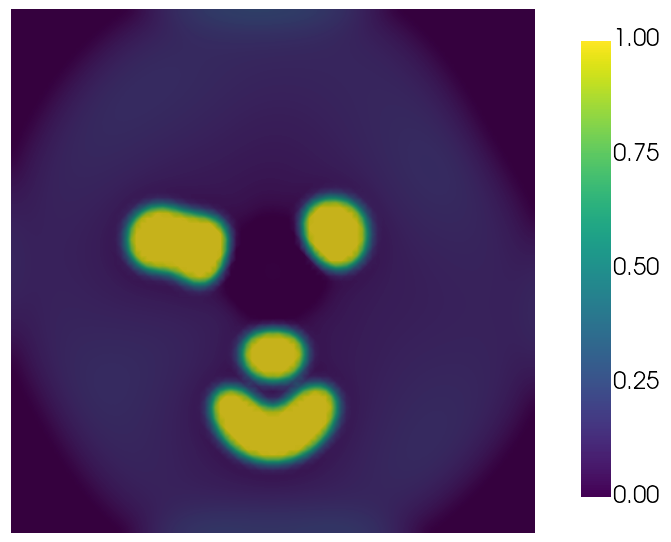}} \\
			\rotatebox[origin=c]{90}{\textbf{Non-symmetric \eqref{nonsymmetric_functions}}} &
			\raisebox{-0.47\height}{\includegraphics[scale=0.204]{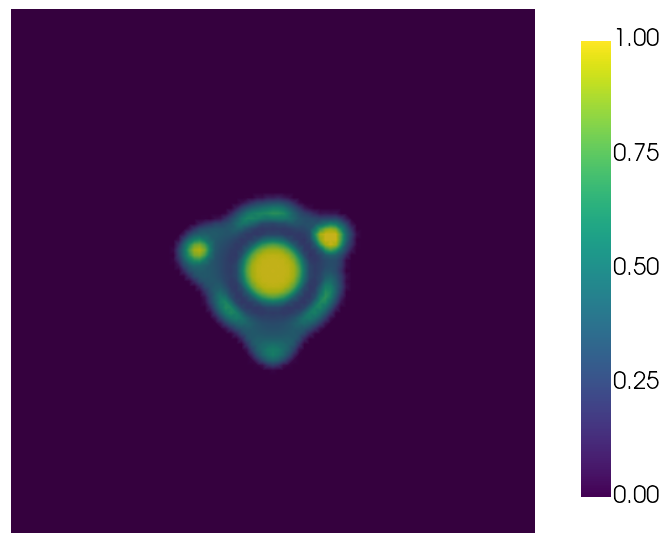}} &
			\raisebox{-0.47\height}{\includegraphics[scale=0.204]{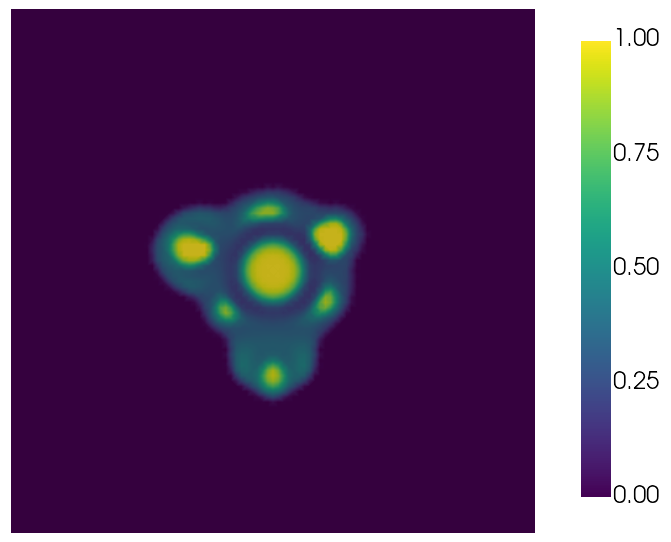}} &
			\raisebox{-0.47\height}{\includegraphics[scale=0.204]{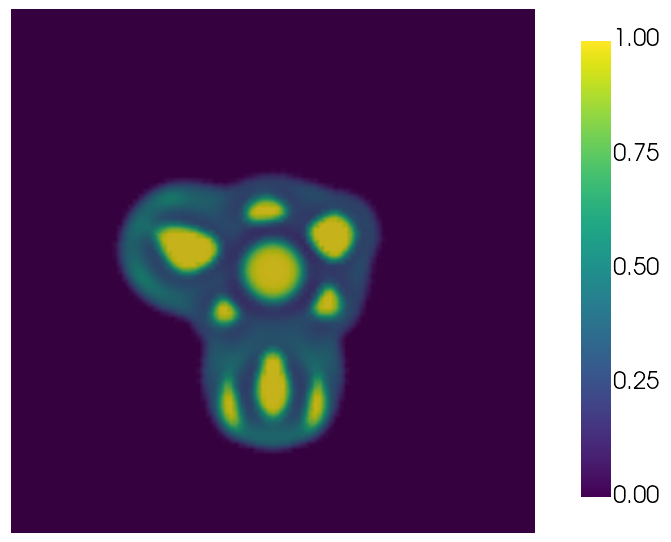}}
	\end{tabular}
	\caption{Tumor for test \eqref{sec:numer-experiments_2} ($P_0=0.5$, $\chi_0=1$, $\Delta t=0.01$) at different time steps.}
	\label{fig:test-2_chi-1_u}
\end{figure}

\section*{Acknowledgments}
The first author has been supported by \textit{UCA FPU contract UCA/REC14VPCT/2020 funded by Universidad de Cádiz} and by a \textit{Graduate Scholarship funded by the University of Tennessee at Chattanooga}. The second and third authors have been supported by \textit{Grant US-4931381261 (US/JUNTA/FEDER, UE)}.

\bibliography{biblio_database}

\begin{thebibliography}{10}

\bibitem{acosta-soba_KS_2022}
D.~Acosta-Soba, F.~Guill{\'e}n-Gonz{\'a}lez, and J.~R. Rodr{\'\i}guez-Galv{\'a}n.
\newblock An {Unconditionally} {Energy} {Stable} and {Positive} {Upwind} {DG} {Scheme} for the {Keller}--{Segel} {Model}.
\newblock {\em Journal of Scientific Computing}, 97(18), 2023.

\bibitem{acosta-soba_CH_2022}
D.~Acosta-Soba, F.~Guillén-González, and J.~R. Rodríguez-Galván.
\newblock An upwind {DG} scheme preserving the maximum principle for the convective {Cahn}--{Hilliard} model.
\newblock {\em Numerical Algorithms}, 92(3):1589--1619, 2023.

\bibitem{AlnaesEtal2014}
M.~S. Alnaes, A.~Logg, K.~B. Ølgaard, M.~E. Rognes, and G.~N. Wells.
\newblock Unified {Form} {Language}: {A} domain-specific language for weak formulations of partial differential equations.
\newblock {\em {ACM} Transactions on Mathematical Software}, 40, 2014.

\bibitem{aristotelous}
A.~C. Aristotelous.
\newblock {\em Adaptive {Discontinuous} {Galerkin} {Finite} {Element} {Methods} for a {Diffuse} {Interface} {Model} of {Biological} {Growth}}.
\newblock PhD thesis, University of Tennessee, 2011.
\newblock \url{https://trace.tennessee.edu/utk_graddiss/1051}.

\bibitem{badia2011finite}
S.~Badia, F.~Guill\'en-Gonz\'alez, and J.~V. Guti\'errez-Santacreu.
\newblock Finite element approximation of nematic liquid crystal flows using a saddle-point structure.
\newblock {\em Journal of Computational Physics}, 230(4):1686--1706, 2011.

\bibitem{bailo-carrillo_Cahn-Hilliard-FV_2021}
R.~Bailo, J.~A. Carrillo, S.~Kalliadasis, and S.~P. Perez.
\newblock Unconditional bound-preserving and energy-dissipating finite-volume schemes for the {Cahn}-{Hilliard} equation.
\newblock {\em arXiv preprint arXiv:2105.05351}, 2021.

\bibitem{cahn_hilliard}
J.~W. Cahn and J.~E. Hilliard.
\newblock Free {Energy} of a {Nonuniform} {System}. {I}. {Interfacial} {Free} {Energy}.
\newblock {\em The Journal of Chemical Physics}, 28(2):258--267, 1958.

\bibitem{chulian2022mathematical}
S.~Chuli{\'a}n, A.~Mart{\'\i}nez-Rubio, M.~Rosa, and V.~M. P{\'e}rez-Garc{\'\i}a.
\newblock Mathematical models of {Leukaemia} and its treatment: {A} review.
\newblock {\em SeMA Journal}, 79(3):441--486, 2022.

\bibitem{colli2014cahn}
P.~Colli, G.~Gilardi, and D.~Hilhorst.
\newblock On a {Cahn}--{Hilliard} type phase field system related to tumor growth.
\newblock {\em Discrete and Continuous Dynamical Systems}, 35(6):2423--2442, 2015.

\bibitem{colli2015vanishing}
P.~Colli, G.~Gilardi, E.~Rocca, and J.~Sprekels.
\newblock Vanishing viscosities and error estimate for a {Cahn}--{Hilliard} type phase field system related to tumor growth.
\newblock {\em Nonlinear Analysis: Real World Applications}, 26:93--108, 2015.

\bibitem{colli2017asymptotic}
P.~Colli, G.~Gilardi, E.~Rocca, and J.~Sprekels.
\newblock Asymptotic analyses and error estimates for a {Cahn}-{Hilliard} type phase field system modelling tumor growth.
\newblock {\em Discrete \& Continuous Dynamical Systems-Series S}, 10(1), 2017.

\bibitem{cristini_lowengrub}
V.~Cristini and J.~Lowengrub.
\newblock {\em Multiscale {Modeling} of {Cancer} an {Integrated} {Experimental} and {Mathematical} {Modeling} {Approach}}.
\newblock Cambridge University Press, 2010.

\bibitem{dautray1999mathematical}
R.~Dautray and J.-L. Lions.
\newblock {\em {Mathematical} {Analysis} and {Numerical} {Methods} for {Science} and {Technology}: {Evolution} {Problems} I}, volume~5.
\newblock Springer, 1999.

\bibitem{dehghan2017comparison}
M.~Dehghan and V.~Mohammadi.
\newblock Comparison between two meshless methods based on collocation technique for the numerical solution of four-species tumor growth model.
\newblock {\em Communications in Nonlinear Science and Numerical Simulation}, 44:204--219, 2017.

\bibitem{di_pietro_ern_2012}
D.~A. Di~Pietro and A.~Ern.
\newblock {\em Mathematical {Aspects} of {Discontinuous} {Galerkin} {Methods}}.
\newblock Springer, 2012.

\bibitem{Ern-Guermond:04}
A.~Ern and J.-L. Guermond.
\newblock {\em Theory and {Practice} of {Finite} {Elements}}.
\newblock Springer, 2004.

\bibitem{eyre_1998_unconditionally}
D.~J. Eyre.
\newblock An unconditionally stable one-step scheme for gradient systems.
\newblock {\em Unpublished article}, 1997.
\newblock \url{https://citeseerx.ist.psu.edu/document?repid=rep1&type=pdf&doi=4f016a98fe25bfc06b9bcab3d85eeaa47d3ad3ca}.

\bibitem{fernandez2022glioblastoma}
A.~Fern{\'a}ndez-Romero, F.~Guill{\'e}n-Gonz{\'a}lez, and A.~Su{\'a}rez.
\newblock A {Glioblastoma} {PDE}--{ODE} model including chemotaxis and vasculature.
\newblock {\em ESAIM: Mathematical Modelling and Numerical Analysis}, 56(2):407--431, 2022.

\bibitem{frieboes}
H.~Frieboes, F.~Jin, Y.-L. Chuang, S.~Wise, J.~Lowengrub, and V.~Cristini.
\newblock Three-dimensional multispecies nonlinear tumor growth-{II}: {Tumor} invasion and angiogenesis.
\newblock {\em Journal of Theoretical Biology}, 264(4):1254--1278, 2010.

\bibitem{frigeri2015diffuse}
S.~Frigeri, M.~Grasselli, and E.~Rocca.
\newblock On a diffuse interface model of tumour growth.
\newblock {\em European Journal of Applied Mathematics}, 26(2):215--243, 2015.

\bibitem{fritz2023tumor}
M.~Fritz.
\newblock Tumor evolution models of phase-field type with nonlocal effects and angiogenesis.
\newblock {\em arXiv preprint arXiv:2303.10968}, 2023.

\bibitem{garcke2022viscoelastic}
H.~Garcke, B.~Kov{\'a}cs, and D.~Trautwein.
\newblock Viscoelastic {Cahn}--{Hilliard} models for tumour growth.
\newblock {\em arXiv preprint arXiv:2204.04147}, 2022.

\bibitem{garcke2020long}
H.~Garcke and S.~Yayla.
\newblock Long-time dynamics for a {Cahn}--{Hilliard} tumor growth model with chemotaxis.
\newblock {\em Zeitschrift f{\"u}r angewandte Mathematik und Physik}, 71(4):123, 2020.

\bibitem{GUILLENGONZALEZ2014821}
F.~Guill\'en-Gonz\'alez and G.~Tierra.
\newblock Second order schemes and time-step adaptivity for {Allen}--{Cahn} and {Cahn}--{Hilliard} models.
\newblock {\em Computers \& Mathematics with Applications}, 68(8):821--846, 2014.

\bibitem{guillen-gonzalez_linear_2013}
F.~Guillén-González and G.~Tierra.
\newblock On linear schemes for a {Cahn}--{Hilliard} diffuse interface model.
\newblock {\em Journal of Computational Physics}, 234:140--171, Feb. 2013.

\bibitem{hawkins-daarud_numerical_2012}
A.~Hawkins-Daarud, K.~G. van~der Zee, and J.~Tinsley~Oden.
\newblock Numerical simulation of a thermodynamically consistent four-species tumor growth model.
\newblock {\em International Journal for Numerical Methods in Biomedical Engineering}, 28(1):3--24, Jan. 2012.

\bibitem{huang2023structurepreserving}
Q.-A. Huang, W.~Jiang, J.~Z. Yang, and C.~Yuan.
\newblock A structure-preserving, upwind-{SAV} scheme for the degenerate {Cahn}--{Hilliard} equation with applications to simulating surface diffusion.
\newblock {\em arXiv preprint arXiv:2210.16017}, 2023.

\bibitem{mazen_saad_2014}
M.~Ibrahim and M.~Saad.
\newblock On the efficacy of a control volume finite element method for the capture of patterns for a volume-filling chemotaxis model.
\newblock {\em Computers \& Mathematics with Applications}, 68(9):1032--1051, Nov. 2014.

\bibitem{lowengrub_et_al}
{Lowengrub et al.}
\newblock Nonlinear modelling of cancer: bridging the gap between cells and tumours.
\newblock {\em Nonlinearity}, 23(1):R1--R91, 2010.

\bibitem{mcdougall}
S.~McDougall, A.~Anderson, and M.~Chaplain.
\newblock Mathematical modelling of dynamic adaptive tumour-induced angiogenesis: {Clinical} implications and therapeutic targeting strategies.
\newblock {\em Journal of Theoretical Biology}, 241(3):564--589, 2006.

\bibitem{miranville2019cahn}
A.~Miranville.
\newblock {\em The {Cahn}--{Hilliard} equation: recent advances and applications}.
\newblock SIAM, 2019.

\bibitem{mohammadi2019simulation}
V.~Mohammadi and M.~Dehghan.
\newblock Simulation of the phase field {Cahn}--{Hilliard} and tumor growth models via a numerical scheme: element-free {Galerkin} method.
\newblock {\em Computer Methods in Applied Mechanics and Engineering}, 345:919--950, 2019.

\bibitem{novick_cohen_cahn_hilliard}
A.~Novick-Cohen.
\newblock Chapter 4 {The} {Cahn}--{Hilliard} {Equation}.
\newblock In C.~M. Dafermos and M.~Pokorny, editors, {\em Handbook of Differential Equations: Evolutionary Equations}, volume~4, pages 201--228. Elsevier, 2008.

\bibitem{roose2007mathematical}
T.~Roose, S.~J. Chapman, and P.~K. Maini.
\newblock Mathematical models of avascular tumor growth.
\newblock {\em SIAM review}, 49(2):179--208, 2007.

\bibitem{BasixJoss}
M.~W. Scroggs, I.~A. Baratta, C.~N. Richardson, and G.~N. Wells.
\newblock Basix: a runtime finite element basis evaluation library.
\newblock {\em Journal of Open Source Software}, 7(73):3982, 2022.

\bibitem{ScroggsEtal2022}
M.~W. Scroggs, J.~S. Dokken, C.~N. Richardson, and G.~N. Wells.
\newblock Construction of {Arbitrary} {Order} {Finite} {Element} {Degree}-of-{Freedom} {Maps} on {Polygonal} and {Polyhedral} {Cell} {Meshes}.
\newblock {\em {ACM} Transactions on Mathematical Software}, 48(2), May 2022.

\bibitem{shen2018scalar}
J.~Shen, J.~Xu, and J.~Yang.
\newblock The scalar auxiliary variable ({SAV}) approach for gradient flows.
\newblock {\em Journal of Computational Physics}, 353:407--416, 2018.

\bibitem{shen2022sav}
X.~Shen, L.~Wu, J.~Wen, and J.~Zhang.
\newblock {SAV} {Fourier}-spectral method for diffuse-interface tumor-growth model.
\newblock {\em Computers \& Mathematics with Applications}, 2022.

\bibitem{signoriunderstanding}
A.~Signori, P.~Colli, M.~Garavello, and T.~Weigel.
\newblock {\em Understanding the {Evolution} of {Tumours}, a {Phase-field} {Approach}: {Analytic} {Results} and {Optimal} {Control}}.
\newblock PhD thesis, University of Milano Bicocca, 2020.
\newblock \url{https://signori.faculty.polimi.it/files/PhDthesis.pdf}.

\bibitem{sullivan2019pyvista}
C.~B. Sullivan and A.~Kaszynski.
\newblock {PyVista}: {3D} plotting and mesh analysis through a streamlined interface for the {Visualization} {Toolkit} ({VTK}).
\newblock {\em Journal of Open Source Software}, 4(37):1450, May 2019.

\bibitem{tierra2015numerical}
G.~Tierra and F.~Guill{\'e}n-Gonz{\'a}lez.
\newblock Numerical methods for solving the {Cahn}--{Hilliard} equation and its applicability to related energy-based models.
\newblock {\em Archives of Computational Methods in Engineering}, 22:269--289, 2015.

\bibitem{wise}
S.~Wise, J.~Lowengrub, H.~Frieboes, and V.~Cristini.
\newblock Three-dimensional multispecies nonlinear tumor growth-{I}.
\newblock {\em Journal of Theoretical Biology}, 253(3):524--543, 2008.

\bibitem{wu_stabilized_2014}
X.~Wu, G.~J. van Zwieten, and K.~G. van~der Zee.
\newblock Stabilized second-order convex splitting schemes for {Cahn}-{Hilliard} models with application to diffuse-interface tumor-growth models.
\newblock {\em International Journal for Numerical Methods in Biomedical Engineering}, 30(2):180--203, Feb. 2014.

\bibitem{yang2016linear}
X.~Yang.
\newblock Linear, first and second-order, unconditionally energy stable numerical schemes for the phase field model of homopolymer blends.
\newblock {\em Journal of Computational Physics}, 327:294--316, 2016.

\bibitem{yang2017efficient}
X.~Yang and L.~Ju.
\newblock Efficient linear schemes with unconditional energy stability for the phase field elastic bending energy model.
\newblock {\em Computer Methods in Applied Mechanics and Engineering}, 315:691--712, 2017.

\bibitem{yang2017numerical}
X.~Yang, J.~Zhao, and Q.~Wang.
\newblock Numerical approximations for the molecular beam epitaxial growth model based on the invariant energy quadratization method.
\newblock {\em Journal of Computational Physics}, 333:104--127, 2017.

\end{thebibliography}

\end{document}